\newcommand{\prob}{\ensuremath{\mathsf{P}}}
\newcommand{\beq}{\begin{equation}}
\newcommand{\media}{\ensuremath{\mathsf{E}}}
\newcommand{\eeq}{\end{equation}}
\newcommand{\reali}{\ensuremath{\mathbb{R}}}
\newcommand{\cI}{\mathcal{I}}
\newcommand{\cC}{\mathcal{C}}
\newcommand{\cS}{\mathcal{S}}
\newcommand{\cF}{\mathcal{F}}
\newcommand{\cL}{\mathcal{L}}
\newcommand{\cO}{\mathcal{O}}
\newcommand{\pprob}{\widetilde{\mathsf{P}}}
\newcommand{\mmedia}{\widetilde{\mathsf{E}}}
\newcommand{\eps}{\varepsilon}
\def\namedlabel#1#2{\begingroup
   \def\@currentlabel{#2}
   \label{#1}\endgroup}
\DeclareMathOperator*{\esssup}{ess\,sup}
\newtheorem{theorem}{Theorem}[section]
\newtheorem{lemma}[theorem]{Lemma}
\newtheorem{corollary}[theorem]{Corollary}
\newtheorem{proposition}[theorem]{Proposition}
\newtheorem{remark}[theorem]{Remark}
\newtheorem{assumption}[theorem]{Assumption}
\begin{document}
\title{\textbf{On Lipschitz continuous optimal stopping boundaries}}
\author{Tiziano De Angelis\thanks{School of Mathematics, University of Leeds, Woodhouse Lane LS2 9JT, Leeds, United Kingdom. \texttt{t.deangelis@leeds.ac.uk}}\qquad \&\qquad Gabriele Stabile\thanks{Dipartimento di Metodi e Modelli per l'Economia, il Territorio e la Finanza, Sapienza University of Rome, Via del Castro Laurenziano 9, 00161, Rome, Italy. \texttt{gabriele.stabile@uniroma1.it}}
}
\date{\today}
\maketitle

\textbf{Abstract.} We obtain a probabilistic proof of the local Lipschitz continuity for the optimal stopping boundary of a class of problems with state space $[0,T]\times\reali^d$, $d\ge 1$. To the best of our knowledge this is the only existing proof that relies exclusively upon stochastic calculus, all the other proofs making use of PDE techniques and integral equations. Thanks to our approach we obtain our result for a class of diffusions whose associated second order differential operator is not necessarily uniformly elliptic. The latter condition is normally assumed in the related PDE literature.

\medskip

{\textbf{Keywords}}: optimal stopping, free boundary problems, Lipschitz free boundaries.

\smallskip

{\textbf{MSC2010 subject classification}}: 60G40, 35R35.

\section{Introduction}

In this work we deal with optimal stopping problems of the form
\begin{align}\label{V0}
v(t,x)=\sup_{0\le\tau\le T-t}\media\left[\int_0^\tau h(t+s,X^x_s)ds+\mathds{1}_{\{\tau<T-t\}}f(t+\tau,X^x_\tau)+\mathds{1}_{\{\tau=T-t\}}g(X^x_{\tau})\right]
\end{align}
where $\media$ denotes the expectation operator. For $d\ge 1$ and $d'\ge1$, given a suitable $\reali^d$-valued function $\mu$ and a $d\times d'$ matrix $\sigma$, the process $X\in\reali^d$ follows the dynamic 
\begin{align*}
X^{x}_t=x+\int_0^t\mu(X^{x}_s)ds+\sigma B_t,\quad t\ge 0,
\end{align*}
with $B$ a $\reali^{d'}$-valued Brownian motion. The main focus of our study is the analysis of the regularity of the optimal stopping boundary, i.e.~the boundary of the set in $[0,T)\times\reali^d$ where $v=f$. 

Under mild assumptions on $\mu$, $f$, $g$ and $h$ we provide a probabilistic representation of the gradient of $v$. The latter is used, along with more technical requirements on $f$, $g$ and $h$, to prove that the optimal stopping boundary may be expressed in terms of a locally Lipschitz continuous function $b:[0,T]\times\reali^{d-1}\to\reali$. One of the main features in our work is that we do not assume uniform non-degeneracy of the diffusion so that standard results based on PDE theory cannot be easily applied. 

It is well known that optimal stopping theory goes hand in hand with the theory of free boundary problems in PDE and the question of regularity of optimal stopping boundaries (free boundaries) has been the object of intensive study. The one dimensional case $d=1$ attracted the interest of several mathematicians who developed approaches ranging from probability to analysis.
Early contributions to the topic were made in \cite{Kot73}, \cite{McK65} and \cite{VM}, among others. In \cite{Kot73} and \cite{VM} it was proven that the free-boundary $b$ is differentiable in the open interval $(0,T)$ for a certain class of problems involving one-dimensional Brownian motion or solutions of one-dimensional SDEs with regular coefficents. Other papers employing PDE methods are for example \cite{CDH67}, \cite{Sch76}, where infinite differentiability of the free boundary in the Stefan problem is proved, and \cite{Fri75} where $C^1$ regularity of the boundary is obtained for a certain class of variational problems. The study of the optimal boundary of the American put option is perhaps one of the most renowned examples in this field and for an overview of existing results one may refer to \cite{BX09}, \cite{Bl06}, \cite{CC07}, \cite{CCJZ08}, \cite{Eks04}, \cite{J91}, \cite{Kim} \cite{LV03}, and \cite{Pe05} among others. Finally it is worth recalling that a thorough discussion of analytical methods for free boundary problems on $[0,T]\times\reali$ related to the heat operator may be found in the monograph \cite{Can} (see also \cite[Ch.~8]{Fri08}). In the latter, as well as in several of the above references, the first step in the analysis of the regularity of the free boundary is to prove that it is Lipschitz continuous or at least H\"older continuous with constant $\alpha>1/2$. 

There is also a large body of literature addressing similar questions in higher dimensions. Accounting in full for these results is a difficult task and it falls outside the reach of our work. However for our purposes it is interesting to recall the following fact: Lipschitz regularity for the free boundary of certain Stefan problems (with $d\ge 1$) can be upgraded to $C^{1,\alpha}$ regularity for some $\alpha\in(0,1)$ and eventually to $C^\infty$ regularity, under suitable technical conditions. Detailed derivations of this informal statement may be found in the monographs \cite{CS} and \cite{PSU} and references therein (see also \cite{LauSal09} for the study of American options written on several assets and with convex payoff). 

In the literature on optimal stopping the vast majority of papers studying problems of the form \eqref{V0} with $d=1$ addresses the question of continuity of the boundary without looking at higher regularity (of course with the exception of the works mentioned above; see \cite{DeA15} for some results and further references). Moreover, even the question of continuity becomes difficult to handle for $d>1$. In the case of $d=2$ and $T=\infty$, specific examples were addressed in \cite{DeAFF} and \cite{PJ}, while a more complete answer was recently provided in \cite{Pe17}. 

Notably, Shreve and Soner \cite{SS91b, SS91} address a problem of singular stochastic control which is equivalent to one of optimal stopping of the form \eqref{V0}, and characterise the optimal boundary as a real-valued, Lipschitz continuous function on $[0,T]\times\reali^{d-1}$, $d\ge 1$. In their work they employ the equivalence between the problem of singular control and the one of optimal stopping, and study the latter purely by means of PDE methods similar to those in \cite{Ben-Lio82}. Regularity of the free boundary is used in \cite{SS91b, SS91} to obtain a classical solution to a variational problem with gradient constraint related to the singular control problem. It is worth mentioning that the same authors had previously shown $C^2$ regularity for the optimal boundary of a two-dimensional singular control problem on an infinite time horizon \cite{SS89}. However in the setting of \cite{SS89} we are not aware of any direct link to an optimal stopping problem and therefore it is harder to draw a parallel with our work.  

From the above discussion we learn that a reasonable attempt towards the study of regularity for optimal boundaries in optimal stopping theory should start form establishing their Lipschitz continuity. Of course this can be achieved in several instances by the PDE methods illustrated in the references above but instead we aim at finding a fully probabilistic approach. Under assumptions similar to those adopted in \cite{SS91b, SS91}, our work not only serves the purpose of bridging the PDE literature and the probabilistic one but it also contributes new results.

One of our main contributions is to prove that for $d>1$ local Lipschitz continuity of the optimal boundary can be obtained without requiring uniform ellipticity of the operator $\sigma\sigma^\top$ (see Theorems \ref{thm:F} and \ref{thm:G}, and Example 2 in Section \ref{sec:examples}). Relaxing this requirement makes it difficult to apply standard PDE results (including \cite{CS} and \cite{PSU}) and the methods used in \cite{SS91b, SS91} are no longer valid. In the special case $d=1$ (see Theorem \ref{thm:lip-d1}) we are able to localize the assumptions made in \cite{SS91b, SS91} and in particular the one relative to the running cost, i.e.~our function $h$. 
Such relaxation allows us to apply our results to a wider class of examples than the one previously covered. For instance we can apply them in problems of irreversible capacity expansions where the running profit is expressed by a Cob-Douglas-type production function (see, e.g.~\cite{Ch-Haus09} and Example 1 in Section \ref{sec:examples}).
A more detailed comparison between our setting and the one in \cite{SS91b,SS91} is provided in Remark \ref{rem:comp}. 

We also notice that our functional \eqref{V0} allows a rather generic time-space dependence of the functions $f$, $g$ and $h$, while at the same time the dynamic of $X$ allows state dependent drifts and correlations between the driving noises (i.e.~$\sigma$ is not necessarily diagonal). For $d=1$ a generic time dependence of $f$ and $h$ makes it extremely hard and often impossible to establish monotonicity of the optimal boundary as a function of time. The latter is normally a key feature in the study of the boundary's continuity. One advantage of our approach is that instead we do not need such monotonicity to establish Lipschitz continuity (see~\cite{DeAS17} for a recent application in actuarial context). Moreover if the boundary is Lipschitz then $v\in C^1([0,T)\times\reali)$ (see Remark \ref{rem:C1}).

Our method consists of two main steps which we can formally summarise as follows. In the first step we find a probabilistic representation of the time/space derivatives of the value function. The latter is then used in the second step along with the implicit function theorem to obtain bounds on the gradient of the optimal boundary. Notice that, while the second step is somehow in line with ideas in \cite{SS91}, the first step is entirely new.

It is important to remark that despite the technical assumptions that we make, one of the main contributions of our work is the methodology. As it is often the case in optimal stopping and free boundary problems, in order to be able to give general results, one has to impose fairly strong conditions on the problem data. However, when considering specific examples it is possible to find ways around the technicalities and still apply the same methods. This is indeed true also for the theory that we are developing here and in Section \ref{sec:examples} we provide some examples of such extensions.

The rest of the paper is organised as follows. In Section \ref{sec:setup} we provide a rigorous formulation of the problem outlined in \eqref{V0} along with the standing assumptions. In Section \ref{sec:value} we obtain a probabilistic representation formula for the gradient $\nabla_x v$ and for bounds on the time derivative $\partial_t v$ (see Theorem \ref{thm:lip}). Some other technical estimates are performed before passing to Section \ref{sec:lip}. In the latter we finally give our main results regarding existence of a locally-Lipschitz continuous optimal boundary for problem \eqref{V0}. This result is given under three different sets of assumptions: in Theorem \ref{thm:lip-d1} for $d=1$ and in Theorems \ref{thm:F} and \ref{thm:G} for $d\ge 2$. In Section \ref{sec:examples} we show some applications of our results and their extensions in specific examples.

\section{Setup and problem formulation}\label{sec:setup}
Consider a complete probability space $(\Omega,\mathcal {F}, \prob)$ equipped with the natural filtration $\mathbb{F} := (\mathcal {F}_t)_{t \geq 0}$ generated by a $\reali^{d'}$-valued Brownian motion $(B_t)_{t \geq 0}$. Assume that $\mathbb{F}$ is completed with $\prob$-null sets and let $X\in\reali^d$ evolve according to 
\begin{align}\label{X}
X^{x}_t=x+\int_0^t\mu(X^{x}_s)ds+\sigma B_t,\quad t\ge 0,
\end{align}
where $\mu\in C^1(\reali^d;\reali^d)$ with sub-linear growth and $\sigma$ is a $d\times d'$ matrix. 
We denote by $\langle\cdot\,,\cdot\rangle$ the scalar product in $\reali^d$ and by $\|\cdot\|_d$ the Euclidean norm in $\reali^d$. Notice that $\sigma\sigma^\top$ is assumed to be non-negative but not necessarily uniformly elliptic. This means that it may exist $\xi\in\reali^d$ such that $\langle \sigma\sigma^\top \xi,\xi\rangle=0$.

Throughout the paper we will often use $\prob_{t,x}(\,\cdot\,)=\prob(\,\cdot\,|X_t=x)$ and $\prob_x=\prob_{0,x}$, so that $\media_{t,x} f(X_s)=\media f(X^{t,x}_s)$, $s\ge t$, for any function $f$ which is Borel-measurable and integrable.  With no loss of generality we will assume $\Omega= C([0,T];\reali^d)$ so that $t\mapsto \omega(t)$ is the canonical process and $\theta_\cdot$ the shifting operator such that $\theta_s \omega(t)=\omega(t+s)$.

For $T\in(0,+\infty)$ we consider optimal stopping problems of the form
\begin{align}\label{V}
v(t,x)=\sup_{0\le\tau\le T-t}\media\left[\int_0^\tau h(t+s,X^x_s)ds+\mathds{1}_{\{\tau<T-t\}}f(t+\tau,X^x_\tau)+\mathds{1}_{\{\tau=T-t\}}g(X^x_{\tau})\right]
\end{align}
where $f$, $g$ and $h$ are real-valued with $f\in C^{1,2}([0,T]\times\reali^d)$, $h\in C^{1,1}([0,T]\times\reali^d)$ and $g\in C^{2}(\reali^d)$. 
In the infinite horizon case, i.e.~$T=+\infty$, we consider
\begin{align}
v(t,x)=\sup_{\tau\ge 0}\media\left[\int_0^\tau h(t+s,X^x_s)ds+f(t+\tau,X^x_\tau)\right]
\end{align}
with $f$ and $h$ as above and, according to \cite[Ch.~3]{Shir}, we set
\begin{align*}
\mathds{1}_{\{\tau=+\infty\}}f(t+\tau,X^x_\tau):=\limsup_{s\to\infty}f(s,X^x_s),\quad\text{$\prob$-a.s.}
\end{align*}

In what follows conditions at $T$ for the terminal value $g(X_T)$ are understood to hold only for $T<+\infty$ and can always be neglected for $T=+\infty$. From now on we assume that for all $(t,x)\in[0,T]\times\reali^d$ it holds 
\begin{align}
\label{integr}\media\Big[\int_0^{T-t} |h(t+s,X^x_s)|ds+|g(X^x_{T-t})|+\sup_{0\le s\le T-t}|f(t+s,X^x_s)|\Big]<+\infty.
\end{align}
Moreover, if $T=+\infty$ then we also assume 
\begin{align}
\label{integr2}\limsup_{s\to\infty}f(t+s,X^x_s)=\lim_{s\to\infty}f(t+s,X^x_s)=0,\quad\text{$\prob$-a.s.}
\end{align}
Both assumptions are fulfilled in the examples of Section \ref{sec:examples}.
\begin{remark}
Notice that the dynamic \eqref{X} and the optimisation problem \eqref{V} are general enough to include for example models involving geometric Brownian motion and Ornstein-Uhlenbeck.  
\end{remark}

To avoid further technicalities we also assume that $v$ is a lower semi-continuous function. Often such regularity (or even continuity) is easy to check in specific examples (e.g., those in Section \ref{sec:examples}). There also exist mild sufficient conditions that guarantee lower semi-continuity of $v$ in more general settings (see for instance \cite[Ch.~3]{Shir}. See also Remark 2.10 and eq.~(2.2.80) in \cite[Ch.I, Sec.~2]{PS}). 

The continuation set $\cC$ and the stopping set $\cS$ are given by
\begin{align}
&\cC:=\{(t,x)\in[0,T)\times\reali^d\,:\,v(t,x)>f(t,x)\}\\
&\cS:=\{(t,x)\in[0,T)\times\reali^d\,:\,v(t,x)=f(t,x)\}\cup(\{T\}\times\reali).
\end{align}
From standard optimal stopping theory we know that, in our setting, \eqref{integr} and lower semi-continuity of $v$ are sufficient for the optimality of
\begin{align}
\label{entry}
\tau_*(t,x)= \inf \left \{ s \in [0,T-t] \! :  \! (t+s,X_s^x) \in \mathcal{S} \right \}
\end{align}
provided that $f(T,x)\le g(x)$, if $T<+\infty$ (see \cite[Ch.~I, Sec.~2, Cor.~2.9]{PS}). For the infinite horizon case notice that if $\prob_{t,x}(\tau_*<+\infty)<1$, then there is no optimal stopping time and $\tau_*$ is a (optimal) Markov time (according to the terminology in \cite[Ch.~3, Thm.~3]{Shir}). However methods used in the next sections work for both finite and infinite values of $\tau_*$ thanks to \eqref{integr2}. 

For arbitrary $(t,x)\in[0,T]\times\reali^d$ let
\begin{align*}
Y_s:= v(t+s,X^x_s)+\int_0^s h(t+u,X^x_u)du.
\end{align*}
Since $v$ is lower semi-continuous and using standard results in optimal stopping (see \cite[Ch.~I, Sec.~2, Thm.~2.4]{PS}) we have that $(Y_{s})_{0\le s\le T-t}$ is $\prob$-a.s.~right-continuous and 
\begin{align}
\label{supmg}&Y_s\quad\text{is a supermartingale for $s\in[0,T-t]$,}\\
\label{mg}&Y_{s\wedge\tau_*}\quad\text{is a martingale for $s\in[0,T-t]$.}
\end{align}
Notice in particular that since $Y$ is right-continuous then the process $s\mapsto v(t+s,X^x_s)$ is $\prob$-a.s.~right continuous as well. 
As a note of caution we remark that if $T=+\infty$ then \eqref{supmg} continues to hold on $[0,+\infty]$ because $Y$ is a uniformly integrable super-martingale thanks to \eqref{integr} (see, e.g.~\cite[Thm.~9, Ch.~1]{Shir}). Instead, \eqref{mg} only holds on $[0,\infty)$.

We denote by $\cL$ the infinitesimal generator associated to $X$ and in particular we have
\begin{align}\label{def:L}
\cL F(x)=\frac{1}{2}\sum^d_{i,j=1}(\sigma\sigma^\top)_{i,j}\frac{\partial^2F}{\partial x_i\partial x_j}(x)+\sum^d_{i=1}\mu_i(x)\frac{\partial F}{\partial x_i}(x),\qquad F\in C^2(\reali^d;\reali).
\end{align}
For future frequent use we also introduce the following notation
\begin{align}
\label{def:m} m(t,x):=\left(\partial_tf+\cL f\right)(t,x)\quad\text{and}\quad n(x):=\cL g(x).
\end{align}

Since $\mu\in C^1(\reali^d;\reali^d)$ then the flow $x\mapsto X^x$ is differentiable (\cite{Pr}, Chapter V.7). 
Here we denote the initial point in \eqref{X} by $x=(x_1,\ldots,x_d)$, the $i$-th component of $X^x$ by $X^{x,i}$, the partial derivative with respect to $x_k$ by $\partial_k=\tfrac{\partial}{\partial x_k}$, and the derivative of $X^x$ with respect to the initial point $x_k$ by $\partial_k X^x=(\partial_k X^{x,1},\ldots \partial_k X^{x,d})$. We define the process $\partial X^x$ as a $d\times d$ matrix with entries $\partial_k X^{x,j}$ for $j,k=1,\ldots d$ and the maps $t\mapsto\partial_k X^{x,j}_t$ are $\prob$-a.s.~continuous with dynamics given by 
\begin{align}\label{DX}
\partial_k X^{x,j}_t=&\delta_{j,k}+\int_0^t\sum^d_{\ell=1}\partial_{\ell}\mu_j(X^x_s)\partial_kX^{x,\ell}_s ds=\delta_{j,k}+\int_0^t\langle \nabla_x\mu_j(X^x_s),\partial_kX^{x}_s\rangle ds.
\end{align}
In what follows we also assume that for any compact $K\subset\reali^d$ it holds
\begin{align}\label{grad-L2}
\sup_{x\in K}\media\left[\sup_{0\le t\le T}\|\partial_k X^x_t\|_d^2\right]<+\infty \quad\text{for all $k=1,\ldots d$}.
\end{align}

The next will be a standing assumption throughout the paper
\begin{assumption}[\textbf{Regularity $f,g,h$}.]\label{ass:fgh}
For any compact $K\subset[0,T]\times\reali^d$, there exists $c_K>0$ such that for all $(t,x)\in K$ we have
\begin{align*}
&\media\left[\int_0^{T-t} \|\nabla_x h(t+s,X^x_s)\|_d^2\, ds+\sup_{0\le s\le T-t}\|\nabla_x f(t+s,X^x_s)\|_d^2+\|\nabla_x g(X^x_{T-t})\|_d^2\right]\le c_K,\\[+5pt]
&\media\left[\int_0^{T-t} \!\! |\partial_t h(t+s,X^x_s)|^2 ds +\sup_{0\le s\le T-t}|\partial_t f(t+s,X^x_s)|^2+ |h(T,X^x_{T-t})+n(X^x_{T-t})|^2\right]\le c_K.
\end{align*}
\end{assumption}
\vspace{+4pt}

\noindent It is important to remark that Assumption \ref{ass:fgh} and \eqref{grad-L2} are used in Theorem \ref{thm:lip} in order to pass limits under expectations, thanks to uniform integrability. Therefore, in specific examples one can expect to find weaker sufficient conditions that allow this step in the proof.
 
\section{Properties of the value function}\label{sec:value}

In this section we provide useful bounds for the gradient of the value function $v$ and some other technical results. These are obtained by often using the following condition
\begin{description}
\item [(A$_1$) Terminal value.\namedlabel{A$_1$}{(A$_1$)}] If $T<+\infty$ we have $g(x)\ge f(T,x)$.
\end{description}
Before stating the next theorem it is useful to introduce the functions
\begin{align}\label{upvt}
\overline{v}(t,x)= \media\Big[&\int_0^{\tau_*}\partial_t h(t+s,X^x_s)ds\\
&+\mathds{1}_{\{\tau_*<T-t\}} \partial_t f(t+\tau_*,X^x_{\tau_*})-\mathds{1}_{\{\tau_*=T-t\}}\left(h(T,X^x_{T-t})+n(X^x_{T-t})\right)\Big]\nonumber
\end{align}
and
\begin{align}\label{downvt}
\underline{v}(t,x)= \media\Big[&\int_0^{\tau_*}\partial_t h(t+s,X^x_s)ds+\mathds{1}_{\{\tau_*<T-t\}} \partial_t f(t+\tau_*,X^x_{\tau_*})\\
&-\mathds{1}_{\{\tau_*=T-t\}}\Big(|h(T,X^x_{T-t})+n(X^x_{T-t})|+|\partial_t f(T,X^{x}_{T-t})|\Big)\Big].\nonumber
\end{align}

\begin{theorem}\label{thm:lip}
Assume condition \ref{A$_1$}. Then the value function $v$ is locally Lipschitz continuous on $[0,T]\times\reali^d$ and for a.e.~$(t,x)$ we have
\begin{align}\label{grad-v}
\hspace{-10pt}
\partial_k v(t,x)=&\media\Big[\int_0^{\tau_*}\langle\nabla_x h(t+s,X^x_s),\partial_k X^x_s\rangle ds\\
&\hspace{+5pt}+\mathds{1}_{\{\tau_*<T-t\}}\langle \nabla_x f(t+\tau_*,X^x_{\tau_*}),\partial_k X^x_{\tau_*}\rangle+\mathds{1}_{\{\tau_*=T-t\}}\langle \nabla_x g(X^x_{T-t}),\partial_k X^x_{T-t}\rangle\Big]\nonumber
\end{align}
and 
\begin{align}\label{vt}
\underline{v}(t,x)\le \partial_t v(t,x)\le \overline{v}(t,x).
\end{align}
\end{theorem}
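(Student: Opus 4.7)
The plan is a two-sided perturbation argument, carried out separately in the space and time variables of $v$, with the incremental ratios identified via the flow differentiability \eqref{DX} and dominated convergence; local Lipschitz continuity then follows from the uniform $L^1$-bounds on the limit expressions. For the spatial derivative \eqref{grad-v}, fix $(t,x)\in[0,T]\times\reali^d$, a unit vector $e_k$ and small $\epsilon\neq 0$, and let $\tau_*=\tau_*(t,x)$ and $\tau_\epsilon=\tau_*(t,x+\epsilon e_k)$. Writing the criterion in \eqref{V} as $J(t,x;\tau)$, the two sub-optimality inequalities $v(t,x+\epsilon e_k)\ge J(t,x+\epsilon e_k;\tau_*)$ and $v(t,x)\ge J(t,x;\tau_\epsilon)$ sandwich $(v(t,x+\epsilon e_k)-v(t,x))/\epsilon$ between two quantities that differ only through the initial condition of $X^x$. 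The mean-value theorem applied pathwise together with the flow differentiability \eqref{DX} identifies the $\epsilon\to 0$ limit of both sides as the right-hand side of \eqref{grad-v}; dominated convergence is justified by Cauchy--Schwarz combined with Assumption \ref{ass:fgh} and \eqref{grad-L2}. The indicator $\mathds{1}_{\{\tau_*=T-t\}}$ in \eqref{grad-v} survives because $\tau_*$ does not depend on the spatial perturbation.

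For the time bounds \eqref{vt}, I perturb $t\mapsto t+\epsilon$ and set $\tau^\epsilon:=\tau_*\wedge(T-t-\epsilon)$ (admissible at $(t+\epsilon,x)$). The analogous sub-optimality argument reproduces the interior terms $\int_0^{\tau_*}\partial_t h\,ds$ and $\mathds{1}_{\{\tau_*<T-t\}}\partial_t f(t+\tau_*,X^x_{\tau_*})$ common to $\overline v$ and $\underline v$. The asymmetric boundary contribution originates on $\{\tau_*=T-t\}$, where the shrinking horizon forces stopping at $T-t-\epsilon$ instead of $T-t$ and the increment of the criterion decomposes, modulo lower-order interior pieces, as
\begin{align*}
-\int_{T-t-\epsilon}^{T-t}\!h(t+s,X^x_s)\,ds+\bigl(g(X^x_{T-t-\epsilon})-g(X^x_{T-t})\bigr)+\bigl(g(X^x_{T-t-\epsilon})-f(T-\epsilon,X^x_{T-t-\epsilon})\bigr).
\end{align*}
A first-order expansion of the missing running integral yields $-\epsilon\,h(T,X^x_{T-t})$, and It\^o's formula applied to $g$ on $[T-t-\epsilon,T-t]$ gives $\media[g(X^x_{T-t})-g(X^x_{T-t-\epsilon})\mid\cF_{T-t-\epsilon}]\sim\epsilon\,n(X^x_{T-t})$, so that jointly they produce the correction $-(h(T,\cdot)+n(\cdot))$ appearing in $\overline{v}$. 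The looser lower bound $\underline{v}$ arises when the third summand above must be controlled in the opposite direction: the identity $g-f(T-\epsilon,\cdot)=[g-f(T,\cdot)]+\epsilon\,\partial_t f(T,\cdot)+o(\epsilon)$ combined with \ref{A} controls the sign of the first bracket but forces the second to be absorbed in absolute value, explaining the $|\partial_t f(T,\cdot)|$ slack that separates $\underline v$ from $\overline v$.

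With the derivative representations and the one-sided bounds in hand, Cauchy--Schwarz combined with Assumption \ref{ass:fgh} and \eqref{grad-L2} makes the right-hand sides of \eqref{grad-v} and of the bounds for $\partial_t v$ uniformly bounded on compact subsets of $[0,T]\times\reali^d$. Therefore $v$ is locally Lipschitz in each variable; a standard integration argument upgrades this to joint local Lipschitz continuity on $[0,T]\times\reali^d$, and Rademacher's theorem ensures that the derivatives exist a.e.\ and coincide with the limits above. The principal obstacle I anticipate lies in the time-derivative step: rigorously justifying the boundary contribution on $\{\tau_*=T-t\}$ requires handling the martingale part of the It\^o expansion of $g(X^x_{T-t})-g(X^x_{T-t-\epsilon})$ against the indicator $\mathds{1}_{\{\tau_*=T-t\}}$, a set that is only $\cF_{T-t}$-measurable; an approximation by $\cF_{T-t-\epsilon}$-adapted events will be required and should be driven by the right-continuity of $s\mapsto v(t+s,X^x_s)$ noted after \eqref{mg}. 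A secondary technicality is to show that the transitional event $\{T-t-\epsilon\le\tau_*<T-t\}$ contributes only $o(\epsilon)$ to the increment, which will exploit the regularity of $f$, $g$ and the lower semi-continuity of $v$.
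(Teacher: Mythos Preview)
Your spatial-derivative argument is essentially the paper's Step~1, with one imprecision worth noting: the $\tau_\epsilon$-side of your sandwich does \emph{not} converge to the formula \eqref{grad-v}, because $\tau_\epsilon$ varies with $\epsilon$ and no continuity of $\epsilon\mapsto\tau_\epsilon$ is available. The paper uses the $\tau_\epsilon$-bound only to obtain local Lipschitz continuity; the matching upper bound on $\partial_k v$ comes from a second use of the $\tau_*$-inequality with the left-perturbation $x-\delta e_k$ (your $\epsilon<0$ case), so that at points of differentiability the liminf from the right and limsup from the left both equal the right-hand side of \eqref{grad-v}.

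The time-derivative step has a genuine gap, precisely the measurability obstacle you flag but do not resolve. First, your displayed three-term decomposition on $\{\tau_*=T-t\}$ is inconsistent: the first two summands arise from the lower-bound comparison $J(t+\epsilon,x;\tau_*\wedge(T-t-\epsilon))-J(t,x;\tau_*)$, but the third term $g-f(T-\epsilon,\cdot)$ does not appear there (it would belong to the opposite comparison, via $\tau_*(t+\epsilon,x)$ on its own terminal event). Second, the It\^o expansion of $g(X^x_{T-t})-g(X^x_{T-t-\epsilon})$ against $\mathds{1}_{\{\tau_*=T-t\}}\in\cF_{T-t}$ cannot be repaired by an $\cF_{T-t-\epsilon}$-approximation without extra structure.

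The paper avoids this entirely by exploiting the (super)martingale properties \eqref{supmg}--\eqref{mg} of $Y_s=v(t+s,X^x_s)+\int_0^s h$, instead of comparing $J$-values directly. For the upper bound one has
\[
v(t,x)-v(t-\epsilon,x)\le\media\Big[\int_0^{\tau_*}\big(h(t+s,\cdot)-h(t-\epsilon+s,\cdot)\big)\,ds+v(t+\tau_*,X^x_{\tau_*})-v(t-\epsilon+\tau_*,X^x_{\tau_*})\Big],
\]
then uses $v=f$ on $\cS$ and $v\ge f$; on $\{\tau_*=T-t\}$ the term $v(T-\epsilon,X^x_{T-t})$ is bounded below by choosing $\sigma\equiv\epsilon$ in its own stopping problem and applying It\^o to $g$ \emph{forward} from $X^x_{T-t}$, where the indicator is already measurable. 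For the lower bound the paper does use $\tau_*\wedge(T-t-\epsilon)$ as you propose, but on $\{\tau_*>T-t-\epsilon\}\in\cF_{T-t-\epsilon}$ it invokes the martingale identity \eqref{mg} to replace $v(t+\tau_*,X^x_{\tau_*})+\int_{T-t-\epsilon}^{\tau_*}h$ by $v(T-\epsilon,X^x_{T-t-\epsilon})$; the analysis then reduces to bounding the short-horizon value $v(T-\epsilon,\cdot)$ from above (this is \eqref{eq:gab0}), and condition \ref{A} enters there via $f(T,\cdot)\le g$. No It\^o expansion against an $\cF_{T-t}$-event is ever needed.
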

\begin{proof}
\emph{Step 1. (Spatial derivative).} Here we show that $v(t,\cdot)$ is locally Lipschitz and \eqref{grad-v} holds for a.e.~$x\in\reali^d$ and each given $t\in[0,T]$ (notice that the null set where $v(t,\cdot)$ is not differentiable may a priori depend on $t$). First we obtain bounds for the left and right derivative of $v(t,\cdot)$.

Fix $(t,x)\in[0,T]\times\reali^d$ and take $\eps>0$. For an arbitrary $k$ we denote for simplicity $x_\eps=(x_1,\ldots x_k+\eps,\ldots x_d)$, and consider the processes $X^{x_\eps}=(X^{x_\eps,1},\ldots \ldots X^{x_\eps,d})$ and $X^x=(X^{x,1},\ldots X^{x,d})$. We remark that all components of the vector process $X^{x_\eps}$ are affected by the shift in the initial point.

We denote by $\tau=\tau_*(t,x)$ the optimal stopping time (independent of $\eps$) for the problem with initial data $(t,x)$. Using such optimality we first obtain
\begin{align*}
v(t,x_\eps)&-v(t,x)\\
\ge& \media\left[\int_0^\tau\left(h(t+s,X^{x_\eps}_s)-h(t+s,X^{x}_s)\right)ds+\mathds{1}_{\{\tau<T-t\}}\left(f(t+\tau,X^{x_\eps}_\tau)-f(t+\tau,X^{x}_\tau)\right)\right]\\
&+\media\left[\mathds{1}_{\{\tau=T-t\}}\left(g(X^{x_\eps}_{T-t})-g(X^x_{T-t})\right)\right].
\end{align*}
Dividing both sides of the above expression by $\eps$ and recalling Assumption \ref{ass:fgh} and \eqref{grad-L2} we can pass to the limit as $\eps\to0$ and use dominated convergence to conclude that
\begin{align}\label{grad-1}
\liminf_{\eps\to0}&\frac{v(t,x_\eps)-v(t,x)}{\eps}\nonumber\\
\ge& \media\left[\int_0^\tau\langle\nabla_x h(t+s,X^x_s),\partial_k X^x_s\rangle ds+\mathds{1}_{\{\tau<T-t\}}\langle \nabla_x f(t+\tau,X^x_\tau),\partial_k X^x_\tau \rangle\right]\\
&+\media\left[\mathds{1}_{\{\tau=T-t\}}\langle \nabla_x g(X^x_{T-t}),\partial_k X^x_{T-t}\rangle\right].\nonumber
\end{align}

To obtain a reverse inequality, pick $\delta>0$ and denote $x_\delta=(x_1,\ldots x_k-\delta,\ldots x_d)$ and $X^{x_\delta}=(X^{x_\delta,1},\ldots X^{x_\delta,d})$. Since $\tau$ is optimal in $v(t,x)$ and sub-optimal in $v(t,x_\delta)$ we have
\begin{align*}
v(t,x)&-v(t,x_\delta)\\
\le& \media\left[\int_0^\tau\left(h(t+s,X^{x}_s)-h(t+s,X^{x_\delta}_s)\right)ds+\mathds{1}_{\{\tau<T-t\}}\left(f(t+\tau,X^{x}_\tau)-f(t+\tau,X^{x_\delta}_\tau)\right)\right]\\
&+\media\left[\mathds{1}_{\{\tau=T-t\}}\left(g(X^{x}_{T-t})-g(X^{x_\delta}_{T-t})\right)\right].
\end{align*}
Dividing both sides by $\delta$, taking limits and using dominated convergence again we obtain
\begin{align}\label{grad-2}
\limsup_{\delta\to0}&\frac{v(t,x)-v(t,x_\delta)}{\delta}\nonumber\\
\le& \media\left[\int_0^\tau\langle\nabla_x h(t+s,X^x_s),\partial_k X^x_s\rangle ds+\mathds{1}_{\{\tau<T-t\}}\langle \nabla_x f(t+\tau,X^x_\tau),\partial_k X^x_\tau \rangle\right]\\
&+\media\left[\mathds{1}_{\{\tau=T-t\}}\langle \nabla_x g(X^x_{T-t}),\partial_k X^x_{T-t}\rangle\right].\nonumber
\end{align}

Now, \eqref{grad-1} gives a lower bound for the right derivative with respect to $x_k$ whereas \eqref{grad-2} provides an upper bound for the corresponding left derivative. If $x$ is a point of differentiability of $v(t,\cdot)$ then \eqref{grad-1} and \eqref{grad-2} imply that \eqref{grad-v} holds at that point. It remains to show that $v(t,\cdot)$ is locally Lipschitz so that a.e.~$x\in\reali^d$ is a point of differentiability.

With the same notation as above let $\tau_\eps=\tau_*(t,x_\eps)$ be optimal for the problem with initial data $(t,x_\eps)$. By analogous arguments to those used previously and using Assumption \ref{ass:fgh} and \eqref{grad-L2} we find
\begin{align}\label{lipx}
v&(t,x_\eps)-v(t,x)\nonumber\\
\le& \media\left[\int_0^{\tau_\eps}\left(h(t+s,X^{x_\eps}_s)-h(t+s,X^{x}_s)\right)ds+\mathds{1}_{\{\tau_\eps<T-t\}}\left(f(t+\tau_\eps,X^{x_\eps}_{\tau_\eps})-f(t+\tau_\eps,X^{x}_{\tau_\eps})\right)\right]\nonumber\\
&+\media\left[\mathds{1}_{\{\tau_\eps=T-t\}}\left(g(X^{x_\eps}_{T-t})-g(X^x_{T-t})\right)\right]\le c(t,x)\eps,
\end{align}
for some $c(t,x)>0$, which is uniform for $(t,x)$ on a compact. Notice also that for the last inequality we have used 
\begin{align*}
\left\|X^{x_\eps}_{\tau_\eps}-X^{x}_{\tau_\eps}\right\|_d\le\eps\cdot \sum_k\sup_{0\le s\le T}\left\|\partial_k X^z_s\right\|_d
\end{align*}
by the mean value theorem, with suitable $z\in\reali^d$ such that $\|z-x\|_d\le \eps$. 

The estimate in \eqref{lipx} and \eqref{grad-1} imply $|v(t,x_\eps)-v(t,x)|\le \hat c(t,x) \eps$, for some other constant $\hat c(t,x)>0$ which can be taken uniform over compact sets. Symmetric arguments allow to prove also that $|v(t,x_\delta)-v(t,x)|\le \hat c(t,x) \delta$, with $x_\delta$ as in \eqref{grad-2}.
\vspace{+5pt}

\emph{Step 2. (Time derivative).} Here we show that $t\mapsto v(t,x)$ is locally Lipschitz and \eqref{vt} holds for a.e.~$t\in[0,T]$ and each given $x\in\reali^d$. We start by providing bounds for the left and right derivatives of $v(\,\cdot\,,x)$.

Fix $(t,x)\in[0,T]\times\reali^d$ and let $\eps>0$. Then letting $\tau=\tau_*(t,x)$ optimal for the problem with initial data $(t,x)$ we notice that $\tau$ is admissible for the problem with initial data $(t-\eps,x)$. Using \eqref{supmg} and \eqref{mg} we obtain the following upper bound.
\begin{align}\label{grad-5}
v(t,x)-v(t-\eps,x)\le\media\Big[&\int_0^\tau\left(h(t+s,X^x_s)-h(t-\eps+s,X^x_s)\right)ds\nonumber\\
&+v(t+\tau,X^x_\tau)-v(t-\eps+\tau,X^x_\tau)\Big]. 
\end{align}
Now we notice that since $v\ge f$ on $[0,T]\times\reali^d$ and $v=f$ in $\cS$, by right continuity of $v(t+\cdot,X^x_\cdot)$ one has
\begin{align*}
&v(t+\tau,X^x_\tau)-v(t-\eps+\tau,X^x_\tau)\le f(t+\tau,X^x_\tau)-f(t-\eps+\tau,X^x_\tau)\quad\text{on $\{\tau<T-t\}$}\\
&v(t+\tau,X^x_\tau)-v(t-\eps+\tau,X^x_\tau)\le g(X^x_{T-t})-v(T-\eps,X^x_{T-t})\quad\text{on $\{\tau=T-t\}$}.
\end{align*}
Moreover from \eqref{V} we also have
\begin{align*}
v(T-\eps,X^x_{T-t})\ge& \media_{X^x_{T-t}}\left[\int_0^\eps h(T-\eps+s, X_s)ds+ g(X_\eps)\right]\nonumber\\
=&g(X^x_{T-t})+\media_{X^x_{T-t}}\left[\int_0^\eps \left(h(T-\eps+s, X_s)+n(X_s)\right)ds\right].
\end{align*}
Collecting the above estimates and using the mean value theorem we conclude
\begin{align}\label{eq:new1}
\frac{1}{\eps}&(v(t,x)-v(t-\eps,x))\nonumber\\
\le& \media\left[\int_0^\tau\partial_t h(t-\eps'_s+s,X^x_s)ds+\mathds{1}_{\{\tau<T-t\}}\partial_t f(t-\eps''_\tau+\tau,X^x_\tau)\right]\\
&-\media_x\left[\mathds{1}_{\{\tau=T-t\}}\media_{X_{T-t}}\left[\frac{1}{\eps}\int_0^\eps \left(h(T-\eps+s, X_s)+n(X_s)\right)ds\right]\right]\nonumber 
\end{align}
for $\eps'_s$ and $\eps''_\tau$ in $[0,\eps]$. Letting $\eps\to0$ and using Assumption \ref{ass:fgh} we get
\begin{align}\label{grad-3}
\limsup_{\eps\to0}&\frac{v(t,x)-v(t-\eps,x)}{\eps}\\
\le &\media\Big[\int_0^{\tau}\partial_t h(t+s,X^x_s)ds+\mathds{1}_{\{\tau<T-t\}} \partial_t f(t+\tau,X^x_{\tau})\Big]\nonumber\\
&-\media\Big[\mathds{1}_{\{\tau=T-t\}}\left(h(T,X^x_{T-t})+n(X^x_{T-t})\right)\Big].\nonumber
\end{align}
To prove a reverse inequality we notice that $\tau\wedge(T-t-\eps)$ is admissible for the problem with initial data $(t+\eps,x)$, so that by using \eqref{supmg} and \eqref{mg} and arguing as above we obtain
\begin{align}\label{vt0}
v(&t+\eps,x)-v(t,x)\nonumber\\
\ge&\media\Big[\int_0^{\tau\wedge(T-t-\eps)}\left(h(t+\eps+s,X^x_s)-h(t+s,X^x_s)\right)ds-\mathds{1}_{\{\tau> T-t-\eps\}}\int^\tau_{T-t-\eps}h(t+s,X^x_s)ds\Big]\nonumber\\
&+\media\Big[\mathds{1}_{\{\tau\le T-t-\eps\}}\left(f(t+\eps+\tau,X^x_\tau)-f(t+\tau,X^x_\tau)\right)\Big]\nonumber\\
&+\media\Big[\mathds{1}_{\{\tau> T-t-\eps\}}\left(g(X^x_{T-t-\eps})-v(t+\tau,X^x_\tau)\right)\Big].
\end{align}
We can collect the two terms with the indicator of $\{\tau>T-t-\eps\}$, use iterated conditioning and the martingale property \eqref{mg} to get 
\begin{align*}
\media&\Big[\mathds{1}_{\{\tau> T-t-\eps\}}\left(g(X^x_{T-t-\eps})-v(t+\tau,X^x_\tau)-\int^\tau_{T-t-\eps}h(t+s,X^x_s)ds\right)\Big]\\
=&\media\Big[\mathds{1}_{\{\tau> T-t-\eps\}}\left(g(X^x_{T-t-\eps})-\media\left[ v(t+\tau,X^x_\tau)+\int^\tau_{T-t-\eps}h(t+s,X^x_s)ds\Big|\cF_{T-t-\eps}\right]\right)\Big]\\
=&\media\Big[\mathds{1}_{\{\tau> T-t-\eps\}}\left(g(X^x_{T-t-\eps})-v(T-\eps,X^x_{T-t-\eps})\right)\Big].
\end{align*}
To estimate the last term we argue as follows  
\begin{align*}
v&(T-\eps,X^x_{T-t-\eps})\\
=&\esssup_{0\le\sigma\le\eps} \media_{X^x_{T-t-\eps}}\left[\int_{0}^{\sigma}h(T-\eps+s,X_s)ds+\mathds{1}_{\{\sigma<\eps\}}f(T-\eps+\sigma,X_{\sigma})+\mathds{1}_{\{\sigma=\eps\}}g(X_{\eps})\right]\\
=&\esssup_{0\le\sigma\le\eps} \media_{X^x_{T-t-\eps}}\left[\int_{0}^{\sigma}h(T-\eps+s,X_s)ds+g(X_{\sigma})+\mathds{1}_{\{\sigma<\eps\}}\left(f(T-\eps+\sigma,X_{\sigma})-g(X_\sigma)\right)\right]
\end{align*}
\begin{align*}
=&g(X^x_{T-t-\eps})\\
&+\esssup_{0\le\sigma\le\eps} \media_{X^x_{T-t-\eps}}\Big[\int_{0}^{\sigma}\left(h(T-\eps+s,X_s)+n(X_s)\right)ds+\mathds{1}_{\{\sigma<\eps\}}\Big(f(T,X_{\sigma})-g(X_\sigma)\Big)\\
&\phantom{+\esssup_{0\le\sigma\le\eps} \media_{X^x_{T-t-\eps}}+}
-\mathds{1}_{\{\sigma<\eps\}}\int_{T-\eps+\sigma}^{T}\partial_t f(u,X_\sigma)du\Big].
\end{align*}
Using that $f(T,x)\le g(x)$ by condition \ref{A$_1$}, we get  
\begin{align}\label{eq:gab0}
v(&T-\eps,X^x_{T-t-\eps})\nonumber\\[+3pt]
\le&g(X^x_{T-t-\eps})\\
&+\esssup_{0\le\sigma\le\eps} \media_{X^x_{T-t-\eps}}\Big[\int_{0}^{\sigma}\!\!\left(h(T-\eps+s,X_s)\!+\!n(X_s)\right)ds+\!\int_{T-\eps+\sigma}^{T}\!|\partial_t f(u,X_\sigma)|du\Big]\nonumber\\
\le&g(X^x_{T-t-\eps})+\media_{X^x_{T-t-\eps}}\Big[\int_{0}^{\eps}|h(T-\eps+s,X_s)+n(X_s)|ds\Big]\notag\\
&+\media_{X^x_{T-t-\eps}}\Big[\int_{0}^{\eps}\sup_{r\le \eps}|\partial_tf(T-\eps+s\wedge(\eps-r)+r,X_r)|ds\Big].\nonumber
\end{align}

Plugging the estimates above inside \eqref{vt0} we then obtain
\begin{align*}
\frac{1}{\eps}&(v(t+\eps,x)-v(t,x))\\
\ge &\media\left[\int_0^{\tau\wedge(T-t-\eps)}\partial_t h(t+\eps'_s+s,X^x_s)ds+\mathds{1}_{\{\tau\le T-t-\eps\}}\partial_t f(t+\eps''_\tau+\tau,X^x_\tau)\right]\\
&-\media_x\left[\mathds{1}_{\{\tau>T-\eps-t\}}\media_{X_{T-t-\eps}}\Big[\frac{1}{\eps}\int_{0}^{\eps}|h(T-\eps+s,X_s)+n(X_s)|ds\Big]\right]\\
&-\media_x\left[\mathds{1}_{\{\tau>T-\eps-t\}}\media_{X_{T-t-\eps}}\Big[\frac{1}{\eps}\int_{0}^{\eps}\sup_{r\le \eps}|\partial_t f(T-\eps+s\wedge(\eps-r)+r,X_r)|ds\Big]\right]
\end{align*}
for suitable $\eps'_s$ and $\eps''_\tau$ in $[0,\eps]$. Taking limits as $\eps\to 0$ we conclude
\begin{align}\label{grad-4}
\liminf_{\eps\to0}&\frac{v(t+\eps,x)-v(t,x)}{\eps}\\
\ge &\media\Big[\int_0^{\tau}\partial_t h(t+s,X^x_s)ds+\mathds{1}_{\{\tau<T-t\}} \partial_t f(t+\tau,X^x_{\tau})\Big]\nonumber\\
&-\media\Big[\mathds{1}_{\{\tau=T-t\}}\left(|h(T,X^x_{T-t})+n(X^x_{T-t})|+|\partial_t f(T,X^x_{T-t})|\right)\Big].\nonumber
\end{align}

So far we have established a lower bound for the right-derivative and an upper bound for the left-derivative of $v(\cdot,x)$. Hence \eqref{vt} holds for all $t\in[0,T]$ at which $v(\cdot,x)$ is differentiable, thanks to \eqref{grad-3} and \eqref{grad-4}. Next we prove that $v(\,\cdot\,,x)$ is indeed locally Lipschitz so that \eqref{vt} holds for a.e.~$t\in[0,T]$ and each given $x\in\reali^d$. 

Let us set $\tau_\eps:=\tau_*(t-\eps,x)$ and notice that $\tau_\eps\wedge(T-t)$ is admissible for the problem with initial data $(t,x)$. Therefore arguing as in \eqref{vt0} we get
\begin{align}\label{vt01}
v(&t,x)-v(t-\eps,x)\nonumber\\
\ge&\media\Big[\int_0^{\tau_\eps\wedge(T-t)}\left(h(t+s,X^x_s)-h(t-\eps+s,X^x_s)\right)ds-\mathds{1}_{\{\tau_\eps> T-t\}}\int^{\tau_\eps}_{T-t}h(t-\eps+s,X^x_s)ds\Big]\nonumber\\
&+\media\Big[\mathds{1}_{\{\tau_\eps\le T-t\}}\left(f(t+\tau_\eps,X^x_{\tau_\eps})-f(t-\eps+\tau_\eps,X^x_{\tau_\eps})\right)\Big]\nonumber\\
&+\media\Big[\mathds{1}_{\{\tau_\eps> T-t\}}\left(g(X^x_{T-t})-v(t-\eps+\tau_\eps,X^x_{\tau_\eps})\right)\Big].
\end{align}
Repeating step by step the arguments that follow \eqref{vt0} we obtain
\begin{align*}
\frac{1}{\eps}&(v(t,x)-v(t-\eps,x))\\
\ge &\media\left[\int_0^{\tau_\eps\wedge(T-t)}\partial_t h(t-\eps'_s+s,X^x_s)ds+\mathds{1}_{\{\tau_\eps\le T-t\}}\partial_t f(t-\eps''_{\tau_\eps}+\tau_\eps,X^x_{\tau_\eps})\right]\\
&-\media_x\left[\mathds{1}_{\{\tau_\eps>T-t\}}\media_{X_{T-t}}\Big[\frac{1}{\eps}\int_{0}^{\eps}|h(T-\eps+s,X_s)+n(X_s)|ds\Big]\right]\\
&-\media_x\left[\mathds{1}_{\{\tau_\eps>T-t\}}\media_{X_{T-t}}\Big[\frac{1}{\eps}\int_{0}^{\eps}\sup_{r\le \eps}|\partial_tf(T-\eps+s\wedge(\eps-r)+r,X_r)|ds\Big]\right],
\end{align*}
with $\eps'_s$ and $\eps''_{\tau_\eps}$ in $[0,\eps]$.
Using Assumption \ref{ass:fgh} and the above expression it is clear that we can find $c(t,x)>0$, which is uniform for $(t,x)$ in a compact and such that $v(t,x)-v(t-\eps,x)\ge -c(t,x)\,\eps$. The latter, together with \eqref{grad-3} imply that $|v(t,x)-v(t-\eps,x)|\le \hat c(t,x)\eps$ for some other $\hat c(t,x)>0$ which is uniform on compact sets. A symmetric argument can be used to obtain an analogous bound for $|v(t+\eps,x)-v(t,x)|$ and therefore $v(\,\cdot\,,x)$ is indeed locally Lipschitz.
\vspace{+5pt}

\emph{Step 3. (Lipschitz property).} To complete the proof it only remains to observe that, combining results in step 1 and 2 above, we obtain that $v$ is locally Lipschitz on $[0,T]\times\reali^d$. Hence, it is differentiable for a.e.~$(t,x)\in[0,T]\times\reali^d$ and \eqref{grad-v} and \eqref{vt} hold a.e.~as claimed.
\end{proof}
 
\begin{remark}
It is important to notice that the results of Theorem \ref{thm:lip} hold in the same form when considering a state dependent diffusion coefficient $\sigma(x)$ in \eqref{X}, provided that $\sigma_{ij}\in C^1(\reali^d;\reali)$. Indeed the proof remains exactly the same as we have never used the specific form of the dynamics of $X$ in \eqref{X}.
\end{remark}

There is a simple and useful corollary to the theorem
\begin{corollary}\label{cor:bounds}
Assume $T<+\infty$. Let condition \ref{A$_1$} and one of the two conditions below hold 
\begin{itemize}
\item[(i)] $g(x)=f(T,x)$, $x\in\reali^d$,
\item[(ii)] $\exists\, c>0$ such that $h(T,x)+n(x)\ge -\partial_t f(T,x)-c$, for $x\in\reali^d$.
\end{itemize}
Then for a.e.~$(t,x)\in[0,T]\times\reali^d$ and $\tau_*=\tau_*(t,x)$ we have
\begin{align}\label{vtup1}
\partial_t v(t,x)\le \media\left[\int_0^{\tau_*}\partial_t h(t+s,X^x_s)ds+\partial_t f(t+\tau_*,X^x_{\tau_*})\right]+c\,\prob(\tau_*=T-t)
\end{align}
where $c=0$ if (i) holds.
\end{corollary}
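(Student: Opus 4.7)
The plan is to apply Theorem \ref{thm:lip} and reorganize its upper bound $\overline v(t,x)$ in two different ways according to which of (i) or (ii) is in force. By Theorem \ref{thm:lip}, $v(\,\cdot\,,x)$ is locally Lipschitz, so $\partial_tv(t,x)$ exists for a.e.~$(t,x)$, and at such points $\partial_tv(t,x)\le\overline v(t,x)$. I first rewrite $\overline v$ by splitting $\partial_tf(t+\tau_*,X^x_{\tau_*})=\mathds{1}_{\{\tau_*<T-t\}}\partial_tf(t+\tau_*,X^x_{\tau_*})+\mathds{1}_{\{\tau_*=T-t\}}\partial_tf(T,X^x_{T-t})$, which yields
\begin{align*}
\overline v(t,x)=&\media\Big[\int_0^{\tau_*}\partial_th(t+s,X^x_s)ds+\partial_tf(t+\tau_*,X^x_{\tau_*})\Big]\\
&-\media\Big[\mathds{1}_{\{\tau_*=T-t\}}\bigl(h(T,X^x_{T-t})+n(X^x_{T-t})+\partial_tf(T,X^x_{T-t})\bigr)\Big].
\end{align*}

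Under hypothesis (ii), the assumption rewrites as $h(T,y)+n(y)+\partial_tf(T,y)\ge-c$ pointwise, so the last expectation is bounded below by $-c\,\prob(\tau_*=T-t)$. Combined with $\partial_tv\le\overline v$ this gives \eqref{vtup1} immediately.

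Under hypothesis (i) we seek the sharper bound with $c=0$, and for this the pointwise estimate above is not sufficient since the sign of $m(T,\cdot)+h(T,\cdot)$ is not controlled. Instead I revisit the derivation of $\overline v$ in \emph{Step 2} of the proof of Theorem \ref{thm:lip}: the boundary contribution in \eqref{eq:new1} arose from the sub-optimal choice $\sigma=\varepsilon$ giving $v(T-\varepsilon,X^x_{T-t})\ge g(X^x_{T-t})+\media_{X^x_{T-t}}\bigl[\int_0^\varepsilon(h+n)(T-\varepsilon+s,X_s)ds\bigr]$. Under (i), I replace this by the trivial lower bound $v(T-\varepsilon,y)\ge f(T-\varepsilon,y)$ coming from $v\ge f$. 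Combined with $g(y)=f(T,y)$ from (i), this produces
\begin{equation*}
g(X^x_{T-t})-v(T-\varepsilon,X^x_{T-t})\le f(T,X^x_{T-t})-f(T-\varepsilon,X^x_{T-t}).
\end{equation*}
Inserting this new estimate in place of the $(h+n)$-term in the chain leading to \eqref{eq:new1}, dividing by $\varepsilon$ and passing to the limit by the mean value theorem and dominated convergence (Assumption \ref{ass:fgh}), I obtain
\begin{equation*}
\limsup_{\varepsilon\downarrow 0}\frac{v(t,x)-v(t-\varepsilon,x)}{\varepsilon}\le\media\Big[\int_0^{\tau_*}\partial_th(t+s,X^x_s)ds+\partial_tf(t+\tau_*,X^x_{\tau_*})\Big],
\end{equation*}
which is the desired inequality with $c=0$ wherever $\partial_tv(t,x)$ exists.

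The only delicate point is spotting that case (i) requires a modification of one step inside the proof of Theorem \ref{thm:lip} rather than merely a pointwise manipulation of $\overline v$; once the correct replacement $v(T-\varepsilon,\cdot)\ge f(T-\varepsilon,\cdot)$ is identified, everything else is a routine reprise of the estimates already carried out in Theorem \ref{thm:lip}.
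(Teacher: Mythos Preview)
Your proof is correct and follows essentially the same approach as the paper. For case~(ii) you spell out the rearrangement of $\overline v$ that the paper leaves implicit, and for case~(i) you correctly identify that one must return to the estimate following \eqref{grad-5} and replace the lower bound on $v(T-\eps,X^x_{T-t})$ by $v\ge f$, which combined with $g=f(T,\cdot)$ yields the claim---exactly as the paper does.
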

\begin{proof}
Under $(ii)$ the claim is trivial since $\partial_t v\le \overline v$ and recalling \eqref{upvt}. Under $(i)$ instead, we notice that \eqref{grad-5} in the proof of Theorem \ref{thm:lip} may be bounded as follows
\begin{align*}
v(t,x)-v(t-\eps,x)\le&\media\Big[\int_0^\tau\left(h(t+s,X^x_s)-h(t-\eps+s,X^x_s)\right)ds\\
&\hspace{+20pt}+v(t+\tau,X^x_\tau)-v(t-\eps+\tau,X^x_\tau)\Big]\\
\le&\media\Big[\int_0^\tau\left(h(t+s,X^x_s)-h(t-\eps+s,X^x_s)\right)ds\\
&\hspace{+20pt}+f(t+\tau,X^x_\tau)-f(t-\eps+\tau,X^x_\tau)\Big]. 
\end{align*}
Then dividing by $\eps$ and taking limits as $\eps\to0$ we obtain \eqref{vtup1}, thanks to Assumption \ref{ass:fgh}.
\end{proof}

Before concluding the section we provide two simple technical lemmas which will be useful in the next section.
\begin{lemma}\label{lem:gradX}
For $k=1,\ldots d$ one has $\prob$-almost surely
\begin{align}\label{gradX}
\sup_{0\le t\le T}\|\partial_k X^x_t\|_d^2\le 2\exp\left(2T\int^T_0\sum^d_{j=1}\|\nabla_x\mu_j(X^x_s)\|_d^2 ds\right).
\end{align}
\end{lemma}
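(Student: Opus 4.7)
The starting point is to recognize that \eqref{DX}, for fixed $k$, expresses the vector $Z^x_t:=\partial_k X^x_t\in\reali^d$ as the solution of a pathwise linear integral equation driven by $X^x$ alone (there is no stochastic integral because $\sigma$ is constant). Namely, letting $A_s=A_s(\omega)$ be the $d\times d$ matrix with entries $(A_s)_{j,l}=\partial_l\mu_j(X^x_s)$, \eqref{DX} reads
\begin{equation*}
Z^x_t=e_k+\int_0^t A_s\,Z^x_s\,ds,
\end{equation*}
where $e_k$ is the $k$-th standard basis vector. Note that the Frobenius norm of $A_s$ satisfies $\|A_s\|_F^{\,2}=\sum_{j=1}^d\|\nabla_x\mu_j(X^x_s)\|_d^{\,2}$, which is precisely the integrand appearing in the claim. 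So the whole task reduces to a deterministic Gronwall-type estimate, valid pathwise for $\prob$-a.e.~$\omega$.

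The plan is to take Euclidean norms, bound $\|A_s Z^x_s\|_d\le\|A_s\|_F\,\|Z^x_s\|_d$ (a consequence of the Cauchy-Schwarz inequality applied row-by-row), and then square. Using $(a+b)^2\le 2a^2+2b^2$ together with the integral form of Cauchy-Schwarz $\left(\int_0^t fg\,ds\right)^2\le t\int_0^t f^2\,ds\cdot \text{(stuff)}$, one arrives at an inequality of the form
\begin{equation*}
\|Z^x_t\|_d^{\,2}\le 2+2T\int_0^t\|A_s\|_F^{\,2}\,\|Z^x_s\|_d^{\,2}\,ds.
\end{equation*}
Taking $\phi(t):=\sup_{0\le s\le t}\|Z^x_s\|_d^{\,2}$ preserves the same inequality (since the right-hand side is monotone in $t$ and in $\|Z^x_s\|_d^{\,2}$), and an application of the Gronwall lemma then yields the bound
\begin{equation*}
\sup_{0\le t\le T}\|Z^x_t\|_d^{\,2}\le 2\exp\left(CT\int_0^T\sum_{j=1}^d\|\nabla_x\mu_j(X^x_s)\|_d^{\,2}\,ds\right)
\end{equation*}
with $C$ a universal constant.

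I do not expect any genuine obstacle: the argument is an exercise in Gronwall's inequality, and the regularity $\mu\in C^1(\reali^d;\reali^d)$ is exactly what guarantees the integral is $\prob$-a.s.~finite on compacts of $X^x$. The only delicate point is bookkeeping of constants if one wants to match exactly the exponent displayed in \eqref{gradX} (the constant in front of $T$ inside the exponential depends on the precise way Cauchy-Schwarz is applied when passing from the $\ell^2$-norm bound to the squared bound; different choices give constants differing by a factor of $2$, but this is inessential for the subsequent use of the lemma in later arguments).
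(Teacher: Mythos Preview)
Your proof is correct and follows essentially the same route as the paper's own argument: both expand $\|\partial_k X^x_t\|_d^2$ via \eqref{DX}, apply $(a+b)^2\le 2(a^2+b^2)$ together with Cauchy--Schwarz to reach the integral inequality $\|\partial_k X^x_t\|_d^2\le 2+2T\int_0^t\sum_j\|\nabla_x\mu_j(X^x_s)\|_d^2\,\|\partial_k X^x_s\|_d^2\,ds$, and then conclude by Gronwall. The only cosmetic difference is that you phrase the estimate in terms of the Frobenius norm of the Jacobian matrix $A_s$, whereas the paper works component by component; your remark about the factor of $2$ in the exponent is also accurate (the paper's displayed bound and its derived inequality differ precisely by that harmless factor).
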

\begin{proof}
By using $|a+b|^2\le 2(|a|^2+|b|^2)$ and H\"older inequality applied to \eqref{DX} we get
\begin{align*}
\|\partial_k X^x_t\|_d^2=&\sum^d_{j=1}\left(\delta_{j,k}+\int_0^t\langle\nabla_x\mu_j(X^x_s),\partial_kX^x_s\rangle ds\right)^2\\
\le&2\left(1+T\sum_{j=1}^d\int_0^t\|\nabla_x\mu_j(X^x_s)\|_d^2\|\partial_kX^x_s\|_d^2ds\right).
\end{align*}
An application of Gronwall's inequality concludes the proof.
\end{proof} 

\begin{lemma}\label{lem:growth}
Let $q:\reali^d\to \reali$ be Borel-measurable and with growth 
\begin{align}
|q(x)|\le q_0(1+ \|x\|_d^p)
\end{align}
for some $q_0>0$ and $p\ge 1$. Assume $\|\mu(x)\|_d\le C$ for all $x\in\reali^d$ and a given constant $C>0$. Then for $T<+\infty$ and any stopping time $\tau\in[t,T]$ we have  
\begin{align}\label{subl}
\media_{t,x}\left[\mathds{1}_{\{\tau=T\}} q(X_\tau)\right]\le K\,(1+\|x\|_d^p)(T-t)^{-1}\media_{t,x}(\tau-t),\quad (t,x)\in[0,T]\times\reali^d
\end{align} 
where $K>0$ depends only on $q_0$, $d$, $p$, $C$ and $T$.
\end{lemma}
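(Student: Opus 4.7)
My plan hinges on the elementary observation that $\mathds{1}_{\{\tau=T\}}(T-t)\le \tau-t$ almost surely. Combined with the growth assumption on $q$, this yields the reduction
\begin{align*}
\media_{t,x}\bigl[\mathds{1}_{\{\tau=T\}}|q(X_T)|\bigr]\le \frac{q_0}{T-t}\,\media_{t,x}\bigl[(\tau-t)(1+\|X_T\|_d^p)\bigr],
\end{align*}
so it suffices to show $\media_{t,x}[(\tau-t)\|X_T\|_d^p]\le K'(1+\|x\|_d^p)\,\media_{t,x}(\tau-t)$. Using $\|\mu\|_d\le C$ and the elementary inequality $(a+b+c)^p\le 3^{p-1}(a^p+b^p+c^p)$ I would decompose
\begin{align*}
\|X_T\|_d^p\le 3^{p-1}\bigl(\|x\|_d^p+C^p(T-t)^p+\|\sigma(B_T-B_t)\|_d^p\bigr).
\end{align*}
The first two pieces are pathwise polynomial in $(\|x\|_d,T)$ and, once multiplied by $\tau-t$ and averaged, directly contribute a term of the form $K''(1+\|x\|_d^p)\media_{t,x}(\tau-t)$.

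The crux of the argument is the Brownian contribution $\media_{t,x}[(\tau-t)\|\sigma(B_T-B_t)\|_d^p]$. A naive Cauchy--Schwarz estimate $\media[(\tau-t)Y]\le (\media(\tau-t)^2)^{1/2}(\media Y^2)^{1/2}$ applied to $Y=\|\sigma(B_T-B_t)\|_d^p$ only yields a factor $(\media_{t,x}(\tau-t))^{1/2}$ at the end, which is insufficient. To circumvent this I would split $B_T-B_t=(B_T-B_\tau)+(B_\tau-B_t)$ and bound each piece separately. For the first summand the strong Markov property of $B$ gives $\media[\|B_T-B_\tau\|_d^p\mid\cF_\tau]=c_p(T-\tau)^{p/2}\le c_p(T-t)^{p/2}$, and since $\tau-t$ is $\cF_\tau$-measurable, the tower property produces $\media[(\tau-t)\|B_T-B_\tau\|_d^p]\le c_p(T-t)^{p/2}\media_{t,x}(\tau-t)$. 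For the second summand, Burkholder--Davis--Gundy combined with $\tau\le T$ delivers $\media\|B_\tau-B_t\|_d^{2p}\le K_{2p}\media(\tau-t)^p\le K_{2p}(T-t)^{p-1}\media_{t,x}(\tau-t)$ (using $p\ge1$ and $\tau-t\le T-t$), and then Cauchy--Schwarz together with $\media(\tau-t)^2\le(T-t)\media_{t,x}(\tau-t)$ yields $\media[(\tau-t)\|B_\tau-B_t\|_d^p]\le K_{2p}^{1/2}(T-t)^{p/2}\media_{t,x}(\tau-t)$.

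Summing the two Brownian pieces via $(a+b)^p\le 2^{p-1}(a^p+b^p)$, using $(T-t)^{p/2}\le T^{p/2}$ to absorb the residual power of $(T-t)$ into the constant, and inserting everything back into the reduction of the first step produces the claimed inequality with $K$ depending only on $q_0,d,p,C,T$ and the fixed matrix $\sigma$. The main obstacle I anticipate is precisely the Brownian term: the $\tau$-correlation between $B_T-B_t$ and the weight $\tau-t$ cannot be estimated by naive Cauchy--Schwarz without paying a spurious square-root, and the decomposition of the increment at the stopping time $\tau$ combined with the strong Markov property of $B$ is what removes this loss.
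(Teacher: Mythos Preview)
Your proof is correct, but it takes a different and somewhat longer route than the paper's.

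The paper does \emph{not} convert the indicator $\mathds{1}_{\{\tau=T\}}$ into $(\tau-t)/(T-t)$ at the outset. Instead it keeps the indicator and applies Cauchy--Schwarz to the pair $\bigl(\mathds{1}_{\{\tau=T\}},\,\|\sigma(B_\tau-B_t)\|_d^{p}\bigr)$, obtaining
\[
\media_{t,x}\bigl[\mathds{1}_{\{\tau=T\}}\|\sigma(B_\tau-B_t)\|_d^{p}\bigr]
\le \sqrt{\prob_{t,x}(\tau=T)}\,\sqrt{\media_{t,x}\bigl[\|\sigma(B_\tau-B_t)\|_d^{2p}\bigr]}.
\]
Burkholder--Davis--Gundy then bounds the second factor by $c\,\media_{t,x}[(\tau-t)^p]^{1/2}$, and only at the end does Markov's inequality $\prob_{t,x}(\tau=T)\le (T-t)^{-p}\media_{t,x}[(\tau-t)^p]$ together with $(\tau-t)^p\le (T-t)^{p-1}(\tau-t)$ convert everything into $(T-t)^{-1}\media_{t,x}(\tau-t)$.

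Your pathwise conversion $\mathds{1}_{\{\tau=T\}}\le (\tau-t)/(T-t)$ is essentially Markov's inequality applied \emph{first}, and it is precisely this early move that manufactures the correlation between $\tau-t$ and $B_T-B_t$ you then have to disentangle by splitting the increment at $\tau$ and invoking the strong Markov property. By contrast, because $\mathds{1}^2=\mathds{1}$, Cauchy--Schwarz against the indicator loses nothing, and the paper avoids the extra decomposition entirely. Both arguments reach the same bound with constants depending only on $q_0,d,p,C,T,\sigma$; the paper's ordering (Cauchy--Schwarz first, Markov last) is simply more economical.
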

\begin{proof}
Using polynomial growth and H\"older inequality, and by letting $c>0$ be a constant that changes from line to line, we get
\begin{align*}
\media_{t,x}&\left[\mathds{1}_{\{\tau=T\}}q(X_{\tau})\right]\nonumber\\
\le& c\,\media_{t,x}\left[\mathds{1}_{\{\tau=T\}}\left(1+\|X_{\tau}\|_d^p\right)\right]\nonumber\\
\le& c\,\media_{t,x}\left[\mathds{1}_{\{\tau=T\}}\left(1+\|x\|_d^p+\left\|\int_t^{\tau}\mu(X_s)ds\right\|_d^p+\|\sigma (B_{\tau}-B_t)\|_d^p\right)\right]\nonumber\\
\le &c\, \Big(\,(1+\|x\|_d^p)\prob_{t,x}(\tau=T)+\media_{t,x}\Big[\mathds{1}_{\{\tau=T\}}(\tau-t)^p\Big]\nonumber\\
&\hspace{+18pt}+\sqrt{\prob_{t,x}(\tau=T)}\sqrt{\media_{t,x}\left[\|\sigma (B_{\tau}-B_t)\|_d^{2p}\right]}\,\Big)
\nonumber\\
\le &c\, \Big(\,(1+\|x\|_d^p)\prob_{t,x}(\tau=T)+\sqrt{\prob_{t,x}(\tau=T)}\sqrt{\media_{t,x}\Big[(\tau-t)^p\Big]}\,\Big).
\end{align*}
Finally, by using Markov inequality 
\begin{align}\label{eq:markov}
\prob_{t,x}(\tau=T)=\prob_{t,x}(\tau-t\ge T-t)\le (T-t)^{-p}\media_{t,x}\left[(\tau-t)^p\right]
\end{align}
we obtain
\begin{align*}
\media_{t,x}&\left[\mathds{1}_{\{\tau=T\}}q(X_{T})\right]\nonumber\\
\le &c\, \Big(\,(1+\|x\|_d^p)(T-t)^{-1}\media_{t,x}(\tau-t)+\tfrac{1}{(T-t)^{p/2}}\media_{t,x}\Big[(\tau-t)^p\Big]\,\Big)\nonumber\\
\le &c\, \Big(\,1+\|x\|_d^p +(T-t)^{p/2}\Big)(T-t)^{-1}\media_{t,x}(\tau-t).
\end{align*}
It is now immediate to obtain \eqref{subl}.
\end{proof}

\section{Properties of the optimal boundary}\label{sec:lip}

In order to analyse the shape of the continuation set and later on the regularity of its boundary we need to restrict our assumptions. In particular in the following we will often need
\begin{description}
\item [(A$_2$) Terminal value.\namedlabel{A$_2$}{(A$_2$)}] If $T<+\infty$ we have $ \partial_1 g(x)\ge \partial_1 f(T,x)$.
\end{description}

\begin{description}
\item[(B) Drift. \namedlabel{B}{(B)}] For $k=2,\ldots,d$ it holds $\mu_k(x)=\mu_k(x_2,\ldots x_d)$, hence $\partial_1 X^{x,k}\equiv 0$ and from \eqref{DX}
\begin{align}\label{dX1}
\partial_1 X^{x,1}_t=1+\int_0^t\partial_1\mu_1(X^x_s)\partial_1 X^{x,1}_sds.
\end{align}
\item[(C) Spatial monotonicity for $m+h$.  \namedlabel{C}{(C)}] Let $\partial_1 f\in C^{1,2}([0,T]\times\reali^d)$ so that $m(t,x)$ in \eqref{def:m} is continuously differentiable with respect to $x_1$. Assume also that
\begin{align}\label{positive}
\partial_1(h+m)(t,x)\ge 0,\qquad \text{for $(t,x)\in[0,T]\times\reali^d$,}
\end{align}
and that for any compact $K\subseteq [0,T]\times\reali^d$ we have
\begin{align*}
\sup_{(t,x)\in K}\media\left[\int_0^{T-t}|\partial_1 m(t+s,X^x_s)|^2ds\right]<+\infty.
\end{align*}
\end{description}
Condition \ref{B} allows a tractable expression for $\partial_1 X^{x,1}$ (see \eqref{dX1}), which together with condition \ref{C} provide a simple way of determining the shape of the continuation region (see the next proposition). 
\begin{proposition}\label{b:existence}
Assume conditions \ref{A$_1$}, \ref{A$_2$}, \ref{B} and \ref{C}. Then the stopping region is characterised by a free boundary  
\begin{align}
b:[0,T]\times \reali^{d-1}\to\reali\cup\{\pm\infty\}
\end{align}
such that 
\begin{align}
\cS=\{(t,x_1,x_2,\ldots x_d)\in[0,T]\times \reali^{d}: x_1\le b(t,x_2,\ldots x_d)\}
\end{align}
\end{proposition}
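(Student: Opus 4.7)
The goal is to show that for every fixed $(t, x_2, \ldots, x_d) \in [0,T] \times \reali^{d-1}$, the slice $\cS \cap (\{t\} \times \reali \times \{x_2\} \times \cdots \times \{x_d\})$ is a downward-closed subset of the line $\reali \times \{(x_2, \ldots, x_d)\}$; together with the closedness of $\cS$ (since $\cS = \{v = f\}$ is the zero set of the lower semi-continuous non-negative function $v-f$, $f$ being continuous) this forces the slice to be either empty, all of $\reali$, or of the form $(-\infty, b]$. Setting
\[
b(t, x_2, \ldots, x_d) := \sup\{x_1 \in \reali : (t, x_1, x_2, \ldots, x_d) \in \cS\}
\]
with the convention $\sup \emptyset = -\infty$ then yields the representation claimed in the proposition. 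To establish downward-closedness it suffices to prove that $w(t, x) := v(t, x) - f(t, x) \geq 0$ is non-decreasing in $x_1$.

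The first step is to derive a convenient representation of $w$. Under \ref{A} the stopping time $\tau_* = \tau_*(t, x)$ from \eqref{entry} is optimal; applying Dynkin's formula to $f \in C^{1,2}$ at the bounded stopping time $\tau_*$ and subtracting from the optimal-stopping identity \eqref{V} for $v$, the $f$-boundary terms on $\{\tau_* < T-t\}$ cancel and one obtains
\[
w(t, x) = \media\left[\int_0^{\tau_*}(h + m)(t+s, X^x_s)\, ds + \mathds{1}_{\{\tau_* = T-t\}}\bigl(g - f(T, \cdot)\bigr)(X^x_{T-t})\right],
\]
with $m$ as in \eqref{def:m} and the integrability required by Dynkin furnished by Assumption \ref{ass:fgh} and \ref{C}. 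For a perturbed starting point $x' = (x_1', x_2, \ldots, x_d)$ with $x_1' \geq x_1$, the same $\tau_*$ is admissible (though generally sub-optimal) for $v(t, x')$; the analogous calculation along the process $X^{x'}$ yields the one-sided bound
\[
w(t, x') \geq \media\left[\int_0^{\tau_*}(h + m)(t+s, X^{x'}_s)\, ds + \mathds{1}_{\{\tau_* = T-t\}}\bigl(g - f(T, \cdot)\bigr)(X^{x'}_{T-t})\right].
\]

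The decisive pathwise comparison is where condition \ref{B} enters. Since $\mu_k$ for $k \geq 2$ does not depend on $x_1$, the components $X^{x, k}$ and $X^{x', k}$ satisfy the same SDE with the same initial condition for each $k \geq 2$, so $X^{x', k}_s = X^{x, k}_s$ $\prob$-almost surely. The first component then obeys a one-dimensional equation whose solution is pathwise non-decreasing in $x_1$: the linear equation \eqref{dX1} gives $\partial_1 X^{x,1}_t = \exp\bigl(\int_0^t \partial_1 \mu_1(X^x_s)\, ds\bigr) > 0$, which precludes crossings and yields $X^{x', 1}_s \geq X^{x, 1}_s$ $\prob$-a.s. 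Combined with the monotonicity of $h + m$ in $x_1$ encoded in \eqref{positive} and the monotonicity of $g - f(T, \cdot)$ in $x_1$ from the second assertion of \ref{A}, each summand in the expression above is pointwise $\prob$-a.s.\ at least as large at $x'$ as at $x$, so $w(t, x') \geq w(t, x)$.

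The step I expect to demand most care is the rigorous justification of Dynkin's formula at the random stopping time $\tau_*$, which may equal $T-t$ with positive probability, together with the handling of the terminal term through the indicator; this is nonetheless standard given $f \in C^{1,2}([0,T] \times \reali^d)$ and the square integrability bundled into Assumption \ref{ass:fgh} and \ref{C}. Once the monotonicity of $x_1 \mapsto w(t, x)$ is in hand, the definition of $b$ stated at the outset delivers the representation of $\cS$ asserted in the proposition.
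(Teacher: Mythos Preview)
Your argument is correct and reaches the same conclusion as the paper, but the route differs in an interesting way. The paper leverages the gradient representation \eqref{grad-v} from Theorem~\ref{thm:lip}: it writes $\partial_1 v$ probabilistically, uses condition~\ref{B} to reduce to the single component $\partial_1 X^{x,1}_s=\exp\bigl(\int_0^s\partial_1\mu_1(X^x_u)\,du\bigr)$, and then applies Dynkin's formula to $\partial_1 X^{x,1}_{\tau_*}\,\partial_1 f(t+\tau_*,X^x_{\tau_*})$ to arrive at the inequality
\[
\partial_1(v-f)(t,x)\ge \media\Big[\int_0^{\tau_*}e^{\int_0^s\partial_1\mu_1(X^x_u)\,du}\,\partial_1(h+m)(t+s,X^x_s)\,ds\Big]\ge 0.
\]
Your proof bypasses the differentiation entirely: you apply Dynkin directly to $f$ to rewrite $w$ in terms of $(h+m)$ and the terminal gap $g-f(T,\cdot)$, and then use a pathwise comparison of $X^{x'}$ and $X^{x}$ (enabled by~\ref{B}) together with the monotonicity in $x_1$ of $h+m$ (from~\ref{C}) and of $g-f(T,\cdot)$ (from~\ref{A}). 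This is more elementary in that it does not rely on Theorem~\ref{thm:lip}, and it yields monotonicity of $w$ at every point rather than $\partial_1 w\ge 0$ almost everywhere. The paper's approach, on the other hand, produces the explicit lower bound \eqref{monot2} for $\partial_1 w$, which is reused later (e.g.\ in the proofs of Theorems~\ref{thm:lip-d1} and~\ref{thm:G}); your comparison argument does not directly deliver that quantitative bound.
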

\begin{proof}
In order to prove the claim it is sufficient to show that $\partial_1(v-f)(t,x)\ge 0$. The latter indeed implies 
\begin{align*}
(t,x_1,x_2,\ldots x_d)\in\cS\implies (t,x'_1,x_2,\ldots x_d)\in\cS\quad\text{for all $x'_1\le x_1$}. 
\end{align*}

The task is relatively easy thanks to \eqref{grad-v}. Notice that $\partial_1 X^x=(\partial_1 X^{x,1},0,\ldots 0)$ due to condition \ref{B} and 
\begin{align*}
\partial_1 X^{x,1}_t=\exp\left(\int_0^t\partial_1\mu_1(X^x_s)ds\right).
\end{align*}
Then, using that $\partial_1 g(x)\ge \partial_1 f(T,x)$ due to condition \ref{A$_2$}, for a.e.~$(t,x)\in[0,T]\times\reali^d$ we get
\begin{align}
\label{monot1}
\partial_1 v(t,x)=&\media\Big[\int_0^{\tau_*}\partial_1 h(t+s,X^x_s)\partial_1 X^{x,1}_s ds+\mathds{1}_{\{\tau_*<T-t\}}\partial_1 f(t+\tau_*,X^x_{\tau_*})\partial_1 X^{x,1}_{\tau_*}\nonumber\\
&\hspace{+15pt}+\mathds{1}_{\{\tau_*=T-t\}}\partial_1 g(X^x_{T-t})\partial_1 X^{x,1}_{T-t} \Big]\\
\ge& \media\Big[\int_0^{\tau_*}\partial_1 h(t+s,X^x_s)\partial_1 X^{x,1}_s ds+\partial_1 f(t+\tau_*,X^x_{\tau_*})\partial_1 X^{x,1}_{\tau_*}\Big].\nonumber
\end{align}
Now an application of Dynkin's formula gives
\begin{align*}
\media&\left[ \partial_1 X^{x,1}_{\tau_*}\partial_1 f(t+\tau_*,X^x_{\tau_*})\right]\\
=& \partial_1 f(t,x)+\media\Big[\int_0^{\tau_*}e^{\int_0^s\partial_1\mu_1(X^x_u)du}\left(\partial_t \partial_1 f+\cL (\partial_1 f)+\partial_1\mu_1\partial_1 f\right)(t+s,X^x_s)ds\Big]\\
=& \partial_1 f(t,x)+\media\Big[\int_0^{\tau_*}e^{\int_0^s\partial_1\mu_1(X^x_u)du}\partial_1 m(t+s,X^x_s)ds\Big]
\end{align*}
where we have used the easily verifiable equality $\partial_1 m(t,x)=\left(\partial_t \partial_1 f+\cL (\partial_1 f)+\partial_1\mu_1\partial_1 f\right)(t,x)$.

Plugging the last expression in \eqref{monot1} and rearranging terms gives
\begin{align}\label{monot2}
\partial_1(v-f)(t,x)\ge \media\Big[\int_0^{\tau_*}e^{\int_0^s\partial_1\mu_1(X^x_u)du}\,\partial_1( h + m)(t+s,X^x_s)ds\Big]\ge 0
\end{align}
thanks to \eqref{positive}. 
\end{proof}

\begin{remark}
It should be clear that a completely symmetric result holds if \eqref{positive} is replaced by $\partial_1(h+m)\le 0$ and similarly one assumes $\partial_1 g(x)\le \partial_1 f(T,x)$. In such case arguments analogous to the ones employed to prove Proposition \ref{b:existence} may be instead used to prove that $\cS=\{(t,x_1,x_2,\ldots x_d)\in[0,T]\times \reali^{d}: x_1\ge b(t,x_2,\ldots x_d)\}$. 
\end{remark}

If \eqref{positive} holds, then for each $(t,x_2,\ldots x_d)\in[0,T]\times\reali^{d-1}$ we can define 
\begin{align}\label{theta}
\gamma(t,x_2,\ldots x_d):=\inf\{x_1\in\reali:(h+m)(t,x)>0\}
\end{align}
with $\gamma(t,x_2,\ldots x_d)=+\infty$ if the set is empty. It follows from standard arguments that the set 
\begin{align*}
\mathcal{R}:=\{(t,x)\in[0,T]\times\reali^d:x_1>\gamma(t,x_2,\ldots x_d)\}
\end{align*}
is contained in $\cC$. Obviously if $d=1$ then $\gamma$ is a function of time only.

In the next sections we state the results concerning regularity of the optimal boundary.
 
\subsection{Lipschitz boundary for $d=1$}

Here we prove Lipschitz continuity of $b$ in the case $d=1$ and for that we are going to need 
\begin{description}
\item[(D) Bounds I. \namedlabel{D}{(D)}] Let $f\in C^{2,3}([0,T]\times\reali)$ so that $m\in C^{1}([0,T]\times\reali)$. There exists $c>0$ such that $\partial_t(h+m)(t,x)\le c(1+\partial_1(h+m)(t,x))$ for $(t,x)\in[0,T]\times\reali$ and, for any compact $K\subseteq [0,T]\times\reali$, we have
\begin{align*}
\sup_{(t,x)\in K}\media\left[\int_0^{T-t}|\partial_t m(t+s,X^x_s)|^2ds\right]<+\infty.
\end{align*}
\end{description}
We would like to remark that some of the assumptions we make for the proof below may be relaxed when the structure of $f,g,h$ and $\mu$ is known explicitly. This fact will be illustrated in Example 1 in Section \ref{sec:examples}. Notice also that in this setting $\sigma\in\reali^{d'}$ and $\sigma\sigma^\top=\|\sigma\|^2_{d'}$. In what follows $\overline\cI$ denotes the closure of a set $\cI$.

\begin{theorem}\label{thm:lip-d1}
Assume that $d=1$ and $\sigma\sigma^\top>0$. Assume \ref{A$_1$}, \ref{A$_2$}, \ref{C}, \ref{D} and, if $T<+\infty$, either $(i)$ or $(ii)$ from Corollary \ref{cor:bounds}. 
Assume further that $\partial_1 \mu(x)>-\mu$ for some $\mu>0$ and there exists an interval $\cI:=(t_1,t_2)$ with $\overline\cI\subset [0,T)$ and such that
\begin{itemize}
\item[(i)] $\overline{\gamma}:=\sup_{t\in\overline{\cI}}\gamma(t)<+\infty$ (see \eqref{theta});
\item[(ii)] there exists $r\in(\overline{\gamma},+\infty)$ and $\alpha_r>0$ such that $\partial_1(h+m)(t,x)=\partial_x(h+m)(t,x)\ge \alpha_r$ for $(t,x)\in\overline{\cI}\times(-\infty,r)$.
\end{itemize}
Then, for any $\eps>0$ there is $K_\eps>0$ such that $b$ is (bounded) Lipschitz on $[t_1+\eps,t_2-\eps]$ with Lipschitz constant $K_\eps$.
\end{theorem}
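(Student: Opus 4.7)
\emph{Strategy.} The free boundary satisfies $\psi(t,b(t))=0$ with $\psi:=v-f\ge 0$, but smooth fit forces $\partial_x\psi$ to vanish on $b$, so the implicit function theorem cannot be applied directly to $\psi=0$. My plan is to apply it instead to the level curves $\{\psi=\eta\}$ for small $\eta>0$, to parametrise them as $x=b_\eta(t)$ on $[t_1,t_2-\eps]$, to prove that their Lipschitz constants are bounded uniformly in $\eta$, and to pass to the limit $\eta\downarrow 0$; the convergence $b_\eta\downarrow b$ pointwise comes from continuity of $\psi$ (Theorem~\ref{thm:lip}) together with strict monotonicity $\partial_x\psi>0$ on the relevant part of $\cC$.

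\emph{The key inequality.} The central estimate is $\partial_t\psi(t,x)\le L\,\partial_x\psi(t,x)$ on a compact subset $K$ of $\cC$ containing a neighbourhood of the graph of $b|_{[t_1,t_2-\eps]}$. Starting from Corollary~\ref{cor:bounds} (available via hypothesis $(i)$ or $(ii)$ when $T<+\infty$, trivially otherwise), I subtract $\partial_t f(t,x)$ and apply Dynkin's formula to $\partial_t f$, using $\partial_t m=\partial_t\partial_t f+\cL\partial_t f$, to obtain
\begin{align*}
\partial_t\psi(t,x)\le\media\Big[\int_0^{\tau_*}\partial_t(h+m)(t+s,X^x_s)\,ds\Big]+c_0\,\prob(\tau_*=T-t).
\end{align*}
Condition~\ref{D} replaces $\partial_t(h+m)$ by $c(1+\partial_x(h+m))$, while equation~\refe{monot2} of Proposition~\ref{b:existence} together with $\partial_1\mu>-\mu$ gives
\begin{align*}
\partial_x\psi(t,x)\ge e^{-\mu T}\,\media\Big[\int_0^{\tau_*}\partial_x(h+m)(t+s,X^x_s)\,ds\Big].
\end{align*}
Combining yields $\partial_t\psi\le c\,e^{\mu T}\,\partial_x\psi+c\,\media[\tau_*]+c_0\,\prob(\tau_*=T-t)$.

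\emph{Controlling the residuals.} To absorb $\media[\tau_*]$ and $\prob(\tau_*=T-t)$ into $\partial_x\psi$, I exploit the restriction $t\le t_2-\eps$, which forces $T-t\ge T-t_2+\eps>0$: Markov's inequality then gives $\prob(\tau_*=T-t)\le\media[\tau_*]/(T-t_2+\eps)$. Hypotheses $(i)$, $(ii)$, $(iii)$ localise $b$ in a compact region, because $b\le\gamma\le\overline{\gamma}<r$ on $\overline{\cI}$, and on $K$ condition $(iii)$ sharpens the lower bound to $\partial_x\psi(t,x)\ge\alpha_r e^{-\mu T}\,\media[\tau_*\wedge\tau_r]$, with $\tau_r:=\inf\{s:X^x_s\ge r\}$. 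A one-dimensional exit-time/scale-function estimate (relying on nondegeneracy $\sigma>0$ and on the positive buffer $r-\overline{\gamma}$) then yields $\media[\tau_*]\le C\,\media[\tau_*\wedge\tau_r]$ uniformly on $K$, absorbing the residual terms into $\partial_x\psi$ and producing the key inequality with a constant $L$ depending on $K$, $\eps$, $\sigma$, $\mu$, $\alpha_r$, $r-\overline{\gamma}$.

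\emph{IFT and limit.} On the open set $\cC$ the PDE $\partial_t v+\cL v+h=0$ is uniformly parabolic in one dimension (as $\sigma>0$), so interior Schauder theory gives $v\in C^{1,2}(\cC)$ and hence $\psi\in C^1(\cC)$ with $\partial_x\psi>0$ strictly on $\cC\cap\{x<r\}$ by Proposition~\ref{b:existence} and $(iii)$. For every $\eta>0$ small enough that $\{\psi=\eta\}\cap([t_1,t_2-\eps]\times\reali)\subset K$, the implicit function theorem applied to $\psi-\eta=0$ provides a $C^1$ map $b_\eta:[t_1,t_2-\eps]\to\reali$ with
\begin{align*}
b_\eta'(t)=-\frac{\partial_t\psi(t,b_\eta(t))}{\partial_x\psi(t,b_\eta(t))},
\end{align*}
so $|b_\eta'(t)|\le L$ by the key inequality. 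As $\eta\downarrow 0$ the curves $b_\eta$ decrease pointwise to $b$, and the uniform Lipschitz bound is inherited by $b$ on $[t_1,t_2-\eps]$. The main obstacle is the uniform exit-time estimate $\media[\tau_*]\le C\,\media[\tau_*\wedge\tau_r]$ near the boundary: this quantifies the fact that paths of $X^x$ starting close to $b$ enter $\cS$ before escaping the buffer $r-\overline{\gamma}$ with overwhelming probability, and the uniformity requires carefully leveraging the gap provided by $(ii)$ and $(iii)$ together with the non-degeneracy of $\sigma$.
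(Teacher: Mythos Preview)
Your overall scaffold---approximate $b$ by the $\eta$-level curves $b_\eta$ of $\psi=v-f$, apply the implicit function theorem inside $\cC$ where $\psi\in C^1$ by interior parabolic regularity, bound $|b'_\eta|$ uniformly in $\eta$, and let $\eta\downarrow 0$---is exactly the paper's strategy. However, two genuine gaps remain.

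\textbf{Only one side of the Lipschitz bound is argued.} Your ``key inequality'' $\partial_t\psi\le L\,\partial_x\psi$ yields $b'_\eta\ge -L$, not $|b'_\eta|\le L$. For the upper bound $b'_\eta\le L'$ you need $\partial_t\psi\ge -L'\,\partial_x\psi$, i.e.\ a lower bound on $\partial_t\psi$ coming from $\underline{v}$ in \eqref{downvt}. In the paper this is a separate step (Step~3), and it is \emph{not} symmetric to the first one: it requires an a~priori lower bound for $b$ on $\overline{\cI_{\eps/2}}$, so that $\partial_t(h+m)(s,X_s)$ can be bounded from below along the stopped path. That lower bound on $b$ is only available \emph{after} you have established $b'_\eta\ge -\beta$ uniformly in $\eta$ and used $|b(t_0)|<\infty$ from hypothesis $(i)$. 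Your proposal never produces a lower bound for $b$, so the second half of the argument cannot start.

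\textbf{The residual estimate is not justified and the paper's mechanism is different.} You reduce matters to $\media[\tau_*]\le C\,\media[\tau_*\wedge\tau_r]$ and invoke an unspecified ``scale-function estimate''. Two problems: first, since condition $(iii)$ only holds on $\overline{\cI}\times(-\infty,r)$, the lower bound $\partial_x\psi\ge \alpha_r e^{-\mu T}\media[\tau_*\wedge\tau_r]$ requires $\tau_r$ to be the exit time from the \emph{time-space} rectangle $(t_1,t_2)\times(-\infty,r)$, not the purely spatial hitting time of $r$ you wrote. Second, and more seriously, the inequality $\media[\tau_*]\le C\,\media[\tau_*\wedge\tau_r]$ is not established and there is no standard scale-function identity that delivers it here, because $\tau_*$ is the hitting time of the unknown moving boundary $b$. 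The paper does \emph{not} prove this inequality; instead it decomposes both the numerator $\varphi(t,x)=\media_{t,x}[\tau_*-t]$ and the denominator $\hat w(t,x)$ (the lower bound for $\partial_x\psi$) at $\tau_r$ via the strong Markov property, and then bounds the ratio term by term. The crucial ingredient you are missing is that on the event $\{\tau_r<\tau_*,\ X_{\tau_r}=r\}$ one uses the strict positivity $\hat w_r:=\inf_{t\in\cI}\hat w(t,r)>0$ (which holds because the segment $\overline{\cI}\times\{r\}$ lies strictly inside $\cC$), while on $\{\tau_r<\tau_*,\ \tau_r=t_2\}$ one uses the time buffer $\tau_r-t\ge \eps/2$ for $t\le t_2-\eps/2$. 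This is the actual replacement for your unproved exit-time comparison.
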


\noindent First we need a technical lemma, whose proof will be given after that of the theorem.
\begin{lemma}\label{lem:tech} 
Under the same assumptions as in Theorem \ref{thm:lip-d1} we have 
\begin{itemize}
\item[(a)] for any $t'_1<t'_2$ such that $[t'_1,t'_2]\subseteq\overline \cI$ it holds $\big((t'_1,t'_2)\times\reali\big)\cap\cS\neq\varnothing$;
\item[(b)] $\lim_{x\to-\infty} (v-f)(t,x)=0$ for all $t\in \cI$.
\end{itemize}
\end{lemma}
\begin{proof}[{\bf Proof of Theorem \ref{thm:lip-d1}}]
We provide a full proof only for $T<+\infty$ but the same arguments hold for $T=+\infty$ up to simple minor changes.
We set $w:=v-f$ and we use the notation $\partial_x$ in place of $\partial_1$ and $\mu'(x):=\partial_x\mu(x)$. 

In this setting the diffusion is uniformly non degenerate. Therefore, letting $U$ be an open rectangle in the $(t,x)$-plane whose closure $\overline U$ is contained in $\cC$, and denoting its parabolic boundary by $\partial_P U$, we have that $v\in C^{1,2}(U)\cap C(\overline U)$ is the unique classical solution of the boundary value problem
\begin{align}
\partial_t u+\cL\,u=0,\quad\text{on $U$ and $u|_{\partial_P U}=v|_{\partial_P U}$}.
\end{align} 
For a proof of this standard result one can consult the proof of Theorem 2.7.7 in \cite{KS98}.
In particular, we will use below that $\partial_xv$ and $\partial_t v$ are continuous in $\cC$ (away from the boundary).

The free boundary $b$ is the zero-level set of $w$. Since $\partial_x w\ge 0$ (see \eqref{monot2}), $w$ is continuous and $(b)$ in Lemma \ref{lem:tech} holds, we can find $\delta>0$ sufficiently small so that the equation $w(t,x)=\delta$ has a solution $x=b_\delta(t)>-\infty$ for all $t\in\cI$. Further, by assumption $(ii)$ we have $\partial_x w(t,x)>0$ in $[\cI\times(-\infty,r)]\cap\cC$. Therefore the $\delta$-level set of $w$ is locally given by a function $b_\delta\in C(\cI)$. Moreover $b_\delta(t)>b(t)$ for all $t\in \cI$ (this trivially holds if $b(t)=-\infty$ for some $t\in\cI$). 

On $\cI$ the family $(b_\delta)_{\delta>0}$ decreases as $\delta\to0$ so that its limit $b_0$ exists (possibly equal to $-\infty$). The mapping $t\mapsto b_0(t)$, is upper semi-continuous (on the extended real line), as decreasing limit of continuous functions, and $b_0(t)\ge b(t)$. Since $w(t,b_\delta(t))=\delta$ it is clear that taking limits as $\delta\to0$ we get $w(t,b_0(t))=0$ and therefore $b_0(t)\le b(t)$ so that we conclude
\begin{align}\label{lim-p1}
\lim_{\delta\to0} b_\delta(t)=b(t)\quad \text{for $t\in \cI$}.
\end{align}

Since $v$ is continuously differentiable in $\cC$ and for all $t\in\cI$ it holds $(t,b_\delta(t))\in\cC$ with $\partial_x w(t,b_\delta(t))>0$, then the implicit function theorem gives 
\begin{align}\label{eq:gab1}
b'_\delta(t)=-\frac{\partial_t w(t,b_\delta(t))}{\partial_x w(t,b_\delta(t))},\qquad{t\in\cI}.
\end{align} 

Now we aim at showing that for arbitrary $\eps>0$, letting $\cI_\eps:=(t_1+\eps,t_2-\eps)$ we have 
\begin{eqnarray}
\label{unif-b00}&\text{$b_\delta$ is bounded from below on $\cI_\eps$, uniformly in $\delta$ and}&\\
\label{unif-b0}&\text{there exists $K_\eps>0$ such that $|b'_\delta(t)|\le K_\eps$ on $\cI_\eps$ uniformly in $\delta$}.&
\end{eqnarray}

If \eqref{unif-b00}--\eqref{unif-b0} hold, then by Ascoli-Arzel\`a's theorem we can extract a sequence $(b_{\delta_j})_{j\ge 1}$ such that $b_{\delta_j}\to g$ uniformly on $\cI_\eps$ as $j\to\infty$, where $g$ is Lipschitz continuous with constant $K_\eps$. Uniqueness of the limit implies $g=b$ on $\cI_\eps$ and therefore $b$ is (bounded) Lipschitz on $\cI_\eps$ with constant $K_\eps$.

The rest of the proof will be devoted to verifying \eqref{unif-b00}--\eqref{unif-b0} or equivalently to finding a uniform bound for $b_\delta$ on $\cI_\eps$ and a uniform bound for
\begin{align*}
|b'_\delta(t)|=\frac{|\partial_t w(t,b_\delta(t))|}{\partial_x w(t,b_\delta(t))}\qquad \text{for~$t\in\cI_\eps$.}
\end{align*}
The proof is divided in steps.
\vspace{+5pt}

\emph{Step 1. (Upper bound for $b(t)$).} Due to $(i)$ we have $b(t)<r$ for $t\in\overline{\cI}$ for $r$ as in $(ii)$.
\vspace{+5pt}

\emph{Step 2. (Lower bound for $b(t)$ and $b'(t)$).} 
For any $(t,x)\in[0,T]\times\reali$ and any stopping time $\tau\in[0,T-t]$ we have
\begin{align}\label{ft}
\partial_t f(t,x)=\media\left[\partial_tf(t+\tau,X^{x}_\tau)-\int_0^\tau\partial_tm(t+s,X^{x}_s)ds\right].
\end{align}
Take $(t,x)\in[0,T)\times\reali$ and $\tau_*=\tau_*(t,x)$. Using \eqref{vtup1}, condition \ref{D}, \eqref{monot2} and $\mu'(x)\ge -\mu$ we easily obtain 
\begin{align}\label{eq:gab1.1}
\partial_t w(t,x)\le& \media\Big[\int_0^{\tau_*}\partial_t(h+m)(t+s,X^{x}_s)ds\Big]+c'\,\prob(\tau_*=T-t)\nonumber\\
\le & c\left((1+(T-t)^{-1})\media[\tau_*]+\media\Big[\int_0^{\tau_*}\partial_x(h+m)(t+s,X^{x}_s)ds\Big]\right)\\
\le &c\left((1+(T-t)^{-1})\media[\tau_*]+e^{\mu(T-t)}\partial_x w(t,x)\right)\nonumber
\end{align}
where $c$ and $c'$ are constants and in the second line we have also used Markov inequality as in \eqref{eq:markov}.
Using \eqref{eq:gab1.1} in \eqref{eq:gab1} gives us a lower bound for $b'_\delta$ of the form
\begin{align}\label{lbb1}
b'_\delta(t)\ge -C\left(1 + \frac{\varphi(t,b_\delta(t))}{\partial_x w(t,b_\delta(t))}\right)\qquad\text{for~$t\in\cI$}
\end{align}
where $\varphi(t,x):=\media[\tau_*(t,x)]=\media_{t,x}[\tau_*-t]$. 
Next we want to find a bound for $(\varphi/\partial_x w) (t,b_\delta(t))$.

Recalling \eqref{monot2} we denote $\hat{w}$ the function
\begin{align}\label{w2}
\hat{w}(t,x)=\media_{t,x}\left[\int_t^{\tau_*}e^{\int_t^s\mu'(X_u)du}\partial_x(h+m)(s,X_s)ds\right]
\end{align}
so that $\hat{w}(t,x)\le \partial_x w(t,x)$. It is important to notice that 
\begin{align}\label{inf}
\hat{w}_r:=\inf_{t\in\cI}\hat{w}(t,r)>0
\end{align}
since the segment $[t_1,t_2]\times\{r\}$ is strictly above the optimal boundary and $\partial_x(h+m)>0$ in a neighborhood of the segment (see also Lemma \ref{lem:wxLB}).

Fix $t\in\cI_{\eps/2}=(t_1+\eps/2,t_2-\eps/2)$ and, to simplify notation, set $x_\delta:=b_\delta(t)$ and $\tau_\delta:=\tau_*(t,x_\delta)$. Let 
\begin{align}
\tau_r:=\inf\{s\ge t :(s,X_s)\notin (t_1,t_2)\times(-\infty,r)\}\qquad\text{$\prob_{t,x_\delta}$-a.s.}
\end{align}
and notice that $\tau_r<T$, $\prob_{t,x_\delta}$-a.s.~since $\overline\cI\subset[0,T)$. Tower property of conditional expectation and \eqref{w2} give 
\begin{align*}
\hat{w}&(t,x_\delta)\nonumber\\
=&\media_{t,x_\delta}\Big[\int_t^{\tau_r\wedge\tau_\delta}e^{\int_t^s\mu'(X_u)du}\partial_x(h+m)(s,X_s)ds
+\mathds{1}_{\{\tau_r<\tau_\delta\}}\int_{\tau_r}^{\tau_\delta}e^{\int_t^s\mu'(X_u)du}\partial_x(h+m)(s,X_s)ds\Big]\nonumber\\
=&\media_{t,x_\delta}\Big[\int_t^{\tau_r\wedge\tau_\delta}e^{\int_t^s\mu'(X_u)du}\partial_x(h+m)(s,X_s)ds\nonumber\\
&\hspace{+25pt}+\mathds{1}_{\{\tau_r<\tau_\delta\}}e^{\int_t^{\tau_r}\mu'(X_u)du}\media_{t,x_\delta}\Big(\int_{\tau_r}^{\tau_r+\tau_\delta\circ\theta_{\tau_r}}e^{\int_t^s\mu'(X_u)du}\partial_x(h+m)(s,X_s)ds\Big|\cF_{\tau_r}\Big)\Big],
\end{align*}
where we recall that $\theta_\cdot$ is the shift operator on the canonical space.
Strong Markov property gives
\begin{align}\label{w3}
\hat{w}&(t,x_\delta)\nonumber\\
=&\media_{t,x_\delta}\Big[\int_t^{\tau_r\wedge\tau_\delta}e^{\int_t^s\mu'(X_u)du}\partial_x(h+m)(s,X_s)ds\nonumber\\
&\hspace{+25pt}+\mathds{1}_{\{\tau_r<\tau_\delta\}}e^{\int_t^{\tau_r}\mu'(X_u)du}\media_{\tau_r,X_{\tau_r}}\Big(
\int_{\tau_r}^{\tau_\delta}e^{\int^s_{\tau_r}\mu'(X_u)du}\partial_x(h+m)(s,X_s)ds\Big)\Big]\\
=&\media_{t,x_\delta}\Big[\int_t^{\tau_r\wedge\tau_\delta}e^{\int_t^{s}\mu'(X_u)du}\partial_x(h+m)(s,X_s)ds+\mathds{1}_{\{\tau_r<\tau_\delta\}}e^{\int_t^{\tau_r}\mu'(X_u)du}\hat{w}(\tau_r,X_{\tau_r})\Big].\nonumber
\end{align}
For $\hat{w}$ we therefore have the following lower bound 
\begin{align}\label{lw1}
\hat{w}(t,x_\delta)\ge C'\left(\media_{t,x_\delta}\left[\alpha_r(\tau_\delta\wedge\tau_r-t)+ \mathds{1}_{\{\tau_r<\tau_\delta\}}\hat{w}(\tau_r,X_{\tau_r})\right]\right)
\end{align}
with $C'=e^{-\mu T}$.
The same argument applied to $\varphi$ gives
\begin{align}\label{phi}
\varphi(t,x_\delta)=\media_{t,x_\delta}\left[(\tau_\delta\wedge\tau_r-t)+\mathds{1}_{\{\tau_r<\tau_\delta\}}\varphi(\tau_r,X_{\tau_r})\right].
\end{align}

Now from \eqref{phi} and \eqref{lw1}, and by noticing that $0\le \varphi(t,x)\le T$ we obtain
\begin{align}\label{lip2}
0\le& \frac{\varphi(t,x_\delta)}{\partial_x w(t,x_\delta)}\le\frac{\varphi(t,x_\delta)}{\hat{w}(t,x_\delta)}\nonumber\\
\le& \frac{1}{C'\alpha_r}+\frac{\media_{t,x_\delta}\left[\mathds{1}_{\{\tau_r<\tau_\delta\}}\varphi(\tau_r,X_{\tau_r})\right]}{C'\left(\media_{t,x_\delta}\left[\alpha_r(\tau_\delta\wedge\tau_r-t)+ \mathds{1}_{\{\tau_r<\tau_\delta\}}\hat{w}(\tau_r,X_{\tau_r})\right]\right)}\nonumber\\
\le& C_r\left(1+\frac{\prob_{t,x_\delta}(\tau_r<\tau_\delta,\tau_r<t_2)+\prob_{t,x_\delta}(\tau_r<\tau_\delta,\tau_r=t_2)}{\media_{t,x_\delta}\left[\alpha_r(\tau_\delta\wedge\tau_r-t)+ \mathds{1}_{\{\tau_r<\tau_\delta\}}\hat{w}(\tau_r,X_{\tau_r})\right]}\right),
\end{align}
where $C_{r}:=(\alpha_r^{-1}\vee T)/C'$. Since $t\in\cI_{\eps/2}=(t_1+\eps/2,t_2-\eps/2)$ we have
\begin{align}\label{lip3}
&\frac{\prob_{t,x_\delta}(\tau_r<\tau_\delta,\tau_r<t_2)}{\media_{t,x_\delta}\left[\alpha_r(\tau_\delta\wedge\tau_r-t)+ \mathds{1}_{\{\tau_r<\tau_\delta\}}\hat{w}(\tau_r,X_{\tau_r})\right]}\nonumber\\
&\le\frac{\prob_{t,x_\delta}(\tau_r<\tau_\delta,\tau_r<t_2)}{\media_{t,x_\delta}\left[\mathds{1}_{\{\tau_r<\tau_\delta\}\cap\{\tau_r<t_2\}}\hat{w}(\tau_r,r)\right]}\le\frac{\prob_{t,x_\delta}(\tau_r<\tau_\delta,\tau_r<t_2)}{\hat{w}_r\prob_{t,x_\delta}\left(\tau_r<\tau_\delta,\tau_r<t_2\right)}\le\frac{1}{\hat{w}_r}
\end{align}
by using that $\mathds{1}_{\{\tau_r<t_2\}}X_{\tau_r}=r$ and recalling \eqref{inf}. Similarly, for $t\in\cI_{\eps/2}$, we have
\begin{align}\label{lip4}
&\frac{\prob_{t,x_\delta}(\tau_r<\tau_\delta,\tau_r=t_2)}{\media_{t,x_\delta}\left[\alpha_r(\tau_\delta\wedge\tau_r-t)+ \mathds{1}_{\{\tau_r<\tau_\delta\}}\hat{w}(\tau_r,X_{\tau_r})\right]}\nonumber\\
&\le\frac{\prob_{t,x_\delta}(\tau_r<\tau_\delta,\tau_r=t_2)}{\media_{t,x_\delta}\left[\mathds{1}_{\{\tau_r<\tau_\delta\}\cap\{\tau_r=t_2\}}\alpha_r(\tau_\delta\wedge\tau_r-t)\right]}\le\frac{1}{\alpha_r(t_2-t)}\le \frac{2}{\alpha_r\eps}.
\end{align}
Now plugging the last two estimates back into \eqref{lip2} and recalling \eqref{lbb1} we finally conclude  
\begin{align}\label{lo-bp}
b'_\delta(t)\ge - C\left(1+C_r\left(1+\hat{w}_r^{-1} +2(\alpha_r\eps)^{-1}\right)\right)=:-\beta_{\eps,r}\quad\text{for all $t\in\overline{\cI_{\eps/2}}$.}
\end{align}

Thanks to $(a)$ in Lemma \ref{lem:tech} we can find $t_0\in\cI$, arbitrarily close to $t_1$ and such that $|b(t_0)|<\infty$. So with no loss of generality we assume $t_0=t_1+\eps/2$ and $b(t_0)>-\infty$. Since the bound in \eqref{lo-bp} is uniform in $\delta$, then \eqref{lim-p1} implies that $b(t)\ge b(t_0)-\beta_{\eps,r}|t-t_0|$ for all $t\in\overline{\cI_{\eps/2}}$ and proves \eqref{unif-b00}. Hence, there exits $r^\eps<r$ such that $b(t)\in(r^\eps,r)$ for all $t\in\overline{\cI_{\eps/2}}$. This fact will be used in the next step of the proof. 
\vspace{+5pt}

\emph{Step 3. (Upper bound for $b'(t)$).} It remains to find an upper bound for $b'_\delta$ on $\overline{\cI_\eps}$ which is uniform in $\delta$. For that it is convenient to denote
\begin{align}\label{gtilde}
\widetilde{g}(x):=|h(T,x)+n(x)|+2|\partial_tf(T,x)|\quad\text{for $x\in\reali$.}
\end{align} 
and recall that $\partial_t v(t,x)\ge \underline{v}(t,x)$ given in \eqref{downvt}. Using again \eqref{ft} we immediately find
\begin{align}\label{ov-w}
\partial_t w(t,x)\ge \overline{w}(t,x):=\media\left[\int_0^{\tau_*}\partial_t(h+m)(t+s,X^x_s)ds-\mathds{1}_{\{\tau_*=T-t\}}\widetilde{g}(X^x_{T-t})\right]
\end{align}
for~$(t,x)\in[0,T]\times\reali$.
Now, for $t\in\cI_\eps$ we set 
\begin{align}
\tau'_r:=\inf\{s\ge t:(s,X_s)\notin (t_1+\eps,t_2-\eps/2)\times(-\infty,r)\}\qquad\text{$\prob_{t,x_\delta}$-a.s.,}
\end{align}
and, using the strong Markov property as in step 2 above, we find
\begin{align}\label{4.28}
\overline{w}(t,x_\delta)=\media_{t,x_\delta}\left[\int_t^{\tau'_r\wedge\tau_\delta}\partial_t(h+m)(s,X_s)ds+\mathds{1}_{\{\tau'_r<\tau_\delta\}}\overline{w}(\tau'_r,X_{\tau'_r})\right].
\end{align}

Since $b(t)\in[r^\eps,r]$ for $t\in\overline{\cI_{\eps/2}}$ then under $\prob_{t,x_\delta}$ we have $X_{s}\in[r^\eps,r]$ for all $s\in(t,\tau'_r\wedge\tau_\delta)$ and therefore there exists $\nu_\eps>0$ such that $\partial_t(h+m)(s,X_{s})\ge -\nu_\eps$ for all $s\in(t,\tau'_r\wedge\tau_\delta)$. On the other hand, from the definition of $\overline w$ in \eqref{ov-w} and properties of $\partial_t(h+m)$ and $\widetilde{g}$ (see Assumption \ref{ass:fgh}) it also follows
\begin{align}\label{sup}
\overline{w}_{r,\eps}:=\sup_{t\in\cI}|\overline{w}(t,r)|+\sup_{x\in[r^\eps,r]}|\overline{w}(t_2-\eps/2,x)|<\infty.
\end{align}
In conclusion, from \eqref{4.28} we have
\begin{align}
\overline{w}(t,x_\delta)\ge -\nu_\eps\media_{t,x_\delta}\left[(\tau_\delta\wedge\tau'_r-t)\right]-\overline{w}_{r,\eps}\prob_{t,x_\delta}(\tau'_r<\tau_\delta).
\end{align}

Using \eqref{w3} with $\tau_r$ replaced by $\tau'_r$ we obtain \eqref{lw1} with $\tau_r$ replaced by $\tau'_r$. Hence, recalling \eqref{eq:gab1}, for~$t\in\cI_\eps$ we have
\begin{align}
b'_\delta(t)\le &\frac{\nu_\eps\media_{t,x_\delta}\left[(\tau_\delta\wedge\tau'_r-t)\right]+\overline{w}_{r,\eps}\prob_{t,x_\delta}(\tau'_r<\tau_\delta)}{C'\left(\media_{t,x_\delta}\left[\alpha_r(\tau_\delta\wedge\tau'_r-t)+ \mathds{1}_{\{\tau'_r<\tau_\delta\}}\hat{w}(\tau'_r,X_{\tau'_r})\right]\right)}\nonumber\\
\le &\frac{1}{C'}\left(\frac{\nu_\eps}{\alpha_r}+\frac{\overline{w}_{r,\eps}\prob_{t,x_\delta}(\tau'_r<\tau_\delta)}{\media_{t,x_\delta}\left[\alpha_r(\tau_\delta\wedge\tau'_r-t)+ \mathds{1}_{\{\tau'_r<\tau_\delta\}}\hat{w}(\tau'_r,X_{\tau'_r})\right]}\right).
\end{align}
The last term on the right-hand side above may be estimated by using the same arguments as in \eqref{lip3} and \eqref{lip4}, upon noticing that the argument in \eqref{lip4} carries over to this case since for $t\in\cI_\eps$ we have $\mathds{1}_{\{\tau'_r<\tau_\delta\}\cap\{\tau'_r=t_2-\eps/2\}}(\tau_\delta\wedge\tau'_r-t)\ge \eps/2$. 

Therefore we conclude 
\begin{align}\label{up-bp}
b'_\delta(t)\le C'_r\left(1+2(\alpha_r\eps)^{-1}+\hat{w}_r^{-1}\right) 
\end{align}
where $C'_r=[(\nu_\eps/\alpha_r)\vee\, \overline{w}_{r,\eps}]/C'$. Now \eqref{up-bp} and \eqref{lo-bp} imply \eqref{unif-b0} and the proof is complete.
\end{proof}
\begin{proof}[{\bf Proof of Lemma \ref{lem:tech}}]
We set $w:=v-f$ and use the notation $\partial_x$ in place of $\partial_1$. By a simple application of Dynkin's formula we can write $w$ as
\begin{align}\label{w}
w(t,x)=\media\left[\int_0^{\tau_*}(h+m)(t+s,X^x_s)ds+\mathds{1}_{\{\tau_*=T-t\}}\big(g(X_{\tau_*})-f(t+\tau_*,X^x_{\tau_*})\big)\right].
\end{align}
For future reference we notice that under $(ii)$ of Theorem \ref{thm:lip-d1} we have $\gamma\in C^1(\overline{\cI})$ by the implicit function theorem, since $(h+m)(t,\gamma(t))=0$. Therefore 
\[
\underline\gamma:=\inf_{t\in\overline\cI}\gamma(t)>-\infty.
\]
\vspace{+4pt}

{\em Step 1.} Here we prove $(a)$ by contradiction. Assume that we can find $t'_1<t'_2$ in $\overline \cI$ such that $\big((t'_1,t'_2)\times\reali\big)\cap\cS=\varnothing$. Fix $t\in(t'_1,t'_2)$, take $x\le \underline\gamma$ and define
\[
\rho_\gamma(t,x):=\inf\{s\ge 0\,:\,X^x_s\ge \underline \gamma \}\wedge(t'_2-t).
\]
By assumption we have $\tau_*(t,x)\ge (t'_2-t)$, hence $\tau_*(t,x)\ge \rho_\gamma(t,x)$, $\prob$-a.s. Recall that $(h+m)(s,y)\le 0$ for $y\le \gamma(s)$, $s\in[0,T]$ and notice that 
\begin{align}\label{boundhm}
(h+m)(s,y)=&(h+m)(s,\underline \gamma)-\int_y^{\underline \gamma}\partial_x(h+m)(s,z)dz\le -\alpha_r(\underline \gamma-y),
\end{align}
thanks to $(ii)$ in Theorem \ref{thm:lip-d1}, for $s\in\cI$ and $y\le \underline \gamma$. Denote $\rho_\gamma=\rho_\gamma(t,x)$. Similarly to, e.g., \eqref{w3}, we can use the tower property of conditional expectation and strong Markov property in \eqref{w}, along with \eqref{boundhm}. This gives 
\begin{align}\label{wtech}
w(t,x)=&\media\left[\int_0^{\rho_\gamma}(h+m)(t+s,X^x_s)ds+w(t+\rho_\gamma,X^x_{\rho_\gamma})\right]\notag\\
\le&-\alpha_r\media\left[\int_0^{\rho_\gamma}\big(\underline \gamma- X^x_s\big)ds\right]+ w_\gamma,
\end{align}
where $w_\gamma:=\sup_{t\le s\le t'_2}w(s,\underline \gamma)<+\infty$, since $w$ is continuous, and we used that $w(t+\rho_\gamma,X^x_{\rho_\gamma})\le w_\gamma$ since $\partial_x w\ge 0$ by \eqref{monot2}. Finally, letting $x\downarrow-\infty$ we reach a contradiction because $\rho_\gamma(t,x)\uparrow(t'_2-t)$ and the first term in the last expression of \eqref{wtech} goes to $-\infty$.
\vspace{+4pt}

{\em Step 2}. Here we prove $(b)$. Pick any $t\in\cI$. From the previous step we know that we can find $t'\in\cI$ with $t'>t$ such that $|b(t')|<+\infty$. In particular, this implies that $\{t'\}\times (-\infty,b(t')]\subseteq\cS$. Now, for $x\le\underline\gamma\wedge b(t')$ let 
\[
\rho'_\gamma(t,x):=\inf\{s\ge 0\,:\, X^x_s\ge \underline \gamma \wedge b(t')\}
\] 
and notice that $\rho'_\gamma(t,x)\wedge\tau_*(t,x)$ is smaller than the first time the process $(t+s,X^x_s)$ leaves the the set $[t,t')\times (-\infty,\underline \gamma\wedge b(t'))$. Hence, $\rho'_\gamma(t,x)\wedge\tau_*(t,x)\le (t'-t)$, $\prob$-a.s.

Using the tower property of conditional expectation and strong Markov property in \eqref{w}, we obtain
\begin{align*}
w(t,x)=&\media\left[\int_0^{\rho'_\gamma\wedge\tau_*}(h+m)(t+s,X^x_s)ds+w(t+\rho'_\gamma\wedge\tau_*,X^x_{\rho'_\gamma\wedge\tau_*})\right]\notag\\
\le&\media\left[\mathds{1}_{\{\rho'_\gamma<\tau_*\}}w(t+\rho'_\gamma,X^x_{\rho'_\gamma})\right],
\end{align*}
where we have also used that $(h+m)(s,y)\le 0$ for $y\le \gamma(s)$, and $\mathds{1}_{\{\tau_*\le \rho'_\gamma\}}w(t+\tau_*,X^x_{\tau_*})=0$, $\prob$-a.s. Next we notice that 
\[
\{\rho'_\gamma<\tau_*\}=\{\rho'_\gamma<\tau_*,\rho'_\gamma<t'-t\}
\] 
because $\{\rho'_\gamma<\tau_*,\rho'_\gamma\ge t'-t\}=\varnothing$. Moreover, we recall that $w(t+\rho'_\gamma,X^x_{\rho'_\gamma})\le \sup_{t\le s\le t'}w(s,\underline \gamma)=:w'_\gamma<+\infty$ as $\partial_x w\ge 0$, by \eqref{monot2}. Then we obtain 
\begin{align*}
w(t,x)\le&w'_\gamma \prob\big(\rho'_\gamma(t,x)<t'-t\big)=w'_\gamma \prob\big(\sup_{0\le s\le t'-t} X^x_s>\underline\gamma\wedge b(t')\big).
\end{align*}
Letting $x\to-\infty$ gives us $(b)$, as claimed.
\end{proof}

\begin{remark}\label{rem:C1}
One can use local Lipschitz continuity of $t\mapsto b(t)$ and the law of iterated logarithm to show that $\tau_*$ is actually equal to the first time $X$ goes strictly below the boundary, i.e., for all $(t,x)\in[0,T)\times\reali$, it holds $\prob$-a.s.
\begin{align*}
\inf\{s\ge0\,:\,X^x_s\le b(t+s)\}\wedge (T-t)=\inf\{s\ge0\,:\,X^x_s< b(t+s)\}\wedge (T-t).
\end{align*}
This is an important fact that can be used to prove that $(t,x)\mapsto\tau_*(t,x)$ is continuous $\prob$-a.s., and it is zero at all boundary points, hence implying $v\in C^{1,1}([0,T)\times\reali)$~(see for example \cite[Sec.~5 and 6]{DeAE16}).
\end{remark}

\subsection{Lipschitz boundary for $d\ge 2$}

Lipschitz regularity for optimal boundaries when $d\ge 2$ requires slightly stronger assumptions on the functions $f,g,h$ and $\mu$ which, however, are in line with those originally used in \cite{SS91}. We give the result under two different sets of assumptions, namely conditions \ref{F} and \ref{G} below. The main difference between the two is that in \ref{F} we do not need a positive lower bound for the quantity $\partial_1(h+m)$ but we need to compensate by imposing stronger bounds on the remaining quantities. Under conditions \ref{G} we have a uniform lower bound on $\partial_1(h+m)$ so that other inequalities may be relaxed. 

One should compare \ref{G} to the assumptions in \cite{SS91} and notice that we are in a similar setting (see also Remark \ref{rem:comp} below for further details). We stress here that our theorems below \emph{do not require uniform ellipticity} of the operator $\sigma\sigma^\top$ and therefore could not be obtained by PDE methods employed in \cite{SS91}. We illustrate an application of our results and methodology in Example 2 of Section \ref{sec:examples}, which addresses the case of a degenerate diffusion.

The main idea of the proofs below is again to use the implicit function theorem as in the case of $d=1$ (Theorem \ref{thm:lip-d1}). However here we cannot rely upon continuity of $\nabla_x v$ and $\partial_t v$ due to the lack of uniform ellipticity of the parabolic operator $\partial_t+\cL$ and therefore the arguments from Theorem \ref{thm:lip-d1} do not carry over.
To overcome this additional difficulty we provide some notation and some technical lemmas, and throughout the section we use 
\begin{description}
\item[(E) Regularity $\partial_t m$ and $\partial_k m$. \namedlabel{E}{(E)}] Let $f\in C^{2,3}([0,T]\times\reali^d)$ so that $m\in C^1([0,T]\times\reali^d)$. For any compact $K\subseteq [0,T]\times \reali^d$ we have 
\begin{align*}
\sup_{(t,x)\in K}\media\left[\int_0^{T-t}|\partial_k m(t+s,X^x_s)|^2+|\partial_t m(t+s,X^x_s)|^2ds\right]<+\infty.
\end{align*}
\end{description}

From now on we denote $w:=v-f$. An application of Dynkin formula to $f(t,X_t)$ gives
\begin{align}
w(t,x)=\sup_{0\le \tau\le T-t}\media\left[\int_0^\tau (h+m)(t+s,X^x_s)ds+\mathds{1}_{\{\tau=T-t\}}\left(g(X^x_{T-t})-f(T,X^x_{T-t})\right)\right].
\end{align}
For $k=1,\ldots d$ we introduce the functions 
\begin{align}\label{eq:wcirc-k}
w^\circ_k(t,x):=&\media\Big[\int_0^{\tau_*}\langle\nabla_x(h+m)(t+s,X^x_s),\partial_kX^x_s\rangle ds\nonumber\\
&\hspace{+25pt}+\mathds{1}_{\{\tau_*=T-t\}}\langle\nabla_x\left(g(X^x_{T-t})-f(T,X^x_{T-t})\right),\partial_kX^x_{T-t}\rangle\Big]
\end{align}
and in particular under condition \ref{B} we notice that 
\begin{align}\label{eq:wcirc-1}
w^\circ_1(t,x)=&\media\Big[\int_0^{\tau_*}\partial_1(h+m)(t+s,X^x_s)\partial_1X^{x,1}_s ds\nonumber\\
&\hspace{+25pt}+\mathds{1}_{\{\tau_*=T-t\}}\partial_1\left(g(X^x_{T-t})-f(T,X^x_{T-t})\right)\partial_1X^{x,1}_{T-t}\Big].
\end{align}
Thanks to Theorem \ref{thm:lip} we have $\partial_1 w = w^\circ_1$ almost everywhere and therefore under condition \ref{A$_1$}, for a.e.~$(t,x)\in[0,T]\times\reali^d$, we also have
\begin{align}\label{monot2b}
\partial_1w(t,x)\ge \media\Big[\int_0^{\tau_*}\partial_1(h+m)(t+s,X^x_s)\partial_1X^{x,1}_s ds\Big].
\end{align}
In analogy with \eqref{upvt} and \eqref{downvt} we also introduce
\begin{align}
\label{eq:wtup}\overline w(t,x):=&\media\left[\int_0^{\tau_*}\partial_t(h+m)(t+s,X^x_s)ds-\mathds{1}_{\{\tau_*=T-t\}}\left((h+\partial_tf)(T,X^x_{T-t})+n(X^x_{T-t})\right)\right]\\[+4pt]
\label{eq:wtdwn}\underline w(t,x):=&\media\left[\int_0^{\tau_*}\partial_t(h+m)(t+s,X^x_s)ds-\mathds{1}_{\{\tau_*=T-t\}}\left|(h+\partial_tf)(T,X^x_{T-t})+n(X^x_{T-t})\right|\right].
\end{align}

The next technical lemma will be used in the proofs of Proposition \ref{prop:bdelta} below.

\begin{lemma}\label{lem:wxLB}
Let $\cO$ be a bounded open set in $[0,T]\times\reali^d$ and $K\subset \cO\cap\cC$ a compact. Then if \ref{A$_2$} and \ref{B} hold, and $\partial_1(h+m)$ is continuous and strictly positive on $\cO$, we have
\begin{align}
\inf_{(t,x)\in K}w^\circ_1(t,x)>0
\end{align}
\end{lemma}
\begin{proof}
Letting $\tau_{0}:=\inf\{s\ge t\,:\,(s,X_s)\notin \cO\cap\cC\}$, $\prob_{t,x}$-a.s., we easily obtain from \eqref{eq:wcirc-1}, \ref{A$_2$} and \ref{B} that 
\begin{align}\label{eq:t1}
w^\circ_1(t,x)\ge c_\cO\media_{t,x}\left[\tau_0-t\right]
\end{align}
for some $c_\cO>0$ only depending on $\cO$. Choose now an arbitrary function $\psi\in C^2([0,T]\times\reali^d)$ such that $\psi(\cdot)\ge 1$ on the complement of $\cO\cap\cC$ (denoted $(\cO\cap\cC)^c$) and $\psi(\cdot)\le \tfrac{1}{2}$ on $K$. With no loss of generality $\cL \psi(\cdot)\le c'_\cO$ on $\cO$ for some $c'_\cO>0$ since $\psi\in C^2(\cO)$ and $\cO$ is bounded. Then by an application of Dynkin formula, for any $(t,x)\in K$ we obtain
\begin{align}\label{eq:t2}
\media_{t,x}\left[\tau_0-t\right]=&\,\media_{t,x}\left[\int_t^{\tau_0}dt\right]\ge \frac{1}{c'_\cO}\media\left[\int_t^{\tau_0}\cL\psi (s,X_s)dt\right]\nonumber\\
=&\,\frac{1}{c'_\cO}\left(\media_{t,x}\left[\psi(\tau_0,X_{\tau_0})\right]-\psi(t,x)\right)\ge \frac{1}{2c'_\cO}
\end{align}
where the final inequality uses that $\prob_{t,x}\left[(\tau_0,X_{\tau_0})\in(\cO\cap\cC)^c\right]=1$ so that $\psi(\tau_0,X_{\tau_0})\ge 1$, while $\psi(t,x)\le \tfrac{1}{2}$ on $K$. Plugging \eqref{eq:t2} in \eqref{eq:t1} concludes the proof.
\end{proof}

From now on we let $$c:[0,T]\times\reali^{d-1}\to\reali$$ be a given arbitrary function.
The next lemma is an application of the chain rule and its proof is omitted. 
\begin{lemma}\label{lem:DXb}
Assume that $c$ is differentiable at $(t,x_2,\ldots x_d)$ and denote $$c:=(c(t,x_2,\ldots x_d),x_2,\ldots x_d).$$ 
Then under condition \ref{B} we have $\prob$-a.s.
\begin{align}
\label{DXb0}\partial_t(X^{c,1}_s)&=\partial_t c(t,x_2,\ldots x_d)\partial_1X^{x,1}_s|_{x=c}\\[+4pt]
\label{DXb1}\partial_k (X^{c,1}_s)&=\partial_k c(t,x_2,\ldots x_d)\partial_1 X^{x,1}_s|_{x=c}+\partial_k X^{x,1}_s|_{x=c},\\[+4pt]
\label{DXb2}\partial_k (X^{c,j}_s)&=\partial_k X^{x,j}_s|_{x=c},\quad\text{and}\quad \partial_t(X^{c,j}_s)=0
\end{align}
for all $s\in[0,T]$ and all $k,j\ge 2$. 
\end{lemma}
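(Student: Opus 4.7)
This lemma is a direct application of the chain rule to the composition $(t,x_2,\ldots,x_d) \mapsto X^{c}_s$, where the $\reali^d$-valued argument has coordinates $c^1 = c(t,x_2,\ldots,x_d)$ and $c^j = x_j$ for $j \geq 2$. The plan is simply to unpack this chain rule and then invoke condition \ref{B} to kill the vanishing terms. The elementary partial derivatives of the coordinates of $c$ are $\partial_t c^1 = \partial_t c$, $\partial_t c^j = 0$ for $j \geq 2$, $\partial_k c^1 = \partial_k c$ for $k \geq 2$, and $\partial_k c^j = \delta_{kj}$ for $k, j \geq 2$.

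Since $\mu \in C^1(\reali^d;\reali^d)$, the flow $y \mapsto X^y_s$ is differentiable for each $s$ (as recalled in Section \ref{sec:setup}), and by assumption $c$ is differentiable at $(t,x_2,\ldots,x_d)$. I would therefore apply the chain rule to obtain, for each $j \in \{1,\ldots,d\}$ and each $k \geq 2$,
\[
\partial_t X^{c,j}_s \;=\; (\partial_1 X^{y,j}_s)\big|_{y=c}\,\partial_t c, \qquad \partial_k X^{c,j}_s \;=\; (\partial_1 X^{y,j}_s)\big|_{y=c}\,\partial_k c + (\partial_k X^{y,j}_s)\big|_{y=c},
\]
which reflects the fact that only $c^1$ depends on $t$, and that $x_k$ enters $c$ both directly via $c^k$ and indirectly via $c^1$. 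Setting $j = 1$ yields \eqref{DXb0} and \eqref{DXb1} with no further work. Setting $j \geq 2$ and invoking the identity $\partial_1 X^{y,j}_s \equiv 0$ recorded in condition \ref{B} eliminates the first term in both of the two formulas above, leaving exactly \eqref{DXb2}.

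There is no real obstacle: the entire content of the lemma is the bookkeeping of the double dependence on $x_k$ for $k \geq 2$ that produces the two-term formula \eqref{DXb1}, and the vanishing of $\partial_1 X^{y,j}_s$ for $j \geq 2$ that collapses \eqref{DXb2} to a single term. Both ingredients are built directly into the standing hypotheses, so the proof is a few lines of chain rule and is naturally omitted.
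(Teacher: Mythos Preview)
Your proposal is correct and matches the paper's approach exactly: the paper explicitly states that the lemma ``is an application of the chain rule and its proof is omitted,'' and your write-up carries out precisely that chain-rule bookkeeping. There is nothing to add.
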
 
Notice that \eqref{DXb2} holds because the $j$-th component of $X^x$, with $j\ge 2$, is not affected by changes in the initial point $x_1=c(t,x_2,\ldots x_d)$ of the first component $X^{x,1}$, due to condition \ref{B}. With a slight abuse of notation we are often going to use $\partial_k X^{c,j}=\partial_k X^{x,j}|_{x=c}$ when no confusion shall arise. For future reference we also give a straightforward corollary.
\begin{corollary}\label{cor:DXb}
Let $G\in C^{1,1}([0,T]\times\reali^d)$ and $F\in C^1(\reali^d)$, then under the assumptions of Lemma \ref{lem:DXb} and with the same notation we have $\prob$-a.s., for all $s\in[0,T-t]$,
\begin{align*}
\frac{\partial}{\partial t}G(t+s,X^{c}_s)=&\partial_t G(t+s,X^{c}_s)+\partial_1 G(t+s,X^{c}_s)\partial_1X^{x,1}|_{x=c}\partial_t c(t,x_2,\ldots x_d),\\
\frac{\partial}{\partial x_k} F(X^{c}_s)=&\langle\nabla_x F(X^{c}_s),\partial_k X^{x}_s|_{x=c} \rangle +\partial_1 F(X^{c}_s)\partial_1 X^{x,1}_s|_{x=c} \partial_k c(t,x_2,\ldots x_d).
\end{align*}
\end{corollary}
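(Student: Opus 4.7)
The plan is to apply the classical multivariate chain rule to the compositions $(t,x_2,\ldots,x_d)\mapsto G(t+s,X^{c(t,x_2,\ldots,x_d),x_2,\ldots,x_d}_s)$ and $(x_2,\ldots,x_d)\mapsto F(X^{c(t,x_2,\ldots,x_d),x_2,\ldots,x_d}_s)$, and then to substitute the explicit formulas for the derivatives of $X^c$ provided by Lemma \ref{lem:DXb}. Since $G\in C^{1,1}$, $F\in C^1$, and $x\mapsto X^x_s$ is $\prob$-a.s.\ differentiable with derivatives $\partial_k X^{x,j}_s$ by the hypotheses made in Section \ref{sec:setup}, all terms are well defined and the chain rule applies pointwise $\prob$-a.s.

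For the first identity I would start from
\[
\frac{\partial}{\partial t}G(t+s,X^{c}_s)=\partial_t G(t+s,X^{c}_s)+\sum_{j=1}^{d}\partial_j G(t+s,X^{c}_s)\,\partial_t(X^{c,j}_s),
\]
observe by \eqref{DXb2} that $\partial_t(X^{c,j}_s)=0$ for $j\ge 2$, and substitute \eqref{DXb0} into the remaining $j=1$ summand. This yields the announced expression with the single term $\partial_1 G(t+s,X^{c}_s)\,\partial_1 X^{x,1}_s|_{x=c}\,\partial_t c$.

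For the second identity I would similarly write
\[
\frac{\partial}{\partial x_k}F(X^{c}_s)=\sum_{j=1}^{d}\partial_j F(X^{c}_s)\,\partial_k(X^{c,j}_s),
\]
and then split the $j=1$ term using \eqref{DXb1} as $\partial_1 F(X^{c}_s)\bigl[\partial_k c\,\partial_1 X^{x,1}_s|_{x=c}+\partial_k X^{x,1}_s|_{x=c}\bigr]$, while the terms $j\ge 2$ are $\partial_j F(X^{c}_s)\,\partial_k X^{x,j}_s|_{x=c}$ by \eqref{DXb2}. Regrouping the contributions $\partial_k X^{x,j}_s|_{x=c}$ for $j=1,\ldots,d$ into the inner product $\langle\nabla_x F(X^{c}_s),\partial_k X^{x}_s|_{x=c}\rangle$ and isolating the extra $\partial_1$-term gives exactly the stated formula.

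There is essentially no technical obstacle: once Lemma \ref{lem:DXb} is granted, the corollary reduces to bookkeeping with the chain rule. The only minor care needed is to verify that the assumed differentiability of $c$ at $(t,x_2,\ldots,x_d)$ together with the a.s.\ continuous differentiability of $x\mapsto X^x$ is enough to justify interchanging the differentiation in the outer parameters with the composition; this is immediate because $\nabla_x G$ and $\nabla_x F$ are continuous and $\partial_k X^{x}_s$ is $\prob$-a.s.\ continuous in $x$, so the chain rule applies $\prob$-a.s.\ for each fixed $s\in[0,T-t]$.
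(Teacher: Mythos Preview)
Your proposal is correct and matches the paper's intended approach: the paper presents this result as a ``straightforward corollary'' of Lemma~\ref{lem:DXb} without giving a proof, and your argument is precisely the chain-rule bookkeeping that the paper implicitly relies on. There is nothing to add.
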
 
The next lemma provides useful bounds which will be then employed to prove the main theorems below.
\begin{lemma}\label{lem:directional}
Let conditions \ref{A$_1$}, \ref{A$_2$}, \ref{B} and \ref{E} hold. Assume that $c$ is differentiable at $(t,x_2,\ldots x_d)$ and denote
\begin{align*}
&c:=(c(t,x_2,\ldots x_d),x_2,\ldots x_d),\\ 
&c^+_{t,\eps}:=(c(t+\eps,x_2,\ldots x_d),x_2,\ldots x_d),\\
&c^-_{t,\eps}:=(c(t-\eps,x_2,\ldots x_d),x_2,\ldots x_d),\\
&c^+_{k,\eps}:=(c(t,x_2,\ldots x_k+\eps,\ldots x_d),x_2,\ldots x_k+\eps,\ldots x_d),\\
&c^-_{k,\eps}:=(c(t,x_2,\ldots x_k-\eps,\ldots x_d),x_2,\ldots x_k-\eps,\ldots x_d),
\end{align*}
for $k=2,\ldots d$. Then for any $k\ge 2$ we have
\begin{align}\label{Dx-wb}
\limsup_{\eps\to 0}&\frac{w(t,c)-w(t,c^-_{k,\eps})}{\eps}\\
\le& w^\circ_1(t,c) \partial_k c(t,x_2,\ldots x_d)+w^\circ_k(t,c)\le \liminf_{\eps\to 0}\frac{w(t,c^+_{k,\eps})-w(t,c)}{\eps}.\nonumber
\end{align}
Moreover we also have
\begin{align}
\label{Dt-wb1}\limsup_{\eps\to 0}\frac{w(t,c)-w(t-\eps,c^-_{t,\eps})}{\eps}\le& w^\circ_1(t,c) \partial_t c(t,x_2,\ldots x_d)+\overline w(t,c),\\[+4pt]
\label{Dt-wb2}\liminf_{\eps\to 0}\frac{w(t+\eps,c^+_{t,\eps})-w(t,c)}{\eps}\ge& w^\circ_1(t,c) \partial_t c(t,x_2,\ldots x_d)+\underline w(t,c).
\end{align}
\end{lemma}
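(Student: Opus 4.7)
My plan is to adapt the two-step strategy of Theorem \ref{thm:lip} from a fixed base point to the curve parametrised by $c$. The starting point is the alternative representation (obtained by Dynkin's formula applied to $f$)
\[
w(t,x)=\sup_{0\le\tau\le T-t}\media\Big[\int_0^\tau(h+m)(t+s,X^x_s)\,ds+\mathds{1}_{\{\tau=T-t\}}G(X^x_{T-t})\Big],
\]
with $G:=g-f(T,\cdot)\ge 0$ by condition \ref{A}. Throughout I fix $\tau_*=\tau_*(t,c)$ and use it (or a simple modification of it) as a sub-optimal stopping time for the perturbed problem appearing in each of the three inequalities. The limits are then extracted by combining Assumption \ref{ass:fgh}, the bound \eqref{grad-L2}, dominated convergence, the mean value theorem, and Corollary \ref{cor:DXb}, which converts a pathwise $\eps$-perturbation into a first-coordinate contribution weighted by $\partial_t c$ or $\partial_k c$ plus the honest $t$- or $x_k$-derivative.

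For \eqref{Dx-wb}, the right-hand inequality follows by plugging $\tau_*$ sub-optimally into $w(t,c^+_{k,\eps})$, yielding
\begin{align*}
w(t,c^+_{k,\eps})-w(t,c)\ge{}&\media\Big[\int_0^{\tau_*}\bigl((h+m)(t+s,X^{c^+_{k,\eps}}_s)-(h+m)(t+s,X^c_s)\bigr)ds\\
&\quad+\mathds{1}_{\{\tau_*=T-t\}}\bigl(G(X^{c^+_{k,\eps}}_{T-t})-G(X^c_{T-t})\bigr)\Big].
\end{align*}
After dividing by $\eps$ and passing to the liminf, Corollary \ref{cor:DXb} identifies the limit with $w^\circ_1(t,c)\partial_k c+w^\circ_k(t,c)$, since $c^+_{k,\eps}$ perturbs both the first and the $k$-th initial coordinate. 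The matching upper bound on the limsup of $[w(t,c)-w(t,c^-_{k,\eps})]/\eps$ is obtained symmetrically by plugging $\tau_*$ into $w(t,c^-_{k,\eps})$.

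The main obstacle is \eqref{Dt-wb1}: a direct use of $\tau_*$ for $w(t-\eps,c^-_{t,\eps})$ would leave the uncontrolled term $\eps^{-1}\mathds{1}_{\{\tau_*=T-t\}}G(X^c_{T-t})$. To bypass this I follow the refinement introduced in Step 2 of Theorem \ref{thm:lip}: I pass through $v$, use $\tau_*$ as an admissible (since $\tau_*\le T-t<T-(t-\eps)$) but sub-optimal stopping time for $v(t-\eps,c^-_{t,\eps})$, and on the event $\{\tau_*=T-t\}$ apply the strong Markov property to continue past $\tau_*$, bounding $v(T-\eps,y)$ from below by the sub-optimal reward $g(y)+\media[\int_0^\eps(h+n)(T-\eps+s,X^y_s)ds]$ exactly as in the proof of Theorem \ref{thm:lip}. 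After subtracting $f(t-\eps,c^-_{t,\eps})$, the singular $\eps^{-1}G$ contribution is replaced by the finite limit $-\mathds{1}_{\{\tau_*=T-t\}}(h+n)(T,X^c_{T-t})$ appearing in $\overline w$. An application of Dynkin's formula to $\partial_t f$ and to the product $\partial_1 f(t+s,X^c_s)\partial_1 X^{x,1}_s|_{x=c}$ (whose drift equals $\partial_1 X^{x,1}_s\partial_1 m$, as already used in the proof of Proposition \ref{b:existence}) converts the remaining $f$- and $g$-expectations into the $(h+m),G$-form defining $\overline w$, while Corollary \ref{cor:DXb} produces the additional $w^\circ_1(t,c)\partial_t c$ contribution.

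The lower bound \eqref{Dt-wb2} is the symmetric counterpart, constructed as in the paragraph containing \eqref{vt0}: the stopping time $\sigma_\eps:=\tau_*\wedge(T-t-\eps)$ is admissible for $v(t+\eps,c^+_{t,\eps})$, the terms carrying the indicator $\{\tau_*>T-t-\eps\}$ are collected via the martingale property \eqref{mg} of $Y$ evaluated along the reference path $X^c$, and the upper bound on $v(T-\eps,\cdot)$ derived at \eqref{eq:gab0} closes the argument. Everything else runs as for \eqref{Dt-wb1}, and the common hard point is again the treatment of $\{\tau_*=T-t\}$: only the reconstruction through $v$ combined with the bounds on $v(T-\eps,\cdot)$ from the proof of Theorem \ref{thm:lip} eliminates the $\eps^{-1}G$ singularity and leaves the finite $\overline w$/$\underline w$ terms.
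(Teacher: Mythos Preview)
Your proposal is correct and follows essentially the same route as the paper: use $\tau_*=\tau_*(t,c)$ sub-optimally in the perturbed $w$ (or $v$) problem, handle the event $\{\tau_*=T-t\}$ for the time increments exactly as in Step~2 of Theorem~\ref{thm:lip} via the lower/upper bounds on $v(T-\eps,\cdot)$, and then pass to the limit with Corollary~\ref{cor:DXb}. The only superfluous step is the separate Dynkin formula for the product $\partial_1 f\cdot\partial_1 X^{x,1}$: once you subtract $f(t,c)-f(t-\eps,c^-_{t,\eps})$ using Dynkin's formula for $f$ itself, the bound is already in $(h+m),\,\hat g$ form and Corollary~\ref{cor:DXb} directly isolates the $w^\circ_1(t,c)\partial_t c$ contribution.
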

\begin{proof}
The proof relies on Lemma \ref{lem:DXb} and on arguments similar to those used to prove Theorem \ref{thm:lip}.
Denote $\tau=\tau_*(t,c)$ and let us consider the first inequality in \eqref{Dx-wb}. Notice that $\tau$ is optimal for $w(t,c)$ and sub-optimal for $w(t,c^-_{k,\eps})$ for $\eps>0$. Therefore we may estimate
\begin{align*}
w(t,c)-w(t,c^-_{k,\eps})\le& \media\left[\int_0^\tau\left((h+m)(t+s,X^{c}_s)-(h+m)(t+s,X^{c^-_{k,\eps}}_s)\right)ds\right]\\
&+\media\left[\mathds{1}_{\{\tau=T-t\}}\left(g(X^c_{T-t})-g(X^{c^-_{k,\eps}}_{T-t})+f(T,X^{c^-_{k,\eps}}_{T-t})-f(T,X^c_{T-t})\right)\right].
\end{align*}
Dividing by $\eps$, taking limits as $\eps\to0$ and using Corollary \ref{cor:DXb} and \eqref{grad-L2} we obtain the first inequality in \eqref{Dx-wb} upon recalling the definitions of $w^\circ_1$ and $w^\circ_k$.

A symmetric argument may be applied to obtain the second inequality in \eqref{Dx-wb}. This time we notice that $\tau$ is sub-optimal for $w(t,c^+_{k,\eps})$ for $\eps>0$ so that 
\begin{align*}
w(t,c^+_{k,\eps})-w(t,c)\ge& \media\left[\int_0^\tau\left((h+m)(t+s,X^{c^+_{k,\eps}}_s)-(h+m)(t+s,X^{c}_s)\right)ds \right]\\
&+\media\left[\mathds{1}_{\{\tau=T-t\}}\left(g(X^{c^+_{k,\eps}}_{T-t})-g(X^{c}_{T-t})+f(T,X^{c}_{T-t})-f(T,X^{c^+_{k,\eps}}_{T-t})\right)\right]
\end{align*}
holds. Dividing by $\eps$ and passing to the limit the claim follows thanks to Corollary \ref{cor:DXb}.

For \eqref{Dt-wb1} we repeat arguments similar to those that led to \eqref{eq:new1} in step 2 of the proof of Theorem \ref{thm:lip}. These give
\begin{align*}
w(t,c)&-w(t-\eps,c^-_{t,\eps})\\
\le&\media\left[\int_0^\tau\left((h+m)(t+s,X^c_s)-(h+m)(t-\eps+s,X^{c^{-}_{t,\eps}}_s)\right)ds\right]
\end{align*}
\begin{align*}
&+\media\left[\mathds{1}_{\{\tau=T-t\}}\left(\hat g(X^{c}_{T-t})-\hat g(X^{c^{-}_{t,\eps}}_{T-t})\right)\right]\\[+4pt]
&-\media_{c^{-}_{t,\eps}}\left[\mathds{1}_{\{\tau=T-t\}}\media_{X_{T-t}}\left(\int_0^\eps [(h+\partial_t f)(T-\eps+s,X_s)+n(X_s)]ds\right)\right],
\end{align*}
where we have set $\hat{g}(x):=g(x)-f(T,x)$ to simplify the notation. 
Now dividing both sides of the above expression by $\eps$, letting $\eps\to0$ and using Corollary \ref{cor:DXb} we get \eqref{Dt-wb1}.

Similar arguments hold for \eqref{Dt-wb2} and following step 2 in the proof of Theorem \ref{thm:lip}, using that $\hat g\ge0$, we have
\begin{align*}
w(t+\eps,c^+_{t,\eps})&-w(t,c)\\[+4pt]
\ge &\media\left[\int_0^{\tau\wedge(T-t-\eps)}\left((h+m)(t+\eps+s,X^{c^+_{t,\eps}}_s)-(h+m)(t+s,X^{c}_s)\right)ds\right]\\[+4pt]
&+\media\left[\mathds{1}_{\{\tau>T-t-\eps\}}\left(\hat g(X^{c^{+}_{t,\eps}}_{T-t-\eps})-\hat g(X^{c}_{T-t-\eps})\right)\right]\\[+4pt]
&-\media_c\left[\mathds{1}_{\{\tau>T-t-\eps\}}\media_{X_{T-t-\eps}}\left(\int_0^\eps|(h+\partial_tf)(T-\eps+s,X_s)+n(X_s)|ds\right)\right].
\end{align*}
Finally, dividing by $\eps$ and passing to the limit we get \eqref{Dt-wb2} thanks to Corollary \ref{cor:DXb}.
\end{proof}

Under the assumptions of Proposition \ref{b:existence} we have $\partial_1w\ge 0$ almost everywhere and therefore, for each $\delta>0$, and for fixed $( t, x_2,\ldots  x_d)$, the equation $w( t, \cdot,x_2,\ldots x_d)=\delta$ has at most a unique solution which we denote by $b_\delta(t,x_2,\ldots x_d)$. 

The next proposition states that $b_\delta$ is Lipschitz whenever finite and provides an important representation of its gradient at the points of differentiability. Below we use $\overline U$ for the closure of a set $U$. 
\begin{proposition}\label{prop:bdelta}
Assume conditions \ref{A$_1$}, \ref{A$_2$}, \ref{B}, \ref{C} and \ref{E}. Fix $(\hat t,\hat x_2,\ldots \hat x_d)\in(0,T)\times\reali^{d-1}$ and assume that there exists an open bounded neighbourhood $U$ of $(\hat t,\hat x_2,\ldots \hat x_d)$ and numbers $-\infty<\underline b< \overline b<+\infty$ such that 
\begin{align}
\label{bdfinite1}&\underline b< b(t,x_2,\ldots x_d)< \overline b\quad\text{for $(t,x_2,\ldots x_d)\in \overline U$,}\\[+3pt]
\label{bdfinite2}&\partial_1(h+m)( t,x_1, x_2,\ldots  x_d)>0\quad\text{for $(t,x_2,\ldots x_d)\in \overline U$ and $x_1\in[\underline b,\overline b]$.}
\end{align} 
Then there exists $\delta_U>0$ such that $b_\delta$ is Lipschitz in $U$ for all $\delta\in(0, \delta_U]$.
Moreover, for all $k\ge 2$, and for a.e.~$(t,y)\in U$ we have
\begin{align}
\label{lip00}&\partial_k b_\delta(t,y)=-\frac{w^\circ_k(t,b_\delta(t,y),y)}{w^\circ_1(t,b_\delta(t,y),y)},\\[+6pt]
\label{lip01}&-\frac{\overline w(t,b_\delta(t,y),y)}{w^\circ_1(t,b_\delta(t,y),y)}\le \partial_t b_\delta(t,y)\le-\frac{\underline w(t,b_\delta(t,y),y)}{w^\circ_1(t,b_\delta(t,y),y)}.
\end{align}
\end{proposition}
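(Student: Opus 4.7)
The plan is to treat $b_\delta$ as the level set $\{w = \delta\}$ of the locally Lipschitz function $w := v - f$ (Lipschitz continuity of $w$ coming from Theorem \ref{thm:lip}) and to run an implicit-function-theorem type argument. The three analytic ingredients are: Lipschitz continuity of $w$, a uniform strictly positive lower bound on $\partial_1 w$ along the level set, and Lemma \ref{lem:directional} for the gradient formulas. For well-definedness, note that for $(t,y) \in U$ and $x_1 \in (b(t,y), \overline b)$ the representation \eqref{eq:wcirc-1}, combined with $\tau_* > 0$ $\prob$-a.s., $\partial_1 X^{x,1} > 0$, condition \ref{A} and \eqref{bdfinite2}, gives $w^\circ_1 > 0$, so by Theorem \ref{thm:lip} the map $x_1 \mapsto w(t, x_1, y)$ is strictly increasing on $(b(t,y), \overline b)$. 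Choosing $\delta_U > 0$ small enough and possibly shrinking $U$, the equation $w(t, \cdot, y) = \delta$ admits a unique root $b_\delta(t, y) \in [\underline b', \overline b'] \Subset (\underline b, \overline b)$ for every $(t,y) \in U$ and $\delta \in (0, \delta_U]$, and continuity of $b_\delta$ follows from continuity of $w$ and strict monotonicity in $x_1$.

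The key step is to secure a uniform lower bound $w^\circ_1(t, b_\delta(t,y), y) \ge c_0 > 0$ for $(t,y) \in U$ and $\delta \in (0, \delta_U]$. From \eqref{bdfinite2} together with the regularity assumed on $h$ and $m$, one may arrange (possibly after shrinking $U$) that $\partial_1(h+m) \ge \alpha_0 > 0$ on a compact set of the form $\overline U \times [\underline b', \overline b']$. Since $(t, b_\delta(t,y), y) \in \cC$ and the process has continuous paths, there exist $\eta > 0$ and $p > 0$, independent of $(t,y) \in U$ and of $\delta \in (0, \delta_U]$, such that the event $\{X^x_s \in [\underline b', \overline b'] \times K \text{ for all } s \in [0, \eta]\}$ (with $x = (b_\delta(t,y), y)$ and $K$ a compact neighbourhood of the $y$-coordinates) has probability at least $p$, and on this event $\tau_* \ge \eta$. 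A standard Gr\"onwall estimate applied to \eqref{dX1} produces $\partial_1 X^{x,1}_s \ge e^{-\kappa \eta}$ for $s \in [0, \eta]$ uniformly on compacts, and inserting these bounds into \eqref{eq:wcirc-1} yields the desired $c_0 > 0$.

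With the lower bound in hand, Lipschitz continuity of $b_\delta$ on $U$ follows from a routine implicit-function argument. For $(t, y), (t, y + \eps e_k) \in U$, adding and subtracting $w(t, b_\delta(t, y), y + \eps e_k)$ in the identity $\delta = w(t, b_\delta(t, y + \eps e_k), y + \eps e_k) = w(t, b_\delta(t, y), y)$ and invoking absolute continuity of $w(t, \cdot, y + \eps e_k)$ in $x_1$ (from its Lipschitz continuity in $x_1$) yields
\begin{equation*}
\int_{b_\delta(t, y)}^{b_\delta(t, y + \eps e_k)} \partial_1 w(t, \xi, y + \eps e_k)\, d\xi = w(t, b_\delta(t, y), y) - w(t, b_\delta(t, y), y + \eps e_k).
\end{equation*}
The right-hand side is $O(\eps)$ by local Lipschitz continuity of $w$ in $y$, while the integral on the left is bounded below in absolute value by $c_0 |b_\delta(t, y + \eps e_k) - b_\delta(t, y)|$ since $\partial_1 w \ge w^\circ_1 \ge c_0$ a.e.\ on the integration segment (which remains in $[\underline b', \overline b']$). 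The analogous argument in the $t$-direction, using Lipschitz continuity of $w$ in $t$ from Theorem \ref{thm:lip}, concludes that $b_\delta$ is Lipschitz on $U$.

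Finally, at any point $(t, y) \in U$ of differentiability of $b_\delta$ (a.e.\ by Rademacher), set $c := b_\delta(t, y)$ and observe that each of the four difference quotients appearing in Lemma \ref{lem:directional} evaluates the constant function $(t', y') \mapsto w(t', b_\delta(t', y'), y') \equiv \delta$ and therefore equals $0$. Inequality \eqref{Dx-wb} then forces $0 = w^\circ_1(t, c) \partial_k b_\delta(t, y) + w^\circ_k(t, c)$, which yields \eqref{lip00} upon dividing by $w^\circ_1 \ge c_0 > 0$; \eqref{Dt-wb1} and \eqref{Dt-wb2} likewise produce the sandwich \eqref{lip01}. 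The main obstacle I anticipate is Step 2: extracting a quantitative, uniform-in-$\delta$ positive lower bound on $w^\circ_1$ along the level set, robust as $\delta \downarrow 0$ and $b_\delta \downarrow b$.
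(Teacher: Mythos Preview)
Your overall architecture is the same as the paper's: show $b_\delta$ is well-defined and continuous via monotonicity of $x_1\mapsto w(t,x_1,y)$, then run an implicit-function increment argument using the Lipschitz property of $w$ from Theorem~\ref{thm:lip}, and finally read off \eqref{lip00}--\eqref{lip01} by feeding $c=b_\delta$ into Lemma~\ref{lem:directional} and using that all four difference quotients vanish. Steps~1, 3 and 4 are essentially the paper's proof.

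The issue is Step~2. You aim for a lower bound $w^\circ_1(t,b_\delta(t,y),y)\ge c_0>0$ that is \emph{uniform in $\delta\in(0,\delta_U]$}, and your argument for it has a gap: the event ``$X^x_s$ stays in $[\underline b',\overline b']\times K$ for $s\in[0,\eta]$'' does \emph{not} force $\tau_*\ge\eta$, because the stopping set $\{x_1\le b(t+s,\cdot)\}$ can (and for small $\delta$ will) intersect that box. As $\delta\downarrow 0$ the starting point $b_\delta(t,y)\downarrow b(t,y)$ approaches $\partial\cC$, so $\tau_*$ need not stay bounded away from $0$ in the way you use it. The uniform-in-$\delta$ bound is in fact what the later Theorems~\ref{thm:F} and~\ref{thm:G} extract, but only under the additional structural hypotheses \ref{F} or \ref{G}; it is not available at the level of Proposition~\ref{prop:bdelta}.

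The good news is that you do not need it. The proposition only asserts that \emph{each} $b_\delta$ is Lipschitz on $U$; the Lipschitz constant is allowed to depend on $\delta$. The paper accordingly works with a constant $c_{U,\delta}>0$: for fixed $\delta$ the integration segment in your Step~3 lies in $\cC\cap U_b$, where \eqref{monot2b} and \eqref{bdfinite2} give $\partial_1 w>0$, and one takes $c_{U,\delta}$ to be an infimum over this (for fixed $\delta$) compactly contained region. Replace your Step~2 by this $\delta$-dependent bound and the rest of your argument goes through unchanged.
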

\begin{proof}
From now on we denote 
\begin{align*}
U_b:=\{(t,x)\,:\,x_1\in(\underline b,\overline b),\,(t,x_2,\ldots x_d)\in U\}.
\end{align*}
Since $\overline U_b$ is compact and $w$ is continuous, then there exists $\delta_U>0$ sufficiently small and such that $b_\delta$ is bounded on $U$ for all $\delta\le \delta_U$, due to \eqref{bdfinite1}. With no loss of generality we then assume that $b_\delta(t,y)\in(\underline b,\overline b)$ for $(t,y)\in U$ and all $\delta\le\delta_U$.
Next we show Lipschitz regularity of $b_\delta$. 
  
For all $(t,y)\in U$ the map $x_1\mapsto w(t,x_1,y)$ is Lipschitz (Theorem \ref{thm:lip}). Then it is differentiable with $\partial_1 w(t,x_1,y)>0$, for a.e.~$x_1$ such that $(t,x_1,y)\in\cC\cap U_b$, by \eqref{monot2b} and \eqref{bdfinite2}. It follows that for all $(t,y)\in U$ the mapping $x_1\mapsto w(t,x_1,y)$ is strictly increasing on $(b(t,y), \overline b)$ and therefore a version of the implicit function theorem (see \cite{Kum80}) implies that $b_\delta$ is continuous in $U$. 

For $\eps\in\reali$ we denote $b^\eps_\delta:=b_\delta(\hat t, \hat x_2+\eps,\hat x_3,\ldots \hat x_d)$ and $b^0_\delta=b_\delta$. With no loss of generality we assume that $(\hat t,b^\eps_\delta,\hat x_2+\eps,\hat x_3 \ldots \hat x_d)$ and $(\hat t,b^0_\delta,\hat x_2,\hat x_3 \ldots \hat x_d)$ lie in $U_b$. Since $b_\delta\in C(U)$ and we are interested in small $\eps$, there is again no loss of generality in assuming that 
\[
\zeta\mapsto (\hat t, b^\eps_\delta,\hat x_2+\zeta,\hat x_3,\ldots \hat x_d),\,\,\eta\mapsto (\hat t,\eta,\hat x_2,\ldots \hat x_d)
\]
lie in a compact $K\subset U_b\cap\cC$, for $\zeta\in (0,\eps)$ and $\eta\in(b_\delta\wedge b^\eps_\delta,b_\delta\vee b^\eps_\delta)$ .

Using that $w$ is Lipschitz in $U_b$ (see Theorem \ref{thm:lip}), for $\eps\in\reali$ we have
\begin{align}\label{e1}
0=&w(\hat t, b^\eps_\delta,\hat x_2+\eps,\hat x_3,\ldots \hat x_d)-w(\hat t, b_\delta,\hat x_2,\ldots \hat x_d)\nonumber\\
=&w(\hat t, b^\eps_\delta,\hat x_2+\eps,\ldots \hat x_d)-w(\hat t, b^\eps_\delta,\hat x_2,\ldots \hat x_d)+w(\hat t, b^\eps_\delta,\hat x_2,\ldots \hat x_d)-w(\hat t, b_\delta,\hat x_2,\ldots \hat x_d)\nonumber\\
=&\int_0^\eps\partial_2 w(\hat t, b^\eps_\delta,\hat x_2+\zeta,\hat x_3,\ldots \hat x_d)d\zeta+\int^{b^\eps_\delta}_{b_\delta}\partial_1 w(\hat t,\zeta,\hat x_2,\ldots \hat x_d)d\zeta.
\end{align}
Then, using Lemma \ref{lem:wxLB} with $\cO=U_b$, we have $\partial_1w\ge c_{K,\delta}>0$ on $K$ for a suitable constant $c_{K,\delta}$ depending on $K$ and $\delta$. From the last expression in \eqref{e1} we get 
\begin{align*}
\left|\int_0^\eps\partial_2 w(\hat t, b^\eps_\delta,\hat x_2+\zeta,\hat x_3,\ldots \hat x_d)d\zeta\right|=\left|\int^{b^\eps_\delta}_{b_\delta}\partial_1 w(\hat t,\zeta,\hat x_2,\ldots \hat x_d)d\zeta\right|\ge c_{K,\delta}|b^\eps_\delta-b_\delta|
\end{align*}
and using that $|\partial_2 w|\le c'_U$ a.e.~on $U_b$ for a suitable constant $c'_U$ (see Theorem \ref{thm:lip}), we conclude 
\begin{align}\label{bdlip1}
|b^\eps_\delta-b_\delta|\le c'_U/c_{K,\delta}\,\cdot| \eps|.
\end{align}

The same argument may be repeated for the remaining variables $x_k$, $k\ge 3$, and for $t$. Then, for any $(t',x'):=(t',x'_2,\ldots x'_d)$ and $(t,x):=(t,x_2,\ldots x_d)$ belonging to a small ball in $U$, we have
\begin{align}
|b_\delta(t',x')-b_\delta(t,x)|\le c''_{\delta}(|t-t'|+\|x-x'\|_{d-1})
\end{align}
for a suitable constant $c''_{\delta}$ which depends on the small ball as well. This proves that $b_\delta$ is locally Lipschitz in $U$, hence it is differentiable almost everywhere therein. 

Next we obtain probabilistic bounds for the gradient of $b_\delta$. For $\eps>0$ we adopt the notation of Lemma \ref{lem:directional}. To simplify the exposition we set $c:=(b_\delta(t,x_2,\ldots x_d),x_2,\ldots x_d)$, so that $c^{\pm}_{k,\eps}$ and $c^{\pm}_{t,\eps}$ have the same meaning as in Lemma \ref{lem:directional} but with $b_\delta(\cdot)$ instead of the function $c(\cdot)$. Recall that $b_\delta\in (\underline b,\overline b)$ on $U$. Then, for all $k\ge 2$
\begin{align}
\delta=w(t,c)=w(t,c^-_{k,\eps})=w(t,c^+_{k,\eps})=w(t-\eps,c^-_{t,\eps})=w(t+\eps,c^+_{t,\eps}).
\end{align}
Hence by \eqref{Dx-wb}--\eqref{Dt-wb2} we obtain that if $b_\delta$ is differentiable at $(t,x_2,\ldots x_d)$, then 
\begin{align}
&\partial_k b_\delta(t,x_2,\ldots x_d)=-\frac{w^\circ_k(t,b_\delta(t,x_2,\ldots),x_2,\ldots x_d)}{w^\circ_1(t,b_\delta(t,x_2,\ldots),x_2,\ldots x_d)}\\
&-\frac{\overline w(t,b_\delta(t,x_2,\ldots),x_2,\ldots x_d)}{w^\circ_1(t,b_\delta(t,x_2,\ldots),x_2,\ldots x_d)}\le \partial_t b_\delta(t,x_2,\ldots x_d)\le -\frac{\underline w(t,b_\delta(t,x_2,\ldots),x_2,\ldots x_d)}{w^\circ_1(t,b_\delta(t,x_2,\ldots),x_2,\ldots x_d)}.
\end{align}
Since $b_\delta$ is differentiable almost everywhere in $U$ then \eqref{lip00} and \eqref{lip01} follow.  
\end{proof}

Using the bounds obtained for $\partial_t b_\delta$ and $\nabla_x b_\delta$ we can now prove the main theorems of this section. In what follows $T$ may be infinite unless stated otherwise. The first theorem uses the next condition.
\begin{description}
\item[(F) Bounds II. \namedlabel{F}{(F)}] For $(t,x)\in[0,T]\times\reali^d$ there exists $c>0$ such that 
\begin{align}
&\sum^d_{j=1}|\partial_j(h+m)(t,x)|+|\partial_t(h+m)(t,x)|\le c\,\partial_1(h+m)(t,x),
\end{align}
and if $T<+\infty$ then also 
\begin{align}
\label{eq:T00}&|h(T,x)+n(x)|+2|\partial_t f(T,x)|+\sum^d_{j=1}|\partial_j(g(x)-f(T,x))|\le c\,\partial_1(g(x)-f(T,x)).
\end{align}
\end{description}
 
Now we can state the theorem and give its proof.
\begin{theorem}[\textbf{Statement under \ref{F}}]\label{thm:F}
Assume that $d\ge2$ and conditions \ref{A$_1$}, \ref{A$_2$}, \ref{B}, \ref{C}, \ref{E} and \ref{F} hold. Assume also that the bound $\sum^d_{j=1}\|\nabla_x \mu_j(x)\|_d\le c$ holds for all $x\in\reali^d$ 
and for some $c>0$. 

If there exists $(\hat{t},\hat{x}_2,\ldots \hat{x}_d)$ and an open bounded neighbourhood $U$ of the point such that \eqref{bdfinite1} and \eqref{bdfinite2} hold then $b$ is Lipschitz on $U$. 
\end{theorem}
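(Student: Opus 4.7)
The plan is to apply Proposition \ref{prop:bdelta} in order to reduce the problem to producing uniform (in $\delta$) Lipschitz estimates for the level sets $b_\delta$, and then to pass to the limit $\delta\to 0$ via Ascoli--Arzel\`a. Indeed, conditions \ref{A}, \ref{B}, \ref{C}, \ref{E} together with \eqref{bdfinite1}--\eqref{bdfinite2} put us exactly in the setting of Proposition \ref{prop:bdelta}, so that $b_\delta$ is Lipschitz on $U$ for all $\delta\in(0,\delta_U]$ and its (almost everywhere defined) partial derivatives are given by \eqref{lip00}--\eqref{lip01}. My remaining task is therefore to bound, uniformly in $\delta$ and $(t,y)\in U$, the ratios
\begin{align*}
\frac{|w^\circ_k(t,b_\delta(t,y),y)|}{w^\circ_1(t,b_\delta(t,y),y)},\qquad \frac{|\overline w(t,b_\delta(t,y),y)|}{w^\circ_1(t,b_\delta(t,y),y)},\qquad \frac{|\underline w(t,b_\delta(t,y),y)|}{w^\circ_1(t,b_\delta(t,y),y)},
\end{align*}
by a constant depending only on $U$ (not on $\delta$).

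The key idea is that condition \ref{F} is designed to make this a pathwise estimate. Using $|\langle\nabla_x F,\partial_kX^x_s\rangle|\le(\sum_j|\partial_jF|)\|\partial_kX^x_s\|_d$ together with \ref{F}, I would majorise the integrand of $w^\circ_k$ by $c\,\partial_1(h+m)(t+s,X^x_s)\cdot\|\partial_kX^x_s\|_d$ and, on $\{\tau_*=T-t\}$, the terminal contribution by $c\,\partial_1(g-f(T,\cdot))(X^x_{T-t})\cdot\|\partial_kX^x_{T-t}\|_d$; similarly, in the definitions \eqref{eq:wtup}--\eqref{eq:wtdwn} of $\overline w$ and $\underline w$, condition \ref{F} directly controls $|\partial_t(h+m)|$ and $|(h+\partial_tf)(T,\cdot)+n|$ by the same majorants. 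On the other hand, the representation \eqref{eq:wcirc-1} together with $\partial_1(h+m)\ge 0$ (from \ref{C}) and $\partial_1(g-f(T,\cdot))\ge 0$ (from \ref{A}) gives
\begin{align*}
w^\circ_1(t,x)\ge\Big(\inf_{s\le T}\partial_1X^{x,1}_s\Big)\,\media\Big[\int_0^{\tau_*}\partial_1(h+m)(t+s,X^x_s)\,ds+\mathds{1}_{\{\tau_*=T-t\}}\partial_1(g-f(T,\cdot))(X^x_{T-t})\Big].
\end{align*}
The uniform bound $\sum_j\|\nabla_x\mu_j\|_d\le c$ together with Lemma \ref{lem:gradX} gives a deterministic constant $M>0$ such that $\sup_{x,s,\omega}\|\partial_kX^x_s\|_d\le M$ $\prob$-a.s., and, from \eqref{dX1} with $|\partial_1\mu_1|\le c$, one has $\partial_1X^{x,1}_s\ge e^{-cT}$. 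Dividing the two estimates, the same expectation cancels and each of the three ratios above is bounded by a constant of the form $cMe^{cT}$, uniformly in $\delta$ and $(t,y)\in U$.

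With the uniform Lipschitz constant $K_U$ in hand for the family $\{b_\delta\}_{\delta\in(0,\delta_U]}$, I would conclude by letting $\delta\to 0$. Continuity of $w$ (Theorem \ref{thm:lip}) together with strict monotonicity $\partial_1w>0$ in $\cC\cap U_b$ (a consequence of \eqref{monot2b} and \eqref{bdfinite2}) implies $b_\delta\searrow b$ pointwise on $U$; the uniform Lipschitz bound then upgrades this to uniform convergence on compact subsets of $U$ by Ascoli--Arzel\`a, and the limit $b$ inherits Lipschitz continuity with constant at most $K_U$.

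The step I expect to require the most care is the pathwise reduction in the second paragraph: one must ensure that numerator and denominator of each ratio are written as expectations with respect to the \emph{same} optimal stopping time $\tau_*=\tau_*(t,b_\delta(t,y),y)$ and the \emph{same} path $X^x$, so that \ref{F} may be applied inside the expectation and only deterministic constants (from the bound on $\partial X$) leave the integral. This is exactly why \ref{F} is formulated so that the ``bad'' quantities $\partial_t(h+m)$, $\sum_j|\partial_j(h+m)|$ and the terminal counterparts are all dominated by the \emph{same} nonnegative quantity $\partial_1(h+m)$ that drives the lower bound for $w^\circ_1$; this is what allows us to avoid assuming a positive pointwise lower bound on $\partial_1(h+m)$ (contrary to Theorem \ref{thm:lip-d1}) at the cost of the stronger ratio hypothesis \ref{F}.
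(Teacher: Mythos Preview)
Your proposal is correct and follows essentially the same approach as the paper's proof: both apply Proposition \ref{prop:bdelta}, then use condition \ref{F} together with the deterministic bounds on $\partial_k X^x$ and $\partial_1 X^{x,1}$ (from Lemma \ref{lem:gradX} and \eqref{dX1}) to obtain the pointwise inequalities $|w^\circ_k|\le \beta_0\, w^\circ_1$ and $\max\{|\underline w|,|\overline w|\}\le \beta_1\, w^\circ_1$, and finally pass to the limit $\delta\downarrow 0$ via Ascoli--Arzel\`a. Your write-up is in fact more explicit than the paper's about why the same expectation appears in numerator and denominator and hence cancels; note only that your bound $\partial_1 X^{x,1}_s\ge e^{-cT}$ tacitly assumes $T<+\infty$, which the paper handles by remarking that the $T=+\infty$ case follows with minor changes.
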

\begin{proof}
We provide a full proof only for $T<+\infty$ but the same arguments hold for $T=+\infty$ up to simple minor changes.

The main idea of this proof is to show that 
\begin{align}\label{eq:c0}
\|\partial_t b_\delta\|_{U,\infty}+\sum^d_{k=2}\|\partial_k b_\delta\|_{U,\infty}\le c
\end{align}
for a uniform $c>0$. Here $\|\cdot\|_{U,\infty}$ is the usual $L^\infty(U)$ norm and we are going to use the expressions for $w^\circ_k$, $\overline w$ and $\underline w$ (see \eqref{eq:wcirc-k}--\eqref{eq:wtdwn}) to find bounds in \eqref{lip00} and \eqref{lip01} (notice that the latter hold in the a.e.~sense).

Recalling \eqref{gradX}, the fact that $\partial_1\mu_j(x)=0$ for $j>1$ due to \ref{B}, and the bounds in \ref{F} it is not hard to verify that \eqref{eq:wcirc-k} gives
\begin{align}\label{gradw}
|w^\circ_k(t,x)|\le \beta_0 w^\circ_1(t,x),\qquad (t,x)\in[0,T]\times\reali^d
\end{align}
for all $k=2,\ldots d$, and a suitable $\beta_0>0$ which is independent of $t,x$ and $k$. 
Similarly, using the bounds \ref{F} in \eqref{eq:wtup}--\eqref{eq:wtdwn} we can find $\beta_1>0$ such that  
\begin{align}\label{wt}
\max\{|\underline w|,|\overline w|\}(t,x)\le \beta_1 w^\circ_1(t,x),\qquad (t,x)\in[0,T]\times\reali^d.
\end{align}

Now we argue as in the proof of Theorem \ref{thm:lip-d1} and since $|b|<+\infty$ on $U$ we can find $\delta_U>0$ sufficiently small and such that $w(t,\,\cdot\,,x_2,\ldots x_d)=\delta$ has a solution $x_1=b_\delta(t,x_2,\ldots x_d)$, which is finite in $U$ for all $\delta\le \delta_U$. Then $b_\delta$ is Lipschitz in $U$ by Proposition \ref{prop:bdelta}. Moreover from \eqref{lip00}, \eqref{lip01} and \eqref{wt} we obtain, for a.e.~$(t,y)\in U$
\begin{align}\label{lip02}
|\partial_t b_\delta(t,y)|\le \frac{\max\{|\underline w|,|\overline w|\}(t,b_\delta(t,y),y)}{ w^\circ_1(t,b_\delta(t,y),y)} \le \beta_1
\end{align}
and from \eqref{gradw}
\begin{align}\label{lip03}
|\partial_k b_\delta(t,y)|=\frac{|w^\circ_k(t,b_\delta(t,y),y)|}{w^\circ_1(t,b_\delta(t,y),y)} \le \beta_0,\quad k=2,\ldots d.
\end{align}
As in the proof of Theorem \ref{thm:lip-d1} (see \eqref{lim-p1}) we have pointwise convergence $b_\delta\downarrow b$ as $\delta\to 0$ and therefore, by using again Ascoli-Arzel\`a's theorem, we conclude that $b_\delta\to b$ uniformly on $U$. Hence $b$ is Lipschitz on $U$. 
\end{proof}

For the next theorem we are going to use a different technical condition. 
\begin{description}
\item[(G) Bounds III. \namedlabel{G}{(G)}] There exists $c_1,c_2>0$ such that 
\begin{align}
\label{G1}&\partial_1(h+m)(t,x)\ge c_1\\
\label{G3}&\sum^d_{j=1}|\partial_j(h+m)(t,x)|+|\partial_t(h+m)(t,x)|\le c_2\,\left[1+\partial_1(h+m)(t,x)\right]
\end{align}
for $(t,x)\in[0,T]\times\reali^d$. 
Moreover, if $T<+\infty$, at least one of the two conditions below holds:
\begin{itemize}
\item[(a)] for $(t,x)\in[0,T]\times\reali^d$ it holds
\begin{align}
&|h(T,x)+n(x)|+2|\partial_t f(T,x)|\nonumber\\
&\hspace{+20pt}+\sum^d_{j=1}|\partial_j(g(x)-f(T,x))|\le c_2\,\left[1+\partial_1(g(x)-f(T,x))\right];
\end{align}
\item[(b)] for some $p\ge 1$ and for all $(t,x)\in[0,T]\times\reali^d$ it holds
\begin{align}
\label{G4}&\sum^d_{j=1}|\partial_j(g(x)-f(T,x))|\le c_2\,\left[1+\partial_1(g(x)-f(T,x))\right],\\
\label{polyn}&|h(T,x)+n(x)|+|\partial_tf(t,x)|\le c_2(1+\|x\|_d^p).
\end{align}
\end{itemize}
\end{description}

Now we can state the theorem and provide its proof.
\begin{theorem}[\textbf{Statement under \ref{G}}]\label{thm:G}
Assume that $d\ge2$ and conditions \ref{A$_1$}, \ref{A$_2$}, \ref{B}, \ref{C}, \ref{E} and \ref{G} hold. Let $\sum^d_{j=1}\|\nabla_x \mu_j(x)\|_d\le c$ hold true for all $x\in\reali^d$ and a given $c>0$ and, if $T<+\infty$ and only (b) of \ref{G} holds, assume also $\|\mu(x)\|_d\le c$ for $x\in\reali^d$. 

If there exists $(\hat{t},\hat{x}_2,\ldots \hat{x}_d)$ and an open bounded neighbourhood $U$ of the point such that \eqref{bdfinite1} holds, then $b$ is Lipschitz on $U$. 
\end{theorem}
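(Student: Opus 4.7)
My plan is to mirror the blueprint of Theorem \ref{thm:F}: apply Proposition \ref{prop:bdelta} to obtain explicit representations of $\nabla_y b_\delta$ and bounds for $\partial_t b_\delta$ on $U$, show that these bounds are uniform in $\delta$, and pass $\delta\downarrow 0$ via Ascoli--Arzel\`a. First I would observe that the uniform positivity $\partial_1(h+m)\ge c_1>0$ from \ref{G} implies condition \ref{C} and, together with the hypothesis $|b|<\infty$ on $U$, verifies \eqref{bdfinite1}--\eqref{bdfinite2} in Proposition \ref{prop:bdelta}. Hence the representations \eqref{lip00}--\eqref{lip01} are in force a.e.~on $U$ for every sufficiently small $\delta>0$, and the task reduces to bounding their right-hand sides by a constant independent of $\delta$.

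The new difficulty compared with Theorem \ref{thm:F} is that the gradient bound in \ref{G} is additive ($\sum_j|\partial_j(h+m)|+|\partial_t(h+m)|\le c_2[1+\partial_1(h+m)]$) rather than multiplicative. To absorb the constant ``$1$'' I would exploit the uniform lower bound $\partial_1(h+m)\ge c_1$ together with $\partial_1 X^{x,1}_s=\exp(\int_0^s\partial_1\mu_1\,du)\ge e^{-cT}$ (a consequence of \ref{B} and the global bound on $\nabla\mu_j$) to derive the probabilistic lower bound
\[
w^\circ_1(t,x)\ge c_1 e^{-cT}\,\media[\tau_*]=:c_3\,\media[\tau_*],\qquad (t,x)\in[0,T]\times\reali^d.
\]
Lemma \ref{lem:gradX} then provides a uniform $M$ with $\sup_{s,k,j,x\in U_b}|\partial_k X^{x,j}_s|\le M$. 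Combining these, the interior contribution to $|w^\circ_k|$ is dominated by
\[
Mc_2\,\media[\tau_*]+Mc_2 e^{cT}\,\media\Bigl[\int_0^{\tau_*}\partial_1(h+m)\,\partial_1 X^{x,1}_s\,ds\Bigr]\le C\,w^\circ_1(t,x),
\]
and the same splitting absorbs the interior contributions to $\overline w$ and $\underline w$.

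The terminal contributions split into the two sub-cases of \ref{G}. Under (a) the additive terminal bound is treated exactly as above, using $\partial_1(g-f(T,\cdot))\ge 0$ from \ref{A} and the Markov inequality $\prob(\tau_*=T-t)\le (T-t)^{-1}\media[\tau_*]$ to absorb the constant ``$1$''; this requires $\sup\{t:(t,y)\in U\}<T$, which may be assumed without loss of generality. Under (b) the treatment of $w^\circ_k$ is analogous since \eqref{G4} still supplies an additive bound for $\sum_j|\partial_j(g-f(T,\cdot))|$, but the terminal contributions to $\overline w,\underline w$ have only polynomial growth in $\|x\|_d$ and I would invoke Lemma \ref{lem:growth} (which is precisely why the extra assumption $\|\mu\|_d\le c$ is imposed in this case): since $|b_\delta|$ is bounded on $U$ (so $\|x\|_d$ is bounded on $U_b$) and $(T-t)^{-1}$ is bounded on $U$, the lemma yields
\[
\media\bigl[\mathds{1}_{\{\tau_*=T-t\}}\bigl(|h(T,X)+n(X)|+|\partial_tf(T,X)|\bigr)\bigr]\le C_U\,\media[\tau_*]\le (C_U/c_3)\,w^\circ_1(t,x).
\]

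Assembling these estimates in \eqref{lip00}--\eqref{lip01} gives uniform bounds $|\partial_t b_\delta|+\sum_k|\partial_k b_\delta|\le C$ a.e.~on $U$ for all $\delta\in(0,\delta_U]$. Exactly as in the conclusion of Theorem \ref{thm:F}, the pointwise monotone convergence $b_\delta\downarrow b$ and Ascoli--Arzel\`a then upgrade to uniform convergence on $U$ with the limit inheriting the Lipschitz constant, which completes the proof. The main obstacle is the terminal analysis in case (b): the polynomial growth of $h(T,\cdot)+n$ and $\partial_tf(T,\cdot)$ forces the use of Lemma \ref{lem:growth} together with the extra assumption $\|\mu\|_d\le c$, and this is precisely the reason the two sub-cases (a) and (b) appear separately in the statement.
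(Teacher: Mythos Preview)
Your proposal is correct and follows essentially the same route as the paper: both derive the key lower bound $w^\circ_1\ge c\,\media[\tau_*]$ from \eqref{G1}, bound $|w^\circ_k|$ and $\max\{|\underline w|,|\overline w|\}$ via Lemma \ref{lem:gradX}, the additive inequalities in \ref{G}, Markov's inequality (under (a)) and Lemma \ref{lem:growth} (under (b)), substitute into \eqref{lip00}--\eqref{lip01}, and pass to the limit via Ascoli--Arzel\`a. The only cosmetic difference is that the paper keeps the explicit dependence on $(T-t)^{-1}$ and $\|(b_\delta,y)\|_d^p$ in the final bounds before implicitly using that these are locally bounded on $U$, whereas you absorb this directly into the constant and make the restriction $\sup\{t:(t,y)\in U\}<T$ explicit.
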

\begin{proof}
We provide a full proof only for $T<+\infty$ but the same arguments hold for $T=+\infty$ up to simple minor changes.

The idea of the proof is the same as in the previous theorem.
Recalling \eqref{gradX}, the fact that $\partial_1\mu_j(x)=0$ for $j>1$ due to \ref{B}, and the bounds in \ref{G} it is not hard to verify that \eqref{eq:wcirc-k} gives
\begin{align}\label{gradw2}
|w^\circ_k(t,x)|\le &\beta_0\left(\media_{t,x}(\tau_*-t)+\prob_{t,x}(\tau_*=T) + w^\circ_1(t,x)\right)\\
\le&\beta_0(1+(T-t)^{-1})\media_{t,x}(\tau_*-t)+\beta_0 w^\circ_1(t,x)\nonumber
,\qquad (t,x)\in[0,T]\times\reali^d
\end{align}
for all $k=2,\ldots d$, and a suitable $\beta_0>0$ which is independent of $t,x$ and $k$. Notice that the second inequality is just an application of Markov inequality (see \eqref{eq:markov}).

For the bounds on $\underline w$ and $\overline w$ we treat separately the case in which condition (a) of \ref{G} holds and (b) of \ref{G} holds. Starting with the former and recalling \eqref{eq:wtup}--\eqref{eq:wtdwn} it is not hard to show that there exists a constant $\beta_1>0$ such that for all $(t,x)\in[0,T)\times\reali^d$
\begin{align}\label{wwt2}
\max&\{|\underline w|,|\overline w|\}(t,x)\nonumber\\
\le& \media_{t,x}\left[\int_t^{\tau_*}|\partial_t(h+m)(s,X_s)|ds+\mathds{1}_{\{\tau_*=T\}}\left|(h+\partial_t f)(T,X_{T})+n(X_T)\right|\right]\nonumber\\
\le& \beta_1\left(\media_{t,x}(\tau_*-t)+\prob_{t,x}(\tau_*=T) + w^\circ_1(t,x)\right)\nonumber\\
\le& \beta_1(1+(T-t)^{-1})\media_{t,x}(\tau_*-t)+\beta_1 w^\circ_1(t,x).
\end{align}
Under condition (b) instead we use Lemma \ref{lem:growth} to get 
\begin{align*}
\media_{t,x}\left[\mathds{1}_{\{\tau_*=T\}}\left|(h+\partial_t f)(T,X_{T})+n(X_T)\right|\right]
\le \beta'_1(1+\|x\|^p_d)(T-t)^{-1}\media_{t,x}(\tau_*-t)
\end{align*}
for some $\beta'_1>0$, while the estimate for the running cost $\partial_t(h+m)$ is the same as in \eqref{wwt2}. Therefore we can find again $\beta_1>0$ such that, for $(t,x)\in[0,T)\times\reali^d$  
\begin{align}\label{wwt3}
\max&\{|\underline w|,|\overline w|\}(t,x)\nonumber\\
\le& \beta_1\left[1+(1+\|x\|^p_d)(T-t)^{-1}\right]\media_{t,x}(\tau_*-t)+\beta_1w^\circ_1(t,x).
\end{align}

We notice that thanks to \ref{G} we also have
\begin{align*}
w^\circ_1(t,x)\ge c\,\media_{t,x}(\tau_*-t), \quad(t,x)\in[0,T]\times\reali^d.
\end{align*}

To show the Lipschitz property let $b_\delta$ be the $\delta$-level set of $w$ and let us find bounds for \eqref{lip00} and \eqref{lip01}. In particular for a.e.~$(t,y)\in[0,T)\times\reali^{d-1}$ we estimate
\begin{align}
|\partial_t b_\delta(t,y)|\le &\frac{\max\{|\underline w|,|\overline w|\}(t,b_\delta(t,y),y)}{w^\circ_1(t,b_\delta(t,y),y)}\nonumber \\
\le& \beta_1+\frac{\beta_1}{c}\left[1+(1+\|(b_\delta,y)\|^p_d)(T-t)^{-1}\right],
\end{align}
with $(b_\delta,y):=(b_\delta(t,y),y)$ for simplicity of notation, and 
\begin{align}
|\partial_k b_\delta(t,y)|=\frac{|w^\circ_k(t,b_\delta(t,y),y)|}{w^\circ_1(t,b_\delta(t,y),y)}\le \beta_0+\frac{\beta_0}{c}(1+(T-t)^{-1})
\end{align}
for $k=2,\ldots d$. The rest of the proof follows by letting $\delta\downarrow 0$ and using Ascoli-Arzel\`a's theorem as in the proof of Theorem \ref{thm:lip-d1}.
\end{proof}
\begin{remark}\label{rem:comp}
The last two theorems above work under weaker technical conditions on $f$, $g$ and $h$ than those imposed in \cite{SS91}. It is worth drawing a precise parallel between the two contributions and it is important to notice that some inequalities are reversed just because \cite{SS91} considers a problem of optimal stopping with minimisation of costs, and in which the stopping set lies above the continuation set.

As for the notation, \cite{SS91} takes a state-independent obstacle, i.e.~$f(t,x)=f(t)$, and a different ordering of the space coordinates. Indeed our $\partial_k(h+m)(t,x)$ should be associated to $h_{n,n+1-k}(t,x)$ in \cite{SS91} for $k=1,\ldots n$. Similarly our $\partial_k (g(x)-f(T,x))$ corresponds to $g_{n,n+1-k}(x)$ of \cite{SS91}.  The setting we adopted in Theorem \ref{thm:G}, with \ref{G} using the specifications in (b), is more general than the setting in \cite{SS91}; in particular conditions in (2.2) of \cite{SS91} imply our \eqref{G1}--\eqref{G3} and \eqref{G4}. The polynomial growth \eqref{polyn} is the same as in \cite{SS91} and the requirement $g_n(x)\le f(0)$ of \cite{SS91} corresponds to our $g(x)\ge f(T,x)$.

Finally we notice that results in \cite{SS91} are obtained for $\mu(x)\equiv 0$ and $\sigma=\text{diag}\,1$ in \eqref{X}.
\end{remark}

\section{Some examples and further extensions}\label{sec:examples}

Here we illustrate a couple of applications of our results to problems studied in the literature on stochastic control \cite{Ch-Haus09}, \cite{DeAFF}. The Lipschitz regularity of the free boundary in such problems is new and was not discovered in \cite{Ch-Haus09} and \cite{DeAFF}. In all the examples below it is not difficult to check that the standing assumptions \eqref{integr}, \eqref{integr2}, \eqref{grad-L2} and Assumption \ref{ass:fgh} hold.
\vspace{+5pt}

\noindent \textbf{Example 1}. 
Here we consider a problem of optimal stopping arising in connection with one of irreversible investment (see \cite{Ch-Haus09}), under a Cobb-Douglas type production function. The state space is $[0,T]\times\reali$ and the optimisation problem reads (see (3.15) and Section 4 in \cite{Ch-Haus09})
\begin{align}\label{P-CH}
v(t,x)=\sup_{0\le \tau\le T-t}\media\left[-\int_0^\tau e^{-rs-(1-\alpha)X^x_s}ds-c_1e^{-r\tau}\mathds{1}_{\{\tau<T-t\}}-c_2e^{-r\tau}\mathds{1}_{\{\tau=T-t\}}\right]
\end{align}
where $r>0$, $\alpha\in(0,1)$, $c_1\ge c_2\ge 0$ and 
\begin{align*}
X^x_t=x+\mu t+\sigma B_t, \qquad x\in\reali.
\end{align*}
Notice that, due to discounting, in this example we must replace the infinitesimal generator $\cL$ by $\cL-r$, which corresponds to the diffusion $X$ killed at the constant rate $r$. In this setting we have 
\begin{align*}
&h(x)=-e^{-(1-\alpha)x},\quad f(x)=-c_1,\quad g(x)=-c_2,\quad m(x)=rc_1,\quad n(x)=rc_2,\\
&\partial_t(h+m)=0,\quad \partial_x(h+m)(x)=(1-\alpha)e^{-(1-\alpha)x}.
\end{align*}
Here we want to use Theorem \ref{thm:lip-d1} and we start by noticing that conditions \ref{A$_1$}, \ref{A$_2$}, \ref{C} and \ref{D} hold. Moreover, the curve $\gamma$ (see \eqref{theta}) is simply given by 
\begin{align*}
\gamma(t)=\gamma=(1-\alpha)^{-1}\ln (1/rc_1),
\end{align*}
{\color{blue}so that $(i)$ of Theorem \ref{thm:lip-d1} holds. 
As for $(ii)$ it is immediate to check that for any $x_0>\gamma$ one has $\partial_x(h+m)\ge (1-\alpha)e^{-(1-\alpha)x_0}$ for $x\le x_0$.} It only remains to check the requirements of Corollary \ref{cor:bounds}. If $c_2=c_1$ then $(i)$ in the latter corollary holds and therefore we have
\begin{proposition}
For $c_1=c_2$ Theorem \ref{thm:lip-d1} is true for problem \eqref{P-CH}.
\end{proposition}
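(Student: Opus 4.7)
The plan is simply to verify each hypothesis of Theorem~\ref{thm:lip-d1} for the data of \eqref{P-CH} when $c_1=c_2$ and then invoke the theorem. As recalled just before the statement, the discount factor $e^{-rs}$ is absorbed by replacing the generator $\cL$ with $\cL-r$, so that $m(x)=rc_1$ and $n(x)=rc_2$ and the functions $f\equiv -c_1$, $g\equiv -c_2$, $h(x)=-e^{-(1-\alpha)x}$ can be treated as time-homogeneous; the standing integrability requirements \eqref{integr}, \eqref{integr2}, \eqref{grad-L2} and Assumption~\ref{ass:fgh} are straightforward in this Gaussian-plus-exponential setting and have already been flagged as trivial at the start of Section~\ref{sec:examples}.

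First I would dispose of the terminal and corollary conditions. Because $c_1=c_2$, one has $g(x)\equiv -c_1\equiv f(T,x)$, so condition \ref{A} holds with equality in both inequalities and case (i) of Corollary~\ref{cor:bounds} applies, which is exactly what Theorem~\ref{thm:lip-d1} requires on the terminal side. Condition \ref{C} follows from $\partial_x(h+m)(x)=(1-\alpha)e^{-(1-\alpha)x}>0$ together with $\partial_x m\equiv 0$, and condition \ref{D} from $\partial_t(h+m)\equiv 0$ and $\partial_t m\equiv 0$; in both cases the required $L^2$-bounds are trivial.

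Next I would fix an arbitrary $t_0\in[0,T)$ and choose an open interval $\cI=(t_1,t_2)\subset[0,T)$ containing $t_0$. Since the threshold curve is the constant $\gamma=(1-\alpha)^{-1}\ln(1/(rc_1))$, we have $\overline{\gamma}=\gamma<+\infty$, which is (ii). Finiteness $|b(t_0)|<+\infty$, required by (i), was established in \cite{Ch-Haus09}. For (iii) one picks any $R\in(\gamma,+\infty)$ and sets $\alpha_R:=(1-\alpha)e^{-(1-\alpha)R}>0$; on $\overline{\cI}\times(-\infty,R)$ the estimate $\partial_x(h+m)(x)\ge\alpha_R$ is immediate. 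The drift $\mu$ is a constant, so $\partial_x\mu\equiv 0>-1$, and $\sigma>0$ by assumption.

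With every hypothesis verified, Theorem~\ref{thm:lip-d1} yields Lipschitz continuity of $b$ on $[t_1,t_2-\eps]$ for each $\eps>0$, and since $t_0$ and $\cI$ were arbitrary this gives local Lipschitz continuity on all of $[0,T)$. I do not anticipate a genuine obstacle: the proof is really a verification exercise, the only mildly delicate point being the recognition that the hypothesis $c_1=c_2$ is precisely what matches case (i) of Corollary~\ref{cor:bounds} and thereby removes the potentially problematic boundary contribution at $t=T$ that would otherwise prevent a direct application of the theorem.
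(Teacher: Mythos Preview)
Your proposal is correct and follows essentially the same approach as the paper: both treat the proposition as a straightforward verification exercise, checking conditions \ref{A}, \ref{C}, \ref{D}, the hypotheses $(i)$--$(iii)$ of Theorem~\ref{thm:lip-d1}, and observing that $c_1=c_2$ gives $g\equiv f(T,\cdot)$ so that case $(i)$ of Corollary~\ref{cor:bounds} applies. The paper in fact presents this verification in the running text of Example~1 and states the proposition as its conclusion rather than giving a separate proof.
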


We notice that $(ii)$ in Corollary \ref{cor:bounds} is too strong in this setting and can never be verified. Moreover, in \cite{Ch-Haus09} they consider $c_1>0$ and $c_2=0$, so that the assumptions of Corollary \ref{cor:bounds} do not hold. However, as already mentioned, the key point in our method is the probabilistic representation for the bounds of $\partial_t v$ and $\partial_x v$. In particular, the explicit nature of problem \eqref{P-CH} allows us to refine \eqref{downvt} and this turns out to be sufficient to prove Lipschitz regularity of $b$. In what follows we achieve this task.

\begin{proposition}
If $c_1>0$ and $c_2=0$ then $b$ is Lipschitz on $[0,T)$.
\end{proposition}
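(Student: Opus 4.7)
The plan is to reproduce the architecture of the proof of Theorem \ref{thm:lip-d1} while bypassing the one step that fails here, namely the appeal to Corollary \ref{cor:bounds}: with $c_2=0<c_1$ we have neither $g=f(T,\cdot)$ nor $h(T,x)+n(x)\ge -\partial_t f(T,x)-c$, since $h(x)+n(x)=-e^{-(1-\alpha)x}$ is unbounded below. All the remaining hypotheses are verified: after absorbing the discount into the killing rate $r$, the data $h,f,g,m,n$ are time-independent, so $\partial_t h=\partial_t f=\partial_t(h+m)\equiv 0$ and \ref{D} is trivial; the curve $\gamma$ reduces to the constant $\gamma=(1-\alpha)^{-1}\ln(1/(rc_1))$; and $\partial_x(h+m)(x)=(1-\alpha)e^{-(1-\alpha)x}\ge (1-\alpha)e^{-(1-\alpha)\rho}$ on $(-\infty,\rho]$, so items $(ii)$--$(iii)$ of Theorem \ref{thm:lip-d1} hold for any $\rho>\gamma$. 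The finiteness of $b$ at each $t\in[0,T)$ and its local boundedness follow from \cite{Ch-Haus09}.

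The essential new ingredient is the following refined upper bound for $|\partial_t v|$, obtained directly from Theorem \ref{thm:lip} rather than from Corollary \ref{cor:bounds}: since $\partial_t h=\partial_t f\equiv 0$, the sandwich $\underline v\le \partial_t v\le \overline v$ collapses to
\begin{align*}
|\partial_t v(t,x)|\le \Psi(t,x):=\media_{t,x}\!\Big[\mathds{1}_{\{\tau_*=T-t\}}e^{-r(T-t)}e^{-(1-\alpha)X_{T-t}}\Big].
\end{align*}
The Laplace transform of the Gaussian $X_{T-t}$ is explicit, giving $\media_{t,x}\big[e^{-r(T-t)}e^{-(1-\alpha)X_{T-t}}\big]=e^{-(1-\alpha)x}e^{-\kappa(T-t)}$ with $\kappa=r+(1-\alpha)\mu-\tfrac{1}{2}(1-\alpha)^2\sigma^2$, so $\Psi(\cdot,\rho)$ is uniformly bounded on $[0,T]$ for any fixed $\rho$, and more generally $\Psi(t,x)$ stays bounded whenever $x$ is bounded below. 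In our argument $\Psi$ plays the role that $\varphi(t,x)=\media_{t,x}[\tau_*-t]$ played in Step 2 of Theorem \ref{thm:lip-d1}, and admits the analogous strong Markov decomposition at the exit time $\tau_\rho=\inf\{s:(t+s,X^{b_\delta(t)}_s)\notin (t_1,t_2-\eps/2)\times(-\infty,\rho)\}$.

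Fix $\eps>0$, $\cI_\eps=[t_1,t_2-\eps]\subset[0,T)$ containing the chosen $t_0$, and $\rho>\gamma$. The implicit function theorem yields $b_\delta'(t)=-\partial_t w(t,b_\delta(t))/\partial_x w(t,b_\delta(t))$; using $\partial_x f\equiv 0$ and hence $\partial_x w\ge \hat w$ as in \eqref{w2}, one reduces to bounding $\Psi(t,b_\delta(t))/\hat w(t,b_\delta(t))$. The strong Markov decomposition of $\Psi$ at $\tau_\rho$ together with $\sup_{s\in\cI}\Psi(s,\rho)<\infty$ and $\hat w_\rho:=\inf_{s\in\cI}\hat w(s,\rho)>0$ (the latter exactly as in \eqref{inf}) allow the same estimates as \eqref{lw1}--\eqref{lip4} to go through, delivering $|b_\delta'|\le K_\eps$ uniformly in $\delta$; Ascoli--Arzel\`{a} then gives the Lipschitz constant for $b$ on $\cI_\eps$. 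The main obstacle is the residual event $\{\tau_\rho\ge T-t,\tau_\delta=T-t\}$, on which the exponential weight $e^{-(1-\alpha)X_{T-t}}$ could blow up; this is controlled by combining the Gaussian bound $\media_{t,b_\delta(t)}[e^{-(1-\alpha)X_{T-t}}]=e^{-(1-\alpha)b_\delta(t)}e^{-\lambda(T-t)}$ (where $\lambda=(1-\alpha)\mu-\tfrac{1}{2}(1-\alpha)^2\sigma^2$) with the uniform lower bound on $b_\delta$ inherited from the local boundedness of $b$ on $\cI_\eps$. Arbitrariness of $\eps$ then gives local Lipschitz continuity on $[0,T)$.
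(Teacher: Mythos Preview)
Your strategy of replacing the unavailable Corollary~\ref{cor:bounds} with the direct bound $|\partial_t v|\le\Psi$ and then rerunning the strong Markov decomposition of Theorem~\ref{thm:lip-d1} has a genuine gap. The decomposition of $\Psi$ at $\tau_\rho$ gives
\[
\Psi(t,x_\delta)=\media_{t,x_\delta}\Big[\mathds{1}_{\{\tau_\rho<\tau_\delta\}}\Psi(t+\tau_\rho,X_{\tau_\rho})\Big],
\]
and on the event $\{t+\tau_\rho=t_2-\eps/2\}$ (exit through the time face of the localisation rectangle) the point $X_{\tau_\rho}$ ranges over all of $(-\infty,\rho)$, so $\Psi(t_2-\eps/2,X_{\tau_\rho})\sim e^{-(1-\alpha)X_{\tau_\rho}}$ is unbounded. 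Your proposed control, namely the Gaussian identity $\media_{t,x_\delta}[e^{-(1-\alpha)X_{T-t}}]=e^{-(1-\alpha)x_\delta}e^{-\lambda(T-t)}$, only produces an \emph{absolute} bound on $\Psi(t,x_\delta)$; it does not give a bound that scales with $\prob_{t,x_\delta}(\tau_\rho<\tau_\delta,\,t+\tau_\rho=t_2-\eps/2)$ or with $\media_{t,x_\delta}[\tau_\delta\wedge\tau_\rho-t]$. Hence the analogue of \eqref{lip4} fails, and the ratio $\Psi/\hat w$ is not shown to be bounded uniformly in $\delta$ (recall $\hat w(t,b_\delta(t))\to 0$ as $\delta\downarrow 0$). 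Incidentally, the ``residual event'' you name, $\{\tau_\rho\ge T-t,\tau_\delta=T-t\}$, is empty whenever $t_2-\eps/2<T$; the true obstacle is the exit through the time face of the localisation box.

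The paper takes a different and shorter route that avoids the strong Markov localisation altogether. It first shows $\partial_t v\ge 0$ directly by going back to the inequality preceding \eqref{vt0} and using that $v\le 0$ in this problem; this already gives $b'_\delta\le 0$. For the lower bound on $b'_\delta$ one must control $\overline v/\partial_x v$, and here the paper exploits the multiplicative structure of the exponential weight: writing $e^{-(1-\alpha)X^x_t}=\theta(t,x)\,\tfrac{d\widetilde\prob}{d\prob}\big|_{\cF_t}$ with a deterministic $\theta$ and a Girsanov density, both $\overline v$ and $\partial_x v$ become expectations under the \emph{same} measure $\widetilde\prob$, the exponential factors cancel, and one is left with $\theta(T-t,x)\widetilde\prob(\tau_*=T-t)$ over $(1-\alpha)\widetilde\media[\int_0^{\tau_*}e^{-rs}\theta(s,x)\,ds]$. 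Markov's inequality then closes the argument in one line. This change-of-measure cancellation is the missing idea in your proposal.
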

\begin{proof}
First notice that \eqref{grad-v} and \eqref{upvt} give
\begin{align}\label{eq:gab2}
\partial_x v(t,x)=(1-\alpha)\media\left[\int_0^{\tau_*}e^{-rs -(1-\alpha)X^x_s}ds\right]
\end{align}
and
\begin{align}\label{eq:gab3}
\overline v(t,x)=e^{-r(T-t)}\media\left[\mathds{1}_{\{\tau_*=T-t\}}e^{-(1-\alpha)X^x_{T-t}}\right].
\end{align}
For the lower bound of $\partial_t v$ we follow the proof of Theorem \ref{thm:lip} up to \eqref{vt0}, which now reads
\begin{align*}
v(t+\eps,x)-v(t,x)\ge& -\media\left[\mathds{1}_{\{\tau>T-t-\eps\}}\left(e^{-r\tau}v(t+\tau,X^x_\tau)+\int^\tau_{T-t-\eps}e^{-rs}h(X^x_s)ds\right)\right]\\[+4pt]
=&-\media\left[\mathds{1}_{\{\tau>T-t-\eps\}}e^{-r(T-t-\eps)}v(T-\eps,X^x_{T-t-\eps})\right]\ge 0
\end{align*}
since $v\le 0$ by \eqref{P-CH}.
Since $\partial_t v$ exists at all points of $\cC$, the above gives $\partial_t v \ge 0$.

The latter is useful in estimating $b'_\delta$ in \eqref{eq:gab1}. In fact we immediately obtain (noticing that here $\partial_t v=\partial_t w$ and $\partial_x v=\partial_x w$)
\begin{align}
0\ge b'_\delta(t)\ge- \frac{\overline v(t,b_\delta(t))}{\partial_x v(t,b_\delta(t))}.
\end{align}
To estimate the right-hand side of the above we observe that
\begin{align*}
e^{-(1-\alpha)X^x_t}=\frac{d \widetilde \prob}{d \prob}\Big|_{\cF_t}\,\vartheta(t,x)
\end{align*}
with
\begin{align*}
\frac{d \widetilde \prob}{d \prob}\Big|_{\cF_t}:=e^{(\alpha-1)\sigma B_t-\tfrac{1}{2}(\alpha-1)^2\sigma^2 t}\quad\text{and}\quad \vartheta(t,x):=e^{(\alpha-1)(x+\mu t+\tfrac{1}{2}(\alpha-1)\sigma^2 t)}.
\end{align*} 
Using the new probability measure in \eqref{eq:gab2} and \eqref{eq:gab3} we get
\begin{align}
0\ge b'_\delta(t)\ge -\frac{\vartheta(T-t,x)\pprob(\tau_*=T-t)}{(1-\alpha)\mmedia\left[\int_0^{\tau_*}e^{-rs}\vartheta(s,x)ds\right]}\ge -\frac{(T-t)^{-1}\vartheta(T-t,x)\mmedia[\tau_*]}{(1-\alpha)\underline \vartheta(t,x)\, \mmedia[\tau_*]}
\end{align}
by using Markov inequality in the numerator and setting $\underline \vartheta (t,x):=\inf_{0\le s\le T-t}e^{-rs}\theta(s,x)$.
Hence we conclude
\begin{align*}
0\ge b'_\delta(t)\ge  -\frac{(T-t)^{-1}\vartheta(T-t,x)}{(1-\alpha)\underline \vartheta(t,x)}
\end{align*}
\end{proof}
\vspace{+5pt}

\noindent \textbf{Example 2}.
Now we consider a multi-dimensional problem which arises in connection with irreversible investment with stochastic prices and demand (see \cite{DeAFF}). The state variables are $(t,x,y,z)\in[0,T]\times\reali^3$ and we treat separately the case $T<+\infty$ and $T=+\infty$. The problem is degenerate because there is no dynamics in the $z$ direction (however a dynamic with no Brownian part could be included without altering our analysis). This can also be seen as a family of problems depending on a parameter $z$. The \emph{most interesting feature} is that we can prove Lipschitz continuity of the boundary also with respect to the parameter $z$. 

The optimisation problem reads
\begin{align}\label{P-D}
v(t,x,y,z)=\sup_{0\le \tau\le T-t}\media\left[\int_0^{\tau}e^{-r s}(z-X^x_t)dt-e^{-r \tau}Y^y_\tau\right]
\end{align}
where $r>0$. It is worth mentioning that the running cost above corresponds to taking $c(x,z)=\tfrac{1}{2}(x-z)^2$ in the control problem studied by \cite{DeAFF}. As discussed in the latter paper, this choice is very natural in that context (see Remark 2.5 in \cite{DeAFF}). Notice also that the form of the payoff prevents a dimensionality reduction, hence also the infinite horizon case $T=+\infty$ is truly 2-dimensional and parameter dependent. 

A typical application of this model is for electricity generation in presence of renewable sources. Here $X$ is associated to a stochastic demand net of generation from renewables, and $Y$ to the stochastic spot price of electricity. The variable $Z$ is related to the level of production capacity from conventional generation. In this model both demand and price can take negative values, which is consistent with real market observations. We consider three different cases to illustrate our methodology in full.
\vspace{+4pt}

\noindent \emph{Example 2-(a)}. Let us start with a finite-horizon problem with simple dynamics. Let $T<+\infty$ and 
\begin{align}\label{dynamics0}
&X^x_t=x+\alpha t+\beta B_t \quad \text{and}\quad Y^y_t=y+\mu t+\sigma W_t
\end{align}
where $\alpha,\beta,\sigma,\mu$ are constants while $B$ and $W$ are (possibly correlated) Brownian motions. 

Again we are in presence of a killing at a rate $r$ and therefore we use $\cL-r$ instead of $\cL$. We have
\begin{align*}
&h(x,z)=z-x,\quad f(y)=g(y)=-y,\quad m(y)=n(y)=ry-\mu,\\
&\partial_t(h+m)=0,\quad \partial_x(h+m)(x,y,z)=-1,\quad \partial_z(h+m)(x,y,z)=1,\quad\partial_y(h+m)(x,y,z)=r.
\end{align*}
Considering $\partial_1=\partial_y$, it is immediate to check that Proposition \ref{b:existence} holds and we have 
\begin{align*}
\cS=\{(t,x,y,z)\,:\,y\le b(t,x,z)\}.
\end{align*}
Finiteness of the boundary was proved in \cite{DeAFF} for the infinite horizon case, and therefore holds as well for $T<+\infty$. Moreover all assumptions of Theorem \ref{thm:G} hold under condition (b) of \ref{G}. Hence we have
\begin{proposition}
For $T<+\infty$ and with the dynamics \eqref{dynamics0}, Theorem \ref{thm:G} is true for problem \eqref{P-D}.
\end{proposition}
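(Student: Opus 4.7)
The plan is to verify, one at a time, every hypothesis of Theorem~\ref{thm:G} under specification (b) for the model \eqref{P-D} with the linear Gaussian dynamics \eqref{dynamics0}, and then invoke the theorem directly. I take the price variable $y$ to play the role of $x_1$ (so $\partial_1=\partial_y$), with the remaining coordinates being $(x_2,x_3)=(x,z)$. Under this labelling condition~\ref{B} is immediate: both $X$ and $Z$ have drift independent of $y$, and $\partial_1Y^y_t\equiv 1$ because $\mu$ is constant.

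Using the discounted generator $\cL-r$ and $f(y)=g(y)=-y$, a direct computation gives $m(y)=n(y)=ry-\mu$, hence
\[
(h+m)(x,y,z)=(z-x)+ry-\mu,\qquad g(y)-f(T,y)\equiv 0,
\]
and all first-order derivatives of $h+m$ are constants: $\partial_y(h+m)=r$, $\partial_x(h+m)=-1$, $\partial_z(h+m)=1$, $\partial_t(h+m)=0$. Condition~\ref{A} is trivial since $g=f(T,\cdot)$; condition~\ref{C} reduces to $\partial_1(h+m)\equiv r>0$ plus a trivial $L^2$ bound on the constant $\partial_y m=r$; condition~\ref{E} is equally immediate. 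For~\ref{G}(b) I take $c_1=r$, which gives \eqref{G1}; the left-hand side of \eqref{G3} equals the constant $r+2$ and is controlled by $c_2(1+r)$ with $c_2=(r+2)/(r+1)$; \eqref{G4} is trivial because $g-f(T,\cdot)\equiv 0$; and \eqref{polyn} holds with $p=1$ since $h(T,\cdot)+n$ is affine in $(x,y,z)$ and $\partial_tf\equiv 0$. The extra drift bounds $\sum_j\|\nabla\mu_j\|_d\le c$ and $\|\mu\|_d\le c$ are automatic since $\mu$ is a constant vector.

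The only non-trivial point is local finiteness of $b$. Because the admissible class of stopping times is smaller for $T<+\infty$ than for $T=+\infty$, one has $v_T\le v_\infty$, so the finite-horizon stopping set contains the infinite-horizon one in the $y$-direction, giving $b_T(t,x,z)\ge b_\infty(x,z)$; combined with the finiteness proved in \cite{DeAFF} for $T=+\infty$ this yields a local lower bound. For an upper bound at each $t<T$, take the admissible rule $\tau=\eps>0$: an elementary computation yields $v-f\ge \int_0^\eps e^{-rs}(z-x-\alpha s)\,ds+(1-e^{-r\eps})y-e^{-r\eps}\mu\eps$, which diverges to $+\infty$ as $y\to+\infty$, so $b_T(t,x,z)<+\infty$ locally. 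This finiteness step is the only potential obstacle; once it is in place, every remaining hypothesis of Theorem~\ref{thm:G} is immediate bookkeeping and the theorem delivers Lipschitz regularity of $b$ on the chosen neighbourhood.
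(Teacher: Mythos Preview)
Your proposal is correct and follows essentially the same route as the paper: identify $\partial_1=\partial_y$, verify \ref{A}--\ref{E} and \ref{G}(b) by direct computation of $h+m$, $g-f(T,\cdot)$ and their derivatives, check the drift bounds, and cite finiteness of $b$ before invoking Theorem~\ref{thm:G}. The only substantive difference is that for the finiteness of $b$ the paper simply appeals to \cite{DeAFF} (arguing that the infinite-horizon result carries over), whereas you supply an explicit two-sided argument---the comparison $v_T\le v_\infty$ for the lower bound and the deterministic time $\tau=\eps$ for the upper bound---which is a welcome clarification since the inclusion $\cS_T\supseteq\cS_\infty$ by itself only yields $b_T\ge b_\infty$ and does not directly give $b_T<+\infty$.
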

\vspace{+5pt}

\noindent \emph{Example 2-(b)}. Let $T=+\infty$ and the dynamic of $(X,Y)$ be
\begin{align}\label{dynamics1}
&X^x_t=x+\alpha\int_0^t(\zeta-X^x_s)ds+\beta B_t \quad \text{and}\quad Y^y_t=y+\mu t+\sigma W_t
\end{align}
where $\alpha,\beta,\sigma,\mu,\zeta$ are constants while $B$ and $W$ are (possibly correlated) Brownian motions. In this setting we assume a mean reverting dynamic for the demand. For the finiteness of \eqref{P-D} and to guarantee \eqref{integr2} we pick $r>\alpha$. We have
\begin{align*}
&h(x,z)=z-x,\quad f(y)=g(y)=-y,\quad m(y)=n(y)=ry-\mu,\\
&\partial_t(h+m)=0,\quad \partial_x(h+m)(x,y,z)=-1,\quad \partial_z(h+m)(x,y,z)=1,\quad\partial_y(h+m)(x,y,z)=r.
\end{align*}
As in case (a) above, taking $\partial_1=\partial_y$, we see immediately that Proposition \ref{b:existence} holds and we have 
\begin{align}\label{ex:S}
\cS=\{(x,y,z)\,:\,y\le b(x,z)\}.
\end{align} 
Now Theorems \ref{thm:F} and \ref{thm:G} hold, because both conditions \ref{F} and \ref{G} hold due to infinite horizon (hence the unbounded drift for $X$ is admissible). So we have 
\begin{proposition}
For $T=+\infty$ and with the dynamics \eqref{dynamics1}, Theorems \ref{thm:F} and \ref{thm:G} are true for problem \eqref{P-D}.
\end{proposition}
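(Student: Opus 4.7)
The strategy is to check that every hypothesis of Theorems~\ref{thm:F} and~\ref{thm:G} is met by the data of problem~\eqref{P-D} under the dynamics~\eqref{dynamics1} with $T=+\infty$, and then invoke those theorems on any neighbourhood $U$ of a point at which $|b|<+\infty$. The only substantive preparatory move is the relabelling $x_1:=y$, $x_2:=x$, $x_3:=z$ forced by Proposition~\ref{b:existence} and by the description~\eqref{ex:S} of the stopping set; the standing hypotheses \eqref{integr}, \eqref{integr2}, \eqref{grad-L2} and Assumption~\ref{ass:fgh} have already been granted in the preamble of Section~\ref{sec:examples}.

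First I would dispatch the structural conditions \ref{A}, \ref{B}, \ref{C}, \ref{E}. Condition~\ref{A} is vacuous for $T=+\infty$. Condition~\ref{B} holds because the drift of $x_1=y$ is the constant $\mu$, the drift of $x_2=x$ is $\alpha(\zeta-x_2)$ and depends on $x_2$ only, and the drift of $x_3=z$ is zero; in particular no $\mu_k$ with $k\ge 2$ depends on $x_1$, and $\partial_1 X^{x,1}_t\equiv 1$. Conditions~\ref{C} and~\ref{E} reduce to integrability of the derivatives of $m$ (all of which are constants here) together with the sign inequality $\partial_1(h+m)=r>0$.

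Next I would verify the quantitative bounds. With $\partial_t(h+m)=0$, $|\partial_x(h+m)|=1$, $|\partial_y(h+m)|=r$ and $|\partial_z(h+m)|=1$ one has
\begin{equation*}
\sum_{j=1}^{3}|\partial_j(h+m)|+|\partial_t(h+m)|=r+2=\frac{r+2}{r}\,\partial_1(h+m),
\end{equation*}
so that \ref{F} holds with $c=(r+2)/r$ and \ref{G} holds with $c_1=r$ and $c_2=r+2$. Since $T=+\infty$, the clauses of \ref{F}, \ref{G} and Corollary~\ref{cor:bounds} that involve terminal data are vacuous. The gradient bound on the drift is immediate: $\sum_{j=1}^{3}\|\nabla_x\mu_j(x)\|_3=\alpha$ for every $x$, which suffices for both theorems (the additional bound $\|\mu\|\le c$ is not required in the infinite-horizon case).

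The only non-elementary ingredient is finiteness of $b$: one needs a neighbourhood $U$ of the given point on which $|b(x,z)|<+\infty$. This is the genuine obstacle and it is not re-proved here; it is imported from \cite{DeAFF}, where the boundary is shown to be real-valued throughout the plane in the infinite-horizon formulation that includes the present data. Once $U$ is produced, all hypotheses of Theorems~\ref{thm:F} and~\ref{thm:G} are in force on $U$ and the proposition follows. Thus the proof is almost entirely a bookkeeping exercise after the correct re-ordering of the coordinates, with the external input from \cite{DeAFF} being the only delicate step.
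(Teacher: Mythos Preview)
Your proposal is correct and follows exactly the route the paper takes: the paper's argument for this proposition is essentially the single sentence ``both conditions \ref{F} and \ref{G} hold due to infinite horizon (hence the unbounded drift for $X$ is admissible)'', together with the observation that Proposition~\ref{b:existence} applies with $\partial_1=\partial_y$. You have simply written out the verification of \ref{A}--\ref{E}, \ref{F}, \ref{G} and the drift bound in full, and imported finiteness of $b$ from \cite{DeAFF}, which is precisely what the paper leaves implicit. One very minor remark: for Theorem~\ref{thm:F} you also need \eqref{bdfinite2}, but this is immediate from your computation $\partial_1(h+m)=r>0$.
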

\vspace{+5pt}

\noindent \emph{Example 2-(c)}. Here we want to consider non-negative prices and this will require to adapt our methods as we did in Example 1. Let $T=+\infty$ and the dynamic of $(X,Y)$ be
\begin{align}\label{dynamics2}
&X^x_t=x+\alpha\int_0^t(\zeta-X^x_s)ds+\beta B_t \quad \text{and}\quad Y^y_t=y+\mu\int_0^t Y^y_s ds+\sigma\int_0^tY^y_s dW_s,
\end{align}
where $\alpha,\beta,\sigma,\mu,\zeta$ are constants while $B$ and $W$ are independent Brownian motions. We take $r>\alpha\vee \mu$ to guarantee finiteness of the value function and \eqref{integr2}.

To fit in the framework of \eqref{X} we must consider the new state variable $\pi:=\ln y$ so that a new process $\Pi_t$ can be defined as
\begin{align*}
Y^y_t=\exp\{\pi+(\mu-\tfrac{\sigma^2}{2})t+\sigma W_t\}=:\exp\{\Pi^\pi_t\}
\end{align*}
In the state variables $(x,\pi,z)$ we have
\begin{align*}
&h(x,z)=z-x,\quad f(\pi)=g(\pi)=-e^\pi,\quad m(\pi)=n(\pi)=(r-\mu)e^\pi,\quad \partial_t(h+m)=0,\\
&\partial_x(h+m)(x,\pi,z)=-1,\quad \partial_z(h+m)(x,\pi,z)=1,\quad\partial_\pi(h+m)(x,\pi,z)=(r-\mu)e^\pi.
\end{align*}
Now the stopping set reads $\cS	=\{(x,\pi,z):\pi\le b(x,z)\}$ and the boundary is finite. It was shown in \cite{DeAFF} that $w:=v+e^\pi$ is differentiable in $x$ and $\pi$ inside the continuation region. Moreover, with a slight abuse of notation, we also define $\partial_z w:=w^\circ_3$ (see \eqref{eq:wcirc-k}).
\begin{proposition}
For $T=+\infty$ and with the dynamics \eqref{dynamics2} we have $b$ Lipschitz on $\reali^2$.
\end{proposition}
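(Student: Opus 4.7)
The plan is to follow the strategy of Example 1 and exploit the explicit form of the data to sharpen Proposition \ref{prop:bdelta}. Since $T=+\infty$ and $h,f,g,\mu$ are all time-independent, the problem is time-homogeneous and $w:=v+e^\theta$ depends only on $(\theta,x,z)$; moreover $\partial_t(h+m)\equiv 0$ together with \eqref{integr2} force $\overline w\equiv \underline w\equiv 0$ in \eqref{eq:wtup}--\eqref{eq:wtdwn}, so $\partial_t b_\delta\equiv 0$. From Theorem \ref{thm:lip} (adapted to the killed diffusion $\cL-r$) and the explicit computations $\partial_\theta\Theta^\theta_s\equiv 1$, $\partial_x X^x_s=e^{-\alpha s}$, and the trivial dynamics in $z$, one reads off
\begin{align*}
\partial_\theta w(\theta,x,z)&= (r-\mu)\,\media\left[\int_0^{\tau_*} e^{-rs}\, e^{\Theta^\theta_s}\,ds\right], \\
\partial_x w(\theta,x,z)&=-\,\media\left[\int_0^{\tau_*} e^{-(r+\alpha)s}\,ds\right], \\
\partial_z w(\theta,x,z)&=\;\;\,\media\left[\int_0^{\tau_*} e^{-rs}\,ds\right].
\end{align*}

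The next step is to strip the exponential weight $e^{\Theta^\theta_s}$ in $\partial_\theta w$ via a Girsanov change of measure, as in Example 1. Let $\pprob$ be defined by $d\pprob/d\prob|_{\cF_s}:=\exp(\sigma W_s-\tfrac12\sigma^2 s)$, so that $\tilde W_s:=W_s-\sigma s$ is a $\pprob$-Brownian motion. Using the identity $e^{-rs}e^{\Theta^\theta_s}=e^\theta e^{-(r-\mu)s}\tfrac{d\pprob}{d\prob}|_{\cF_s}$ one obtains
\[
\partial_\theta w(\theta,x,z)=(r-\mu)\,e^\theta\int_0^{+\infty} e^{-(r-\mu)s}\,\pprob(\tau_*>s)\,ds.
\]
Because $B$ and $W$ are independent, the law of $X$ is the same under $\pprob$ and $\prob$, whereas $\Theta$ acquires an additional positive drift $\sigma^2$ under $\pprob$. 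A pathwise coupling on the same Brownian drivers then yields the stochastic comparison $\pprob(\tau_*>s)\ge\prob(\tau_*>s)$ for every $s\ge 0$, since $\tau_*$ is the first down-crossing of $\Theta$ below the random curve $b(X_\cdot,z)$ and is therefore monotone non-decreasing in $\Theta$.

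Plugging the formulas into Proposition \ref{prop:bdelta} (whose local hypotheses hold because $b$ is everywhere finite and $\partial_\theta(h+m)=(r-\mu)e^\theta>0$ in any bounded $\theta$-strip) gives, upon using $r-\mu\le r+\alpha$ and the stochastic comparison above,
\[
|\partial_x b_\delta(x,z)|=\frac{\int_0^{+\infty} e^{-(r+\alpha)s}\,\prob(\tau_*>s)\,ds}{(r-\mu)\,e^{b_\delta(x,z)}\int_0^{+\infty} e^{-(r-\mu)s}\,\pprob(\tau_*>s)\,ds}\;\le\;\frac{1}{(r-\mu)\,e^{b_\delta(x,z)}},
\]
and an analogous bound $|\partial_z b_\delta(x,z)|\le C/[(r-\mu)e^{b_\delta(x,z)}]$ (when $\mu\ge 0$ one may take $C=1$; the case $\mu<0$ requires a short auxiliary comparison of $\int_0^\infty e^{-rs}\pprob(\tau_*>s)ds$ with $\int_0^\infty e^{-(r-\mu)s}\pprob(\tau_*>s)ds$ using $r>\mu\vee\alpha$).

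To conclude, fix a compact $K\subset\reali^2$. By the finiteness of $b$ asserted above the statement, combined with the continuity of $b_\delta$ (Proposition \ref{prop:bdelta}) and the monotone limit $b_\delta\downarrow b$, the family $\{b_\delta\}_{\delta\in(0,\delta_K]}$ is uniformly bounded on $K$; hence $\inf_K e^{b_\delta}$ is bounded below uniformly in $\delta$, giving uniform bounds on $|\partial_x b_\delta|+|\partial_z b_\delta|$ throughout $K$. Ascoli--Arzel\`a combined with the pointwise convergence $b_\delta\to b$ then yields the Lipschitz property of $b$ on $K$, and since $K$ is arbitrary, $b$ is locally Lipschitz on $\reali^2$. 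The principal technical obstacle will be the stochastic comparison $\pprob(\tau_*>s)\ge\prob(\tau_*>s)$: making this rigorous requires a pathwise coupling of $\Theta$ under the two measures and a careful verification that $\tau_*$, defined via the state-dependent threshold $b(X_s,z)$, inherits the desired monotonicity in $\Theta$---a point where the independence of $B$ and $W$ is essential.
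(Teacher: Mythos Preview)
Your approach is essentially the same as the paper's: compute $\partial_\theta w$, $\partial_x w$, $\partial_z w$ from the representation formulas, apply the Girsanov change of measure $d\pprob/d\prob|_{\cF_s}=e^{\sigma W_s-\sigma^2 s/2}$ to strip the factor $e^{\Theta_s}$ in $\partial_\theta w$, use a pathwise coupling (exploiting independence of $B$ and $W$) to obtain the stochastic comparison $\pprob(\tau_*>s)\ge\prob(\tau_*>s)$, and then bound the ratios in Proposition~\ref{prop:bdelta} uniformly in $\delta$ before passing to the limit via Ascoli--Arzel\`a.

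One point of difference worth noting: you retain the prefactor $e^\theta$ in $\partial_\theta w=(r-\mu)e^\theta\,\mmedia[\int_0^{\tau_*}e^{-(r-\mu)s}ds]$, which the paper's displayed formula omits; as a consequence your bounds read $|\partial_x b_\delta|,|\partial_z b_\delta|\le C/[(r-\mu)e^{b_\delta}]$ and you conclude \emph{local} Lipschitz continuity, whereas the paper states a global bound $\le 1/(r-\mu)$. Your version is the more careful one, and the conclusion (local Lipschitz on $\reali^2$) is all that the argument actually delivers without further information on a uniform lower bound for $b$. Your remark about the case $\mu<0$ for $\partial_z b_\delta$ is also a fair observation; the paper's inequality $\media[\int_0^{\tau_*}e^{-rt}dt]\le\media[\int_0^{\tau_*}e^{-(r-\mu)t}dt]$ tacitly uses $\mu\ge 0$.
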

\begin{proof}
For $(x,\pi,z)\in\reali^3$ let $\tau_*=\tau_*(x,\pi,z)$ for simplicity.
From \eqref{grad-v} and \eqref{eq:wcirc-k} we get 
\begin{align}
\label{tt00}&\partial_x w(x,\pi,z)=-\media\left[\int_0^{\tau_*}e^{-(r+\alpha)t}dt\right],\quad \partial_\pi w(x,\pi,z)=(r-\mu)\mmedia\left[\int_0^{\tau_*}e^{-(r-\mu)t}dt\right]\\
\label{tt01}&\partial_z w(x,\pi,z)=\media\left[\int_0^{\tau_*}e^{-r t}dt\right]
\end{align}
where the measure $\pprob$ is defined by
\begin{align}
\frac{d\,\pprob}{d\,\prob}\Big|_{\cF_t}:=e^{\sigma W_t-\tfrac{\sigma^2}{2}t},\qquad t\ge 0.
\end{align}
The crucial observation now is that, while the dynamic of $X$ is unaffected by the change of measure (due to the independence of $B$ and $W$) the dynamic of $\Pi$ under $\pprob$ becomes 
\begin{align*}
\Pi_t^\pi=\pi+(\mu+\tfrac{\sigma^2}{2})t+\sigma\widetilde W_t
\end{align*}
where $\widetilde W_t=W_t-\sigma t$ is the new Brownian motion under $\pprob$. 

Now, if on $(\Omega,\cF,\prob)$ we define
\begin{align*}
\widetilde \Pi^\pi_t=\pi+(\mu+\tfrac{\sigma^2}{2})t+\sigma W_s
\end{align*}
and $\widetilde\tau_*:=\inf\{t\ge0\,:\, \widetilde \Pi^\pi_t\le b(X^x_t,z)\}$,
then we immediately see that 
\begin{align*}
\mathsf{Law}(X^x,\Pi^\pi|\pprob)=\mathsf{Law}(X^x,\widetilde \Pi^\pi|\prob)\quad\text{and}\quad\mathsf{Law} (\tau_*|\pprob)=\mathsf{Law} (\widetilde \tau_*|\prob).
\end{align*} 
Moreover by comparison principles for SDEs we have that $\widetilde \Pi^\pi_t\ge \Pi^\pi_t$, $\prob$-a.s.~for all $t\ge0$ and therefore we get $\widetilde \tau_*\ge \tau_*$, $\prob$-a.s., and 
\begin{align}\label{tt02}
\mmedia\left[\int_0^{\tau_*}e^{-(r-\mu)t}dt\right]=\media\left[\int_0^{\widetilde \tau_*}e^{-(r-\mu)t}dt\right]\ge \media\left[\int_0^{\tau_*}e^{-(r-\mu)t}dt\right]
\end{align}

Using \eqref{tt00}--\eqref{tt02} and setting $\tau_*=\tau_*(x,b_\delta(x,z),z)$ for simplicity, we find a uniform bound for $\nabla b_\delta$ for any $\delta>0$, that is
\begin{align*}
&|\partial_z b_\delta(x,z)|=\left|\frac{\partial_z w (x,b_\delta(x,z),z)}{\partial_\pi w(x,b_\delta(x,z),z)}\right|\le (r-\mu)^{-1}\frac{\media\left[\int_0^{\tau_*}e^{-r t}dt\right]}{\media\left[\int_0^{\tau_*}e^{-(r-\mu)t}dt\right]}\le \frac{1}{r-\mu}\\[+4pt]
&|\partial_x b_\delta(x,z)|=\left|\frac{\partial_x w (x,b_\delta(x,z),z)}{\partial_\pi w(x,b_\delta(x,z),z)}\right|\le (r-\mu)^{-1}\frac{\media\left[\int_0^{\tau_*}e^{-(r+\alpha) t}dt\right]}{\media\left[\int_0^{\tau_*}e^{-(r-\mu)t}dt\right]}\le \frac{1}{r-\mu}.
\end{align*}
Thus taking $\delta\to0$ we find that $b$ is Lipschitz as claimed.

\end{proof}
\vspace{+7pt}

\noindent\textbf{Acknowledgements}:\\
This work was financially supported by Sapienza University of Rome, research project ``\emph{Modelli stocastici a tempo continuo per le scelte previdenziali}'', grant no.~C26A14HXBR. T.~De Angelis was also partially supported by EPSRC grant EP/R021201/1. We thank G.~Peskir for helpful comments about general theory of optimal stopping. Finally, we thank the Associate Editor and an anonymous Referee for pertinent comments that helped improve the exposition of the paper.

\end{document}